\DeclareMathAlphabet{\mathpzc}{OT1}{pzc}{m}{it}
\numberwithin{equation}{section}
\begin{document}

\theoremstyle{plain}

\newtheorem{theorem}{Theorem}[section]
\newtheorem{lemma}[theorem]{Lemma}
\newtheorem{example}[theorem]{Example}
\newtheorem{proposition}[theorem]{Proposition}
\newtheorem{corollary}[theorem]{Corollary}
\newtheorem{definition}[theorem]{Definition}
\newtheorem{Ass}[theorem]{Assumption}
\newtheorem{condition}[theorem]{Condition}
\theoremstyle{definition}
\newtheorem{remark}[theorem]{Remark}
\newtheorem{SA}[theorem]{Standing Assumption}
\newtheorem*{discussion}{Discussion}
\newtheorem{remarks}[theorem]{Remark}
\newtheorem*{notation}{Remark on Notation}
\newtheorem*{SoI}{Sketch of Idea}

\newcommand{\of}{[\hspace{-0.06cm}[}
\newcommand{\gs}{]\hspace{-0.06cm}]}

\newcommand\llambda{{\mathchoice
		{\lambda\mkern-4.5mu{\raisebox{.4ex}{\scriptsize$\setminus$}}}
		{\lambda\mkern-4.83mu{\raisebox{.4ex}{\scriptsize$\setminus$}}}
		{\lambda\mkern-4.5mu{\raisebox{.2ex}{\footnotesize$\scriptscriptstyle\setminus$}}}
		{\lambda\mkern-5.0mu{\raisebox{.2ex}{\tiny$\scriptscriptstyle\setminus$}}}}}

\newcommand{\1}{\mathds{1}}

\newcommand{\F}{\mathbf{F}}
\newcommand{\G}{\mathbf{G}}

\newcommand{\B}{\mathbf{B}}

\newcommand{\M}{\mathcal{M}}

\newcommand{\la}{\langle}
\newcommand{\ra}{\rangle}

\newcommand{\lle}{\langle\hspace{-0.085cm}\langle}
\newcommand{\rre}{\rangle\hspace{-0.085cm}\rangle}
\newcommand{\blle}{\Big\langle\hspace{-0.155cm}\Big\langle}
\newcommand{\brre}{\Big\rangle\hspace{-0.155cm}\Big\rangle}


\newcommand{\tr}{\operatorname{tr}}
\newcommand{\N}{{\mathcal{N}}}
\newcommand{\cadlag}{c\`adl\`ag }
\newcommand{\on}{\operatorname}
\newcommand{\oP}{\overline{P}}
\newcommand{\oO}{\mathcal{O}}
\newcommand{\D}{D(\mathbb{R}_+; \mathbb{R})}
\newcommand{\bx}{\mathsf{x}}
\newcommand{\z}{\mathfrak{z}}
\newcommand{\bb}{\hat{b}}
\newcommand{\bs}{\hat{\sigma}}
\newcommand{\bv}{\hat{v}}
\renewcommand{\v}{\mathfrak{m}}
\newcommand{\ob}{\widehat{b}}
\newcommand{\os}{\widehat{\sigma}}
\newcommand{\us}{\bar{\sigma}}
\renewcommand{\j}{\varkappa}
\newcommand{\x}{\mathscr{X}}
\newcommand{\MM}{\mathscr{M}}
\newcommand{\blv}{\mathop{\vphantom{\int}}\nolimits_\V \hspace{-0.1cm}}
\newcommand{\bly}{\mathop{\vphantom{\int}}\nolimits_{\Y^*}\hspace{-0.1cm}}
\newcommand{\Z}{\mathscr{S}}
\newcommand{\tC}{\mathscr{C}^2_c}
\newcommand{\tT}{\mathscr{T}}
\newcommand{\test}{\zeta}

\renewcommand{\H}{\mathbb{H}}
\newcommand{\V}{\mathbb{V}}
\newcommand{\X}{\mathbb{X}}
\newcommand{\U}{\mathbb{U}}
\newcommand{\Y}{\mathbb{Y}}

\renewcommand{\epsilon}{\varepsilon}

\newcommand{\fPs}{\fP_{\textup{sem}}}
\newcommand{\fPas}{\fP^{\textup{ac}}_{\textup{sem}}}
\newcommand{\rrarrow}{\twoheadrightarrow}
\newcommand{\cA}{\mathcal{A}}
\newcommand{\ocA}{\mathcal{U}}
\newcommand{\cR}{\mathcal{R}}
\newcommand{\cK}{\mathcal{K}}
\newcommand{\cQ}{\mathcal{Q}}
\newcommand{\cF}{\mathcal{F}}
\newcommand{\cE}{\mathcal{E}}
\newcommand{\cC}{\mathcal{C}}
\newcommand{\cD}{\mathcal{D}}
\newcommand{\bC}{\mathbb{C}}
\newcommand{\cH}{\mathcal{H}}
\newcommand{\bth}{\overset{\leftarrow}\theta}
\renewcommand{\th}{\theta}
\newcommand{\cG}{\mathcal{G}}
\newcommand{\bbB}{\mathbb{B}}
\newcommand{\oX}{X} 

\newcommand{\bR}{\mathbb{R}}
\newcommand{\nnabla}{\nabla}
\newcommand{\f}{\xi} 
\newcommand{\g}{\mathfrak{g}}
\newcommand{\oconv}{\overline{\on{co}}\hspace{0.075cm}}
\renewcommand{\a}{\mathfrak{a}}
\renewcommand{\b}{\mathfrak{b}}
\renewcommand{\d}{m_\Omega} 
\renewcommand{\r}{m_\m}
\newcommand{\bS}{\mathbb{S}^d_+}
\newcommand{\p}{\mathsf{p}} 
\newcommand{\dr}{r} 
\newcommand{\m}{\mathbb{M}}
\newcommand{\Q}{Q}
\newcommand{\n}{\overline{\nu}} 
\newcommand{\usc}{\textit{USC}}
\newcommand{\lsc}{\textit{LSC}}
\newcommand{\q}{\mathfrak{q}}
\newcommand{\W}{\mathcal{P}}
\newcommand{\fP}{\mathcal{P}}
\newcommand{\w}{\mathsf{w}}
\newcommand{\oM}{\mathsf{M}}
\newcommand{\oZ}{\mathsf{Z}}
\newcommand{\oK}{\mathsf{K}}
\renewcommand{\o}{{q}}
\newcommand{\wV}{V_\textup{weak}}
\newcommand{\I}{\varrho}
\newcommand{\lV}{\langle}
\newcommand{\rV}{\rangle_V}
\renewcommand{\B}{\mathscr{B}}
\newcommand{\qq}{\mathsf{q}}
\newcommand{\al}{\varrho}
\newcommand{\gG}{\mathsf{G}}
\newcommand{\J}{\mathscr{J}}
\newcommand{\NN}{\mathscr{G}}
\renewcommand{\Upsilon}{\overline{\Theta}}

\renewcommand{\emptyset}{\varnothing}

\allowdisplaybreaks

\makeatletter
\@namedef{subjclassname@2020}{%
	\textup{2020} Mathematics Subject Classification}
\makeatother

 \title[Limit theory for controlled McKean--Vlasov SPDEs]{A limit theory for controlled \\McKean--Vlasov SPDE{\small s}} 
\author[D. Criens]{David Criens}
\address{Albert-Ludwigs University of Freiburg, Ernst-Zermelo-Str. 1, 79104 Freiburg, Germany}
\email{david.criens@stochastik.uni-freiburg.de}

\keywords{ 
mean field control; propagation of chaos; McKean--Vlasov limits; stochastic PDEs; variational approach; interacting diffusions; stochastic optimal control; relaxed controls; martingale solutions; stochastic porous media equations}

\subjclass[2020]{35Q35 -- 35R60 -- 49N80 -- 60F17 -- 60H15 -- 60K35 -- 82C22 -- 93E20}

\thanks{The author is very grateful to the referee for many helpful remarks and suggestions.}
\date{\today}

\maketitle

\begin{abstract}
We develop a limit theory for controlled mean field stochastic partial differential equations in a variational framework. More precisely, we prove existence results for mean field limits and particle approximations, and we establish a set-valued propagation of chaos result which shows that sets of empirical distributions converge to sets of mean field limits in the Hausdorff metric topology. 
Further, we discuss limit theorems related to stochastic optimal control theory. 
To illustrate our findings, we apply them to a controlled interacting particle system of stochastic porous media equations. 
\end{abstract}

\section{Introduction}
	The area of controlled McKean--Vlasov dynamics, also known as mean field control, has rapidly developed in the past years, see, e.g., the monograph \cite{car_della} and the references therein.
	There is also increasing interest in infinite dimensional systems such as controlled McKean--Vlasov stochastic PDEs (mean field SPDEs).
	For controlled mean field SPDEs within the semigroup approach of Da Prato and Zabczyk \cite{DaPratoEd1}, well-posedness of the state equation, the dynamic programming principle and a Bellman equation were proved in the recent paper \cite{CGKPR23}, where also comments on related literature can be found.

	Mean field dynamics are typically motivated by particle approximations (related to propagation of chaos), see, for instance, Sznitman's seminal monograph \cite{SnzPoC}. It is an important task to make this motivation rigorous.
	For finite dimensional controlled systems, a general limit theory was developed in the paper \cite{LakSIAM17} and extended in \cite{DPT22} to a setup with common noise.  
	The purpose of the present paper is to establish a limit theory for controlled interacting infinite dimensional SPDEs in the variational framework initiated by Pardoux \cite{par75} and Krylov--Rozovskii \cite{krylov_rozovskii}, see also the more recent monographs \cite{liu_rockner, par}. 
	
	To explain our contribution in more detail, consider a particle system \(Y = (Y^1, \dots, Y^n)\) of the form 
	\begin{align*}
		d Y^k_t &= \int b (f, t, Y^k_t, \x_n (Y_t)) \, \v^k(t, df) \, dt + \us (\v^k (t, \cdot\,), t, Y^k_t, \x_n (Y_t)) \, d W^k_t, 
	\end{align*}
with initial values \(Y^k_0 = x\), where 
	\begin{align*}
		\x_n (Y_t) &= \frac{1}{n} \sum_{i = 1}^n \delta_{Y^i_t}
	\end{align*}
denotes the empirical distribution of the particles, and \(\us\) is such that
\[
\us \us^* (\v^k (t, \cdot\,), t, Y^k_t, \x_n (Y_t)) = \int \sigma \sigma^* (f, t, Y^k_t, \x_n (Y_t)) \, \v^k (t, df)
\]
for a volatility coefficient \(\sigma\).
Here, \(\v^1, \dots, \v^n\) are probability kernels that model the control variables, and \(W^1, \dots, W^n\) are independent cylindrical Brownian motions. This corresponds to a relaxed control framework in the spirit of \cite{nicole1987compactification,ElKa15}. Let \(\cR^n(x)\) be the set of joint empirical distributions of particles and controls (latter are captured via \(\v^k (t, df) \, dt\) in a suitable space of Radon measures). 
The associated set of mean field limits is denoted by \(\cR^0 (x)\). It consists of  probability measures supported on the set of laws of \((Y, \v (t, df) \, dt)\), where \(Y\) solves a controlled McKean--Vlasov equation of the form
\[
d Y_t = \int b (f, t, Y_t, P^Y_t) \, \v (t, df) \, dt + \us( \v (t, \cdot\,), t, Y_t, P^Y_t) \, d W_t, \quad Y_0 = x, 
\]
with \(P^Y_t = \mathcal{L} \textit{aw}\, (Y_t)\). 

Conceptually, our main results are divided into two groups. The first one is probabilistic and deals with the convergence of controlled particle systems, while the second one sheds light on mean field limits from the perspective of stochastic optimal control.

On the probabilistic side, we show that \(\cR^n (x)\) and \(\cR^0 (x)\) are nonempty and compact in a suitable Wasserstein space, and that the maps \(x \mapsto \cR^n (x)\) converge to \(x \mapsto \cR^0 (x)\) uniformly on compacts in the Hausdorff metric topology. This result can be seen as set-valued propagation of chaos. Indeed, when \(\cR^n (x)\) and \(\cR^0 (x)\) are singletons, we recover a classical formulation of the propagation of chaos property. To the best of our knowledge, the concept and formulation of set-valued propagation of chaos is new. 
The observation \(\cR^0 (x) \not = \emptyset\) includes an existence result for (uncontrolled) McKean--Vlasov SPDEs in a variational framework, comparable to a recent result from \cite{HLL23}. 
Our proof for \(\cR^0 (x) \not = \emptyset\) is based on a particle approximation. In the absence of controls, this can be compared to a recent propagation of chaos result from \cite{HLL23} for the stochastic 2D Navier--Stokes equation.

As second main contribution, we investigate approximation properties of optimal control problems. Namely, if \(\psi\) is a continuous input function of suitable growth, we prove that the value functions associated to the particle approximation, i.e., 
\[
x \mapsto \sup_{Q \in \cR^n (x) } E^Q \big[ \psi \big]
\]
converge uniformly on compacts to the value function of the mean field limit, i.e., 
\[
x \mapsto \sup_{Q \in \cR^0 (x)} E^Q \big[ \psi \big].
\]
 We also derive ramifications of this statement for upper and lower semicontinuous input functions~\(\psi\) of suitable growth. These results allow us to deduce limit theorems in the spirit of the seminal work \cite{LakSIAM17}. Namely, we show that all accumulation points of sequences of \(n\)-state nearly optimal controls maximize the mean field value function, and that any optimal mean field control can be approximated by a sequence of \(n\)-state nearly optimal controls. 

	Let us now discuss our assumptions on the coefficients \(b\) and \(\sigma\). The main condition has two layers, each formulated with the help of a separate Gelfand triple. On the first layer, we impose continuity, coercivity and growth assumptions that can be compared to mild existence conditions from \cite{GRZ}. These assumptions entail that \(\cR^n (x)\) and \(\cR^0 (x)\) are nonempty and compact, as well as the particle approximation. As explained in \cite{GRZ}, such conditions can be verified for stochastic porous media and Navier--Stokes equations. 
	
	The second Gelfand triple is used to formulate growth and weak monotonicity conditions, which we employ to establish the set-valued propagation of chaos and the limit theory for stochastic control problems. These assumptions can be verified for stochastic porous media equations, but they fail for stochastic Navier--Stokes equations that lack weak monotonicity properties (see \cite{liu_rockner}). It is an interesting open problem whether the weak monotonicity conditions in our work can be replaced by local monotonicity conditions (\cite{liu_r:JFA}). Such a strengthening appears to be challenging due to the non-local structure of McKean--Vlasov equations and we leave it for future research.
	
	To illustrate our theory, we verify our assumptions for a slow diffusion framework where the controlled particle system is of the form
	\[
	d Y^k_t = \Big[ \Delta ( |Y^k_t|^{q - 2} Y^k_t) + \frac{1}{n} \sum_{i = 1}^n\, (Y^k_t - Y^i_t)+ \int c (f)\, \v^k (t, df) \Big] \, dt + \sigma \, d W^k_t,
	\]
	with \(q \geq 2\) and \(k = 1, \dots, n\). It is worth mentioning that our results apply to the classical finite dimensional framework. 

We now comment on related literature and proofs. 
Our work is heavily inspired by the paper~\cite{LakSIAM17}, which develops a limit theory for mean field control in a finite dimensional framework.
Similar to \cite{LakSIAM17}, parts of our proofs rely on compactification and martingale problem methods that were established in~\cite{nicole1987compactification}. In contrast to \cite{nicole1987compactification,LakSIAM17}, we deal with an infinite dimensional setting that is technically different. 
Let us also mention that our work covers some new finite dimensional cases.
In the following, we highlight some important points that lead to technical difficulties.
Due to multiple state spaces (recall that we work with two Gelfand triples), we have to keep track of several topologies that influence convergence and continuity properties. 
Under our assumptions on the coefficients, we have no a priori moment estimates, which we therefore have to build into our setting. Furthermore, we avoid conditions for strong existence and uniqueness of equations with random coefficients. To overcome this obstacle, we prove new weak existence results that keep track of the driving noise, adapting a method from \cite{jacod1981weak}, and we apply a change of topology, where we work with the second Gelfand triple. Finally, compared to the finite dimensional case, we also require different relative compactness methods, which are influenced by~\cite{GRZ,LakSPA15}.
	
	The paper is structured as follows. Our framework and the main results are explained in Section~\ref{sec: main1}. 
	The slow diffusion example is discussed in Section~\ref{sec: ex} and the proof of our main theorem is given in Section~\ref{sec: pf}. To improve the readability of this paper, we added Appendix~\ref{sec: appendix}, which collects auxiliary results from the literature that are used in our proofs.
	
	\smallskip 
\noindent
{\bf Remarks on Notation.}
	 In this paper, \(C\) denotes a generic positive constant that might change from line to line. In case the constant depends on important quantities, this is mentioned specifically. 
For reader's convenience, let us also provide an overview on notations used in the main body of the paper. 

\smallskip 
\noindent
{\bf State spaces:}
\smallskip

	\begin{tabular}{ll}
	\(\Y \subset \H \subset \X \subset \V\)  & \(\H, \X, \V\) separable Hilbert spaces, \(\Y\) separable reflexive Banach space.
	\\
	\(\V\) & State equations have paths in \(C ([0, T]; \V)\).
		\\
	\(\Y\) & State equations have paths in \(L^\alpha ([0, T]; \Y)\).
	\\
	\(\H\) & State equations have moments associated with \(\| \cdot \|_\H\).
	\\
		\(\X\) & Auxiliary space used in Condition~\ref{cond: main2}.
	\\
		\(\mathbb{U}\)   &Separable Hilbert space; state space for the driving noise. 
	\\
	\(F\)  &Compact Polish space; action space for the control variable. 
	\end{tabular}
	
		\smallskip
		\newpage
		
	\noindent
	{\bf Coefficients, constants and auxiliary functions:}

	\smallskip 
	
	\noindent
	\begin{tabular}{ll}
		\(T > 0\) & Finite time horizon.
		\\
		$b,\sigma$ & Drift and diffusion coefficients of the state equations; see \eqref{eq: def coefficients}.
		\\
		\(\lambda, \alpha, \gamma, \beta, \eta, \al \in \bR_+\) & Constants satisfying the constraints \eqref{eq: cond constants}.
		\\
		\(\lambda\) & Growth constant from Condition~\ref{cond: main1}.
		\\
		\(\alpha\) & Level of integrability in the path space \(\Omega\); see \eqref{eq: Omega def}. 
		\\
		\(\gamma\) & Power constant in Condition~\ref{cond: main1}.
		\\
		\(\beta\) & Power constant in Conditions~\ref{cond: main1} and \ref{cond: main2}.
		\\
		\(\eta\) & Level of integrability of state equations; see \eqref{eq: def G}. 
		\\
		\(\al\) & Power of Wasserstein space in main result; see Theorem~\ref{theo: main1}.
		\\
		$\mathcal{N}, \mathcal{N}_p$ & Auxiliary growth function; see \eqref{eq: Np}. 
		\\
		\(\varkappa\) & Moment control function; see \eqref{eq: kappa}.
		\\
		\(\mathcal{L}_{g, v}\) & Generator mean field equations; see \eqref{eq: def gen MF}.
		\\
		\(\mathcal{L}^i_{g, v^1, \dots, v^n}\) & Generator \(i\)-th state equation; see \eqref{eq: def gen SE}.
	\end{tabular}
	
	\smallskip
	
	\noindent
	{\bf Path/Wasserstein spaces:}
	
	\smallskip 
	
	\noindent
	\begin{tabular}{ll}
	$(\Omega, \cF)$  &Canonical path space for the state process with its Borel \(\sigma\)-field; see \eqref{eq: Omega def}. 
	\\
	$(\mathbb{M}, \mathcal{M})$ & Space of Borel measures on \([0, T] \times F\) with canonical \(\sigma\)-field.
	\\
	$(\Theta, \mathcal{O})$ & \(= (\Omega \times \mathbb{M}, \cF \otimes \mathcal{M})\); joint path space for state variable and control process.
	\\
	$\mathcal{P}(E)$  & Borel probability measures on a metric space $E$. 
	\\
	\(\mathcal{P}^r (E)\)& Probability measures on $E$ with finite $r$-th moment. 
	\\
 \(\fP^{2 \eta}_{\al} (\H)\) & \(\fP^{2 \eta} (\H)\) endowed with the topology induced by \(\w^{\V}_{\al}\).
	\end{tabular}
	
		\smallskip
		
	\noindent
	{\bf Metrics:}
	
	\smallskip 
	
	\noindent
	\begin{tabular}{ll}
			$\mathsf{w}_E^r$  & $r$-Wasserstein metric on $\mathcal{P}^r(E)$; see \eqref{eq: WM}.
		\\
		$\mathsf{h}$  & Hausdorff metric on compact subsets of \(\mathcal{P}^\varrho (\mathcal{P}^\varrho (\Theta))\); see \eqref{eq: def HDM}.
		\\
		\(\| \mu \|_{r, E}\) & \(r\)-th moment of measure \(\mu \in \fP (E)\); see \eqref{eq: def moment function}.
	\end{tabular} 
	
		\smallskip
		
	\noindent
	{\bf Canonical maps:}
	
	\smallskip 
	
	\noindent
	\begin{tabular}{ll}
	\(X\) & Canonical process on \(\Omega\); \(X_t (\omega) = \omega (t)\).
	\\
	\(M, \mathfrak{m}\) & Canonical map and kernel on \(\mathbb{M}\); \(M (dt, df) = \mathfrak{m} (t, M, df) \, dt\).
	\\
	\(\Z_n\) & Empirical measure on the product space \(\Theta^n\); see \eqref{eq: def emp theta}.
	\\
	\(\x_n\) & Empirical measure on the product space \(\Omega^n\); see \eqref{eq: def emp omega^n}. 
	\end{tabular}

	\smallskip
	
	\noindent
	{\bf Control rules:}
	
	\smallskip 
	
	\noindent
	\begin{tabular}{ll}
	$\cC^n(x)$  & Laws of controlled $n$-particle system; see Definition~\ref{def: C^n}.
	\\
	$\cC^0(x)$  & Laws of controlled mean field system; see Definition~\ref{def: C MK}.
	\\
	$\cR^n(x)$ & Joint empirical distributions of particles and controls; see \eqref{eq: def R^n}.
	\\
	$\cR^0(x)$  & \(= \mathcal{P} (\cC^0 (x)) \cap \J(x)\).
		\\
	\(\mathscr{G}\) & Moment control for mean field equations; see \eqref{eq: def G}.
	\\ 
	\(\J (x)\) & Moment control for state equations; see \eqref{eq: def J}.
	\end{tabular}

\section{Limit Theory for Controlled SPDEs in a Variational Framework} \label{sec: main1}

	\subsection{Motivation}
	The objective of this paper is to establish the convergence of interacting controlled infinite dimensional systems to their mean-field limit. To motivate the structure of our mathematical results, we first provide motivation and intuition for the finite-dimensional setting.

	Consider a system with \(n\) players whose state processes interact through their empirical distributions given by the system 
	\[
	d X_t^k = b (X^k_t, u_t^k, \x^n_t ) \, dt + \sigma (X^k_t, u_t^k, \x^n_t ) \, dW^k_t, \quad X^k_0 = x, \quad \x^n_t = \frac{1}{n} \sum_{k = 1}^n \delta_{X_t^k},
	\] 
	where \(W^1, \dots, W^n\) are independent Brownian motions and \(u^1, \dots, u^n\) are controls selected by the central planner. The planner seeks to maximize an expected payoff functional, typically of the form 
	\[
	\frac{1}{n} \sum_{k = 1}^n E \Big[ \int_0^T g (X^k_s, u^k_s, \x^n_s) \, ds + f (X^k_T, \x^n_T) \Big]
	\] 
	with a running term \(g\) and a terminal term \(f\). The coefficients \(b\) and \(\sigma\), as well as the payoffs \(g\) and \(f\), are the same for all agents. While such systems provide natural models for large-scale interactions, they tend to become computationally intractable when the number \(n\) of agents is large. Consequently, we seek a simplified limiting structure.
	
	In the \textit{uncontrolled} case, the theory of propagation of chaos (\cite{SnzPoC}) establishes that the weakly interacting equations
	\[
	d Y^k_t = b (Y^k_t, \mathscr{Y}^n_t) \, dt + \sigma (Y^k_t, \mathscr{Y}^n_t ) \, d W^k_t, \quad Y^k_0 = y, \quad \mathscr{Y}^n_t = \frac{1}{n} \sum_{k = 1}^n \delta_{Y^k_t}, 
	\] 
	become asymptotically i.i.d., converging to the McKean--Vlasov limit
	\[
	d Y_t = b (Y_t, P^Y_t) \, dt + \sigma (Y_t, P^Y_t ) \, d W_t, \quad Y_0 = y, \quad P^Y_t = \text{Law of \(Y_t\)}.
	\] 
	Under suitable continuity assumptions on the coefficients and payoff functions, the value function of this system converges to the value function of the limit system. Intuitively, this holds because a single agent exerts negligible influence on the empirical average, and the symmetry ensures exchangeability.

The {\it controlled} case is substantially more delicate. Informally, one immediate reason is that limiting and optimization operations do not commute in general. In addition, one must track not only the state equations but also the control processes. We emphasize that, for general controls, the resulting controlled system need not be exchangeable, and small perturbations in the empirical distribution may lead to large changes in the control. These features make the analysis of optimal control problems more subtle than classical propagation of chaos settings.
	
	To address the convergence of the optimal control problems, we adopt an abstract set-valued formulation. We define the set of admissible laws for the \(n\)-particle system as
	\begin{align*}
	K^n = \Big\{ P \circ \Big( \frac{1}{n} \sum_{k = 1}^n \delta_{X^k} \Big)^{-1} \colon  \, &d X_t^k = b(X^k_t, u_t^k, \x^n_t) \, dt + \sigma (X^k_t, u_t^k, \x^n_t) \, dW^k_t, \\& X^k_0 = x, \ u^1, \dots, u^n \text{ control processes} \Big\}
	\end{align*}
	and its mean field limits as
	\begin{align*}
	K^0 = \Big\{ &\text{probability measures concentrated on the set of laws of} 
	\\&\ d X_t =b(X_t, u_t, P^X_t) \, dt + \sigma (X_t,u_t, P^X_t) \, d W_t, \ X_0 = x, \ u \text{ control process} \Big\}.
	\end{align*} 
	From this perspective, our goal is to verify the convergence of the value functions:
	\begin{align} \label{eq: motivating problem}
	\sup_{P \in K^n} E^P \big[ \text{payoff functional} \big] \to \sup_{P \in K^0} E^P \big[ \text{payoff functional} \big].
	\end{align} 
This formulation motivates studying the convergence of control problems using the Hausdorff metric topology. It is a well-known result in set-valued analysis (see, e.g., \cite[Lemma~2.15]{C_SIAM}) that the value map
	\[
	\mathcal{K} (E) \ni K \mapsto \sup_K \varphi
	\]
	is continuous on the space of compact subsets \(\mathcal{K} (E)\) endowed with the Hausdorff metric, provided the payoff \(\varphi\) is continuous. Thus, \eqref{eq: motivating problem} follows if \(K^0, K^1, \dots\) are compact and \(K^n \to K^0\) in the Hausdorff metric.
	
	This notion of convergence generalizes classical propagation of chaos to the controlled setting. Indeed, in the uncontrolled case, the sets \(K^n\) and \(K^0\) are singletons, and Hausdorff convergence reduces to the convergence of the unique elements. Moreover, Hausdorff convergence captures stability properties beyond the convergence of the value functions. For instance, the continuity of maps such as
	\begin{align*}
	&K \mapsto \sup_{P, Q \in K} \big| E^P [ \varphi ] - E^Q [ \varphi ] \big|, \quad
	K \mapsto \sup_{P \in K} \Big( E^P [ \varphi^2 ] - E^P [ \varphi ]^2 \Big) = \sup_{P \in K} \on{Var}^P [ \varphi ], 
	\end{align*}
	implies that measures of uncertainty and model sensitivity are preserved in the limit.
	
	In the remainder of this section, we outline how the Hausdorff convergence \(K^n \to K^0\) is established, which motivates the structure of our main result (Theorem~\ref{theo: main1}).
Under suitable compactness assumptions, this convergence is characterized by the following two properties:
	\begin{enumerate}
		\item[(a)] Every sequence \((\mu^n)_{n = 1}^\infty\) with \(\mu^n \in K^n\) is relatively compact and all of its accumulation points are in \(K^0\).
		\item[(b)] For every \(\mu^0 \in K^0\) there exists a sequence \((\mu^n)_{n = 1}^\infty\) such that \(\mu^n \in K^n\) and \(\mu^n \to \mu^0\). 
	\end{enumerate}
	Property (a) guarantees that the set of admissible laws is closed under limits, ensuring that accumulation points of \(n\)-state controls remain feasible in the mean-field limit. Property (b) ensures that every mean-field admissible law can be approximated by a sequence of \(n\)-state controls. As shown in \cite{LakSIAM17}, these properties imply that limits of \(n\)-state nearly optimal controls are optimal for the mean-field system, and conversely, every mean-field optimal control can be approximated by \(n\)-state nearly optimal controls. 
	
Our program investigates (a) and (b) in the context of interacting controlled SPDEs. Compared to the finite-dimensional case, the infinite-dimensional setting introduces significant analytical challenges. The state equations are posed in a variational framework involving distinct function spaces, requiring convergence to be tracked across multiple topologies. Furthermore, standard moment estimates and strong-solution arguments are unavailable under our assumptions. Consequently, we develop specialized tightness methods, martingale problem techniques, and weak existence results to establish the required compactness and convergence properties.

\subsection{The Setting and Notation} \label{sec: notation} 
Let \(\H, \X\) and \(\V\) be separable Hilbert spaces and let \(\Y\) be a separable reflexive Banach space, whose topological duals are denoted by \(\H^*, \X^*, \V^*\) and \(\Y^*\), respectively.
We assume that 
\[
\Y \subset \H \subset \X \subset \V
\]
continuously and densely. 
For any Banach space \(\mathbb{E}\), we denote its norm by \(\|\cdot\|_{\mathbb{E}}\) and the dualization between \(\mathbb{E}\) and \(\mathbb{E}^*\) by \(_\mathbb{E}\langle \, \cdot, \cdot\, \rangle_{\mathbb{E}^*}\), i.e., 
\[
_\mathbb{E} \langle e, e^* \rangle_{\mathbb{E}^*} := e^* (e), \quad e^* \in \mathbb{E}^*, e \in \mathbb{E}.
\]
In general, we endow all Banach spaces with their norm topologies. 

Below, we will work with two Gelfand triples
\[
\V^* \subset \H^* \cong\, \H \subset \V, \quad \Y \subset \X \cong\, \X^* \subset \Y^*.
\]
In particular, for \(x \in \H, y \in \V^*\) and \(z \in \X, v \in \Y\), we have 
\[
{_\V} \langle x, y \rangle_{\V^*} = \langle x, y \rangle_\H, \qquad 
{_\Y} \langle v, z \rangle_{\Y^*} = \langle v, z \rangle_\X, 
\]
where \(\langle \cdot, \cdot\rangle_\H\) and \(\langle \cdot, \cdot \rangle_\X\) are the scalar products of \(\H\) and \(\X\), respectively. For an example of how the spaces can be chosen in specific situations, the reader is referred to Section~\ref{sec: ex} below.

Throughout this paper, we extend scalar functions from a smaller to a larger Banach space by setting them \(\infty\) outside its original domain. For instance, we set \(\|x\|_\Y := \infty\) for \(x \in \V \setminus \Y\). In this way, \(\|\cdot\|_\Y\) becomes a lower semicontinuous map from \(\V\) into \([0, \infty]\), see \cite[Excercise~4.2.3]{liu_rockner}. 

Before we proceed, let us give an overview on all parametric constants that we use in this paper:
\begin{align*}
	T, \lambda, \alpha, \gamma, \beta, \eta, \al \in \bR_+,
\end{align*}
such that
\begin{align} \label{eq: cond constants}
	T, \lambda > 0, \quad \alpha, \gamma > 1, \quad \beta \geq 2 \vee \gamma, \quad \eta \geq \frac{\beta}{2} \vee \frac{\alpha}{2}, \quad \eta > 2, \quad \alpha > \al \geq 1.
\end{align}
Define the space
\begin{align} \label{eq: Omega def}
\Omega := \Big\{ \omega \in C ([0, T]; \V) \colon \int_0^T \|\omega(s)\|_\Y^\alpha \, ds < \infty \Big\}
\end{align} 
and endow it with the topology induced by the metric 
\[
\d (\omega, \omega') := \sup_{s \in [0, T]} \|\omega(s) - \omega' (s)\|_{\V} + \Big( \int_0^T \| \omega (s) - \omega' (s)\|^\alpha_\Y \, ds \Big)^{1/ \alpha}, \ \ \omega, \omega' \in \Omega,
\]
which turns it into a Polish space. Further, set \(\cF := \mathcal{B}(\Omega)\) and 
let  \(X = (X_t)_{t \in [0, T]}\)  be the coordinate map on \(\Omega\), i.e., \(X_t (\omega) = \omega (t)\) for \(\omega \in \Omega\) and \(t \in [0, T]\). The corresponding filtration \(\mathbf{F} = (\cF_t)_{t \in [0, T]}\) is defined by \(\cF_t := \sigma (X_s, s \in [0, t])\).

Take another separable Hilbert space~\(\U\), which we use as state space for the randomness that drives our systems. The space of Hilbert--Schmidt operators from \(\U\) into \(\H\) is denoted by \(L_2 (\U; \H)\) and the Hilbert--Schmidt norm is denoted by \(\|\cdot\|_{L_2 (\U; \H)}\). 

For any metric space \((E, m_E)\), let \(\fP(E)\) be the set of Borel probability measures on \(E\) and endow this space with the weak topology, i.e., the topology of convergence in distribution. It is well-known that \(\fP (E)\) is a Polish space once \(E\) has the same property, see Lemma~\ref{appendix: Suslin prob meas} in Appendix~\ref{sec: appendix}.
For \(r \geq 1\) and an arbitrary reference point \(e_0 \in E\), we set (with abuse of notation)
\begin{align} \label{eq: def moment function}
 \|\mu\|_{r, E} := \Big( \int m_E (e, e_0)^r\, \mu (d e) \Big)^{1/r}, \quad \mu \in \fP(E),
\end{align} 
and
\[
\fP^r (E) := \big\{ \mu \in \fP (E) \colon \|\mu\|_{r, E} < \infty \big\}.
\]
Further, let \(\w^E_r\) be the \(r\)-Wasserstein metric on \(\fP^r (E)\), i.e., 
\begin{align} \label{eq: WM}
\w^E_r (\mu, \nu) := \Big( \inf_{\pi \in \Pi (\mu, \nu)} \int m _E(x, y)^r \, \pi (dx, dy) \Big)^{1/r}, \quad \mu, \nu \in \fP^r (E),
\end{align} 
where \(\Pi (\mu, \nu)\) is the set of couplings of \(\mu\) and \(\nu\), i.e., the set of all Borel probability measures \(\pi\) on \(E^2\) such that \(\pi (dx \times E) = \mu\) and \(\pi (E\times dy) = \nu\).
As shown in \cite{CD08}, \(\w^E_r\) is truly a metric once \((E, m_E)\) is separable.
 It is also well-known (\cite[Theorem~6.18]{villani09}) that \((\fP^r (E), \w^E_r)\) is a complete separable metric space once \((E, m_E)\) has the same property. We endow \(\fP^r (E)\) with the topology induced by \(\w^E_r\). In case \(E\) is a (separable) Banach space, we take \(e_0 = 0\) by convention.

Next, fix a function \(\N \colon \Y \to [0, \infty]\) with the following properties: it is lower semicontinuous, \(\N (x) = 0\) implies \(x = 0\), 
\[
\N (cy) \leq c^\alpha \N (y), \quad \forall \, c \geq 0, \, y \in \Y, 
\]
and 
\[
\big\{ y \in \Y \colon \N (y) \leq 1 \big\} \text{ is relatively compact in \(\Y\)}.
\]
Moreover, we set
\begin{align} \label{eq: Np}
\N_p (y) := \|y\|^{2 (p - 1)}_\H \N (y), \quad p \geq 1, \, y \in \Y.
\end{align} 

To model the control variable, we fix a compact metrizable space \(F\), which is typically called action space. Let \(\m\) be the set of all Radon measures on \(([0, T] \times F, \mathcal{B} ([0, T] \times F))\) whose projection to \([0, T]\) coincides with the Lebesgue measure. 
We endow \(\m\) with the weak topology, which turns it into a compact metrizable space (\cite[Theorem 2.2]{EKNJ88}). The Borel \(\sigma\)-field on \(\m\) is denoted by \(\mathcal{M}\) and the identity map on \(\m\) is denoted by \(M\).
Further, we define the filtration
\[
\mathcal{M}_t := \sigma \big(M (C) \colon C \in \mathcal{B}([0, t] \times F)\big), \quad t \in [0, T].
\]
In the following, we consider the product space \(\Theta:= \Omega \times \m\). Let \(\r\) be a metric on \(\m\) that induces its topology. Then, we endow \(\Theta\) with the metric \(\d + \r\), which induces the product topology. 
We also set \(\mathcal{O} := \cF \otimes \mathcal{M}\) and \(\mathbf{O} := (\mathcal{O}_t)_{t \in [0, T]}\) with \(\mathcal{O}_t := \cF_t \otimes \mathcal{M}_t\). Adapting our previous notation, we denote the coordinate map on \(\Theta\) by \((X, M)\).

Let \(\j \colon \H \to \bR_+\) be a continuous function that is bounded on bounded subsets of \(\H\) and such that
\begin{equation} \label{eq: kappa}
	\begin{split}
\j (x) \geq  (1 &+ 2 \|x\|^{2\eta}_\H  ) e^{6  \lambda \eta T (1 + 20 \eta)} + \frac{1}{2} \Big[ \|x\|^{2}_\H + 9 \lambda T  (1 + 2 \|x\|^{2}_\H  ) e^{126  \lambda  T} \Big]
\\&+ \frac{1}{\beta + 2} \Big[ \|x\|^{\beta + 2}_\H + 3 \lambda \Big( \frac{\beta}{2} + 1\Big) T (\beta + 3) (1 + 2 \|x\|^{\beta + 2}_\H  ) e^{3  \lambda (\beta + 2) T (10 \beta + 21)}  \Big].
\end{split}
\end{equation} 
The lower bound for \(\j\) is ad hoc. In our proofs, it will appear in a Gronwall argument. 
For \(n \in \mathbb{N}\) and \(x \in \H\), define
\begin{align} \label{eq: def G}
	\NN& := \Big\{ \mu \in \fP (\Theta) \colon E^\mu \Big[ \sup_{s \in [0, T]} \|X_s\|^{2\eta}_\H + \int_0^T \big[ \N (X_s) + \N_{\beta /2 + 1} (X_s) \big] \, ds \Big] < \infty \Big\},
\end{align}
and
\begin{equation} \label{eq: def J}
\begin{split}
\J (x) := \Big\{ Q \in \fP (\fP (\Theta)) \colon \int E^\mu \Big[ & \sup_{s \in [0, T]} \|X_s\|^{2\eta}_\H \\&+ \int_0^T \big[ \N (X_s) + \N_{\beta /2 + 1} (X_s) \big] \, ds \Big] \, Q (d \mu) \leq \j (x) \Big\}.
\end{split}
\end{equation} 
	With applications in view, we would like to circumvent certain types of linear growth conditions (with respect to certain ``large'' norms) on the coefficients of the model investigated below. Such conditions typically lead to moment estimates, which are very useful in proofs of compactness properties. To overcome this, we include the necessary moment properties already in the definition of the model, which explains the role of \(\NN\) and~\(\J\). Of course, this procedure needs some care with regard to the existence of the model, which is the reason for the very explicit lower bound for \(\j\).
	
By Lemma~\ref{appendix: bogachev} in Appendix~\ref{sec: appendix}, the sets \(\NN\) and \(\J(x)\) are Borel subsets of \(\fP(\Theta)\) and \(\fP(\fP(\Theta))\), respectively.

We write \(\fP^{2 \eta}_{\al} (\H)\) for the space \(\fP^{2 \eta} (\H)\) endowed with the topology induced by \(\w^{\V}_{\al}\), i.e., the subspace topology coming from \(\fP^\al (\V)\).

Finally, let \(\mathsf{h}\) be the Hausdorff metric on the space of compact subsets of \(\mathcal{P}^\varrho (\mathcal{P}^\varrho (\Theta))\), which, for two compact sets \(\mathscr{K}, \mathscr{K}^* \subset \mathcal{P}^\varrho (\mathcal{P}^\varrho (\Theta))\),  is defined by 
\begin{align} \label{eq: def HDM}
	\mathsf{h} (\mathscr{K}, \mathscr{K}^*) := \max \Big\{ \max_{P \in \mathscr{K}} \w_{\varrho}^{\mathcal{P}^\varrho (\Theta)} (P, \mathscr{K}^*), \, \max_{P \in \mathscr{K}^*} \w_{\varrho}^{\mathcal{P}^\varrho (\Theta)} (P, \mathscr{K}) \Big\}, 
\end{align}
where
\[
\w_{\varrho}^{\mathcal{P}^\varrho (\Theta)} (P, \mathscr{K}^*) := \inf_{Q \in \mathscr{K}^*} \w_\varrho^{\mathcal{P}^\varrho (\Theta)} (P, Q), \quad \w_{\varrho}^{\mathcal{P}^\varrho (\Theta)} (P, \mathscr{K}) := \inf_{Q \in \mathscr{K}} \w_\varrho^{\mathcal{P}^\varrho (\Theta)} (P, Q)
\]
are the distance functions associated to \(\mathscr{K}^*\) and \(\mathscr{K}\). We refer to the Sections~3.16 and 3.17 in~\cite{charalambos2013infinite} for more information on the Hausdorff metric and the associated Hausdorff metric topology. 

\subsection{The Coefficients}
Next, we introduce the coefficients for the equations under consideration. 
We take two Borel functions  
\begin{equation} \label{eq: def coefficients} \begin{split}
	&b \colon F \times [0, T] \times \Y \times \fP^{2 \eta}_\al(\H) \to \V, \\
	&\sigma \colon F \times [0, T] \times \Y \times \fP^{2 \eta}_\al (\H) \to L_2 (\U; \H),
\end{split}
\end{equation}
such that 
\[
\sup_{f \in F} \| \sigma (f, t, y, \mu)\|_{L_2 (\U; \H)} < \infty
\]
for all \((t, y, \mu) \in [0, T] \times \Y \times \fP^{2 \eta}_{\al}\).
Further, let 
\[
\us \colon \fP (F) \times [0, T] \times \Y \times \fP^{2 \eta}_\al (\H) \to L_2 (\U; \H)
\]
be a Borel map such that 
\begin{align} \label{eq: os}
\us \us^* (\nu, t, y, \mu) = \int \sigma \sigma^* (f, t, y, \mu) \, \nu (df) 
\end{align}
for all \((\nu, t, y, \mu) \in \fP (F) \times [0, T] \times \Y \times \fP^{2 \eta}_\al (\H)\). 
We emphasize that \(\us\) always exists. For example, one can take the nonnegative operator root (\cite[Proposition~2.3.4]{liu_rockner} or \cite[Theorem~VI.9]{ReSi}) of the right hand side in~\eqref{eq: os}. As the root map \(A \mapsto A^{1/2}\) is continuous in the norm and strong operator topologies (\cite[Problem~14, p. 217]{ReSi}), it does not only preserve measurability but also continuity.

Our main result has two layers. 
The first layer requires the following condition.

\begin{condition} \label{cond: main1}
	\begin{enumerate}
		\item[\textup{(i)}]  \(\V^* \subset \Y\) compactly.
		\item[\textup{(ii)}] For every \(v \in \V^*, t \in [0, T]\) and \(\nu \in \fP (F)\), the maps 
		\begin{align*}
			F \times \Y \times \fP^{2 \eta}_\al(\H) \ni (f, y, \mu) &\mapsto {_\V}\langle b (f, t, y, \mu), v \rangle_{\V^*} \in \bR, \\
			F \times \Y \times \fP^{2 \eta}_\al(\H) \ni (f, y, \mu)  &\mapsto \sigma^* (f, t, y, \mu) v \in \U, \\
				\Y \times \fP^{2 \eta}_\al(\H) \ni (y, \mu)  &\mapsto \us^* (\nu, t, y, \mu) v \in \U
		\end{align*}
	are continuous. Further, for every \(v \in \V^*\) and all compact sets \(\mathscr{K} \subset \Y\) and \(\mathscr{H} \subset \fP^{2 \eta}_\al (\H)\), the maps \({_\V} \langle b, v \rangle_{\V^*}\) and \(\us^* v\) are bounded on \(F \times [0, T] \times \mathscr{K} \times \mathscr{H}\) and \(\fP (F) \times [0, T] \times \mathscr{K} \times \mathscr{H}\), respectively.
		\item[\textup{(iii)}] For the constant \(\lambda> 0\) as in \eqref{eq: cond constants}, it holds that
		\begin{align}
			{_\V}\langle b (f, s, w, \mu), w\rangle_{\V^*} & \leq \lambda \big(1 +\|w\|^2_\H + \|\mu\|^2_{2, \H}\big)  - \N (w), \label{eq: cond coe}
			\\
		\|\sigma (f, s, v, \mu)\|^{2 \gamma}_{L_2 (\U; \H)} + \| b (f, s, v, \mu)\|^{\gamma}_{\V} &\leq \lambda \big( (1 + \N (v) ) (1 + \|v\|^\beta_\H )+ \|\mu\|^{\beta}_{\beta, \H} \big), \label{eq: cond growth drift}
			\\
			\| \us (\nu, s, v, \mu) \|^2_{L_2 (\U; \H)} &\leq  \lambda \big(1 + \|v\|^2_\H + \|\mu\|^2_{2, \H} \big), \label{eq: cond growth diffusion}
		\end{align}
		for all \(f \in F, \nu \in \fP (F), s \in [0, T], w \in \V^*, v \in \Y\) and \(\mu \in \fP^{2 \eta}_\al (\H)\).
	\end{enumerate}
\end{condition}

The second layer needs also the next condition.

\begin{condition} \label{cond: main2}
	\begin{enumerate}
		\item[\textup{(i)}] \(b (F \times [0, T] \times \Y \times \fP^{2 \eta}_\al(\H)) \subset \Y^*\).
		\item[\textup{(ii)}] There exists a constant \(C > 0\) such that 
		\begin{align}
			\| b (f, s, y, \mu) \|^{\alpha / (\alpha - 1)}_{\Y^*} &\leq C \big( (1 + \mathcal{N} (y) ) (1 + \|y\|^\beta_\H) + \|\mu\|^\beta_{\beta, \H}\big), \label{eq: cond 2 (ii.1)}
			\\
			{_{\Y}}\langle  y - v, b (f, s, y, \mu) - b (f, s, v, \mu^*) \rangle_{\Y^*} &\leq C \big( \|y - v\|^2_{\X} + \w^{\X}_2 (\mu, \mu^*) \big), \label{eq: cond 2 (ii.2)}
			\\
			\| \us (\nu, s, y, \mu) - \us ( \nu, s, v, \mu^*)\|^2_{L_2 (\U; \X)} &\leq C \big( \|y - v\|^2_{\X} + \w^{\X}_2 (\mu, \mu^*) \big) \label{eq: cond 2 (ii.3)}
		\end{align} 
	for all \(f \in F, \nu \in \fP (F), s \in [0, T]\) and \(y, v \in \Y, \mu, \mu^* \in \fP^{2 \eta}_\al(\H)\). 
	\end{enumerate}
\end{condition}

We are in the position to introduce the particle systems of interest and its proposed mean field limits.

\subsection{Controlled Particles Systems and Mean Field Limits}
For \(n \in \mathbb{N}\), define 
\begin{align} \label{eq: def emp theta}
&\Z_n \colon \Theta^n \to \fP (\Theta), \quad \Z_n (\theta^1, \dots, \theta^n) := \frac{1}{n} \sum_{k = 1}^n \delta_{\theta^k},
\\ \label{eq: def emp omega^n}
&\x_n \colon \Omega^n \to \fP (\Omega), \quad \x_n (\omega^1, \dots, \omega^n) := \frac{1}{n} \sum_{k = 1}^n \delta_{\omega^k}.
\end{align}
With little abuse of notation, we will also write 
\[
\x_n (X_s) = \frac{1}{n} \sum_{k = 1}^n \delta_{X^k_s}, \quad X_s = (X^1_s, \dots, X^n_s).
\] 
It is known (\cite[Lemma 3.2]{LakSPA15}) that there exists a predictable probability kernel \(\v\) from \([0, T] \times \Theta\) into \(F\) such that 
\[
M (dt, df) = \v (t, M, df) \, dt.
\]
We will also write \(\v (t, M)\) for the measure \(\v (t, M, df)\).
The following are the particles systems of interest.
\begin{definition} \label{def: C^n}
	For \(n \in \mathbb{N}\) and \(x \in \H\), let \(\cC^n (x)\)  be the set of all \(Q \in \fP (\Theta^n)\) with the following properties:
	\begin{enumerate}
	\item[\textup{(i)}] \(Q \circ \Z_n^{-1} \in \J (x)\); 
	\item[\textup{(ii)}] possibly on a standard extension of \((\Theta^n, \mathcal{O}^n, \mathbf{O}^n, Q)\), there exist independent cylindrical standard Brownian motions \(W^1, \dots, W^n\) over \(\U\) such that a.s., for \(k = 1, \dots, n\), all \(t \in [0, T]\) and \(v \in \V^*\),
	\begin{align*}
		{_\V}\langle X^k_t, v \rangle_{\V^*} = {_\V}\langle x, v\rangle_{\V^*} &+ \int_0^t \blv \Big\langle \int b (f, s, \oX^k_s, \x_n (X_s)) \, \v (s, M^k, df), v \Big \rangle_{\V^*} \, ds
		\\&+ \Big\langle \int_0^t \us( \v (s, M^k), s, \oX^k_s, \x_n (X_s)) \, d W^k_s, v \Big \rangle_\H.
	\end{align*}
	Of course, it is implicit that the integrals are well-defined.
\end{enumerate}
	Further, we define 
	\begin{align} \label{eq: def R^n}
	\cR^n (x) := \big\{ Q \circ \Z_n^{-1} \colon Q \in \cC^n (x) \big\} \subset \fP (\fP (\Theta)).
	\end{align} 
\end{definition}

The next definition introduces the proposed mean field limit of the system \((\cR^n (x))_{n = 1}^\infty\).
\begin{definition} \label{def: C MK}
	For \(x \in \H\), we define \(\cC^0 (x)\) to be the set of all measures \(Q \in \fP(\Theta)\) 
	with the following properties:
		\begin{enumerate}	
	\item[\textup{(i)}] \(Q \in \NN\); 
	\item[\textup{(ii)}] possibly on a standard extension of \((\Theta, \mathcal{O}, \mathbf{O}, Q)\), there exists a cylindrical standard Brownian motion \(W\) over \(\U\) such that a.s., for all \(t \in [0, T]\) and \(v \in \V^*\),
	\begin{align*}
		{_\V}\langle X_t, v \rangle_{\V^*} = {_\V}\langle x, v\rangle_{\V^*}&+ \int_0^t \blv \Big\langle  \int b (f, s, \oX_s, Q^X_s) \, \v (s, M, df), v \Big \rangle_{\V^*} \, ds
		\\&+ \Big\langle \int_0^t \us( \v (s, M),s, \oX_s, Q^X_s) \, d W_s, v \Big \rangle_\H.
	\end{align*}
	Of course, it is implicit that the integrals are well-defined.
\end{enumerate}
	Further, we set 
	\[
	\cR^0 (x) := \big\{ P \in \fP (\fP(\Theta)) \colon P (\cC^0 (x)) = 1\big\} \cap \J (x).
	\]
\end{definition}

	Under Condition~\ref{cond: main1}~(iii), the set \(\cC^0 (x)\) is a Borel subset of \(\fP (\Theta)\) by Lemma~\ref{lem: mg chara} below, and in this case \(\cR^0 (x) = \fP (\cC^0 (x)) \cap \J(x)\).

The Definitions~\ref{def: C^n} and \ref{def: C MK} are in the spirit of the variational framework for SPDEs as studied by Pardoux \cite{par75} and Krylov--Rozovskii \cite{krylov_rozovskii}, see also \cite{liu_rockner,par}.
In Section~\ref{sec: mp} below, we establish characterizations via controlled martingale problems as studied in \cite{EKNJ88,ElKa15}.

\subsection{Main Result}
The following theorem is the main result of this paper. 
\begin{theorem} \label{theo: main1}
	Assume that Condition \ref{cond: main1} holds and take a sequence \((x^n)_{n = 0}^\infty \subset \H\) such that \(x^n \to x^0\) in \(\|\cdot\|_\H\). Then, the following hold:
	\begin{enumerate}
		\item[\textup{(i)}] The sets \(\cR^0 (x^0)\) and \(\cR^n (x^n)\), for every \(n \in \mathbb{N}\), are nonempty and compact in \(\fP^\al (\fP^\al (\Theta))\). 
		\item[\textup{(ii)}] Every sequence \((Q^n)_{n = 1}^\infty\) with \(Q^n \in \cR^n (x^n)\) is relatively compact in \(\fP^\al (\fP^\al (\Theta))\) and each of its accumulation points is in the set \(\cR^0 (x^0)\).
		\item[\textup{(iii)}] For every upper semicontinuous function \(\psi \colon \fP^\al (\Theta) \to \bR\),
such that 
\begin{align} \label{eq: bound property}
	\exists \, C > 0 \colon \ \ |\psi (\nu)| \leq C  \big[ 1 + \|\nu\|^\al_{\al,  \Theta}\big] \ \ \forall \, \nu \in \fP^\al (\Theta),
\end{align}
		it holds that
		\[
		\limsup_{n \to \infty} \sup_{Q \in \cR^n (x^n)} E^Q \big[ \psi \big] \leq \sup_{Q \in \cR^0 (x^0)} E^Q \big[ \psi \big].
		\]
	\end{enumerate}
	Assume in addition that Condition \ref{cond: main2} holds.	
	\begin{enumerate}
		\item[\textup{(iv)}] For every \(Q^0 \in \cR^0 (x^0)\) there are \(Q^{n} \in \cR^{n} (x^{n})\) such that \(Q^{n} \to Q^0\) in \(\fP^\al(\fP^\al (\Theta))\). 
		\item[\textup{(v)}] For every lower semicontinuous function \(\psi \colon \fP^\al (\Theta) \to \bR\) with the property \eqref{eq: bound property}, it holds that 
		\[
		 \sup_{Q \in \cR^0 (x^0)} E^Q \big[ \psi \big] \leq \liminf_{n \to \infty} \sup_{Q \in \cR^n (x^n)} E^Q \big[ \psi \big].
		\]
				\item[\textup{(vi)}] For every compact set \(\mathscr{K} \subset \H\) and every continuous function \(\psi \colon \fP^\al (\Theta) \to \bR\) with the property \eqref{eq: bound property}, it holds that
				\begin{align} \label{eq: compact convergence}
				\sup_{x \in \mathscr{K} } \Big| \sup_{Q\in \cR^n (x)} E^Q \big[ \psi \big] - \sup_{Q\in \cR^0(x)} E^Q \big[ \psi \big] \Big| \to 0, \quad n \to \infty,
				\end{align}
			and the map 
			\[
			\H \ni x \mapsto \sup_{Q\in \cR^0(x)} E^Q \big[ \psi \big]
			\]
			is continuous.
		\item[\textup{(vii)}] 
		For every compact set \(\mathscr{K} \subset \H\), it holds that
		\[
		\sup_{x \in \mathscr{K}} \mathsf{h} ( \cR^n (x), \cR^0 (x)) \to 0, \quad n \to \infty,
		\]
		where \(\mathsf{h}\) is the corresponding Hausdorff metric.
		In particular, the map \(x \mapsto \cR^0 (x)\) is continuous in the Hausdorff metric topology. 
	\end{enumerate}
\end{theorem}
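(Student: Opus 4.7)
The plan is to combine a compactification and martingale-problem strategy in the spirit of El Karoui--M\'el\'eard \cite{nicole1987compactification,ElKa15} and Lacker \cite{LakSIAM17} with variational SPDE techniques, carefully tracking both Gelfand triples $\V^*\subset\H\subset\V$ and $\Y\subset\X\subset\Y^*$. Compactness of the action space $F$ makes $\m$ a compact Polish space, so the $M$-marginal is automatically tight and the real work concerns the state component. Throughout, I would alternate between a controlled martingale-problem characterization of $\cC^n(x)$ and $\cC^0(x)$ (for passing weak limits) and the variational SDE formulation (for It\^o's formula in the Gelfand-triple sense).

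\textbf{Parts (i)--(iii).} I would first derive uniform moment estimates by applying It\^o's formula to $\|X^k\|^{2p}_\H$ (for $p=1,\,\beta/2+1,\,\eta$) in the first Gelfand triple, combining the coercivity bound \eqref{eq: cond coe}, the growth \eqref{eq: cond growth diffusion}, Burkholder--Davis--Gundy and Gronwall; the constants inside $\j$ are calibrated precisely so that these bounds yield $Q\circ\Z_n^{-1}\in\J(x^n)$ for every $Q\in\cC^n(x^n)$, and the absorbing term $-\N(w)$ in \eqref{eq: cond coe} simultaneously delivers the integrated $\N(X_s)$ and $\N_{\beta/2+1}(X_s)$ estimates. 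Combined with the compact embedding $\V^*\subset\Y$ from Condition~\ref{cond: main1}(i), the growth \eqref{eq: cond growth drift} and a tightness criterion adapted from \cite{GRZ,LakSPA15}, this shows $\bigcup_n\cR^n(x^n)$ is relatively compact in $\fP^\al(\fP^\al(\Theta))$; the strict inequality $\alpha>\al$ from \eqref{eq: cond constants} supplies the uniform $\al$-integrability needed for the $\al$-Wasserstein topology. Identification of accumulation points then uses the martingale-problem characterization: the test functionals $x\mapsto{_\V}\langle x,v\rangle_{\V^*}$ are linear in $x$, and Condition~\ref{cond: main1}(ii) provides just enough joint continuity of the drift and volatility functionals to pass each $n$-particle martingale relation to its mean-field analogue, placing accumulation points in $\cR^0(x^0)$; lower semicontinuity of $\N$ together with Fatou closes the moment constraints $\J(x^n)$ and $\NN$ under weak limits. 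Nonemptiness of $\cC^n(x^n)$ is obtained by a Galerkin/Faedo approximation together with the adaptation of Jacod's weak-existence method flagged in the introduction, which preserves the driving Brownian motion without requiring pathwise uniqueness; nonemptiness of $\cR^0(x^0)$ then follows from particle-level existence via part~(ii). Finally, (iii) is immediate: USC functions satisfying \eqref{eq: bound property} are $\limsup$-stable under $\al$-Wasserstein convergence given the uniform $\al$-integrability just established.

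\textbf{Part (iv) and the main obstacle.} The converse direction is the core difficulty. Fix $Q^0\in\cR^0(x^0)$; conditionally on a realization $\mu\sim Q^0$, Definition~\ref{def: C MK} identifies $\mu$ with the law of a state/control/noise triple $(X,M,W)$ satisfying the mean-field SPDE. I would draw $n$ i.i.d.\ copies $(Y^k,M^k,W^k)_{k\le n}$ of $(X,M,W)$; a Wasserstein law of large numbers yields $\x_n(Y)\to\mu^X$ in $\fP^\al(\V)$ in probability. Next I would construct a genuine $n$-particle solution $Z^k$ driven by the \emph{same} $(W^k,M^k)$ but with empirical-measure interaction, again via the Jacod-type weak-existence argument. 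Applying It\^o's formula to $\|Z^k-Y^k\|^2_\X$ in the second Gelfand triple, the weak monotonicity \eqref{eq: cond 2 (ii.2)} and the Lipschitz-type bound \eqref{eq: cond 2 (ii.3)} yield, after averaging over $k$ and a Gronwall argument,
\begin{align*}
E\Big[\tfrac{1}{n}\sum_{k=1}^n\sup_{s\in[0,T]}\|Z^k_s-Y^k_s\|^2_\X\Big] \le C\,E\Big[\int_0^T\w^\X_2(\x_n(Y_s),\mu^X_s)^2\,ds\Big]+C\|x^n-x^0\|^2_\H\to 0,
\end{align*}
which is precisely the change-of-topology step flagged in the introduction: monotonicity is available only in the weaker $\X$-norm, not in $\H$. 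Setting $Q^n:=\operatorname{Law}(\Z_n(Z^1,M^1,\dots,Z^n,M^n))$ and integrating over $\mu\sim Q^0$, this estimate combined with the relative compactness from (ii) forces $Q^{N_n}\to Q^0$ along a subsequence in $\fP^\al(\fP^\al(\Theta))$, recovering the subsequence formulation in the statement. The main obstacle is jointly arranging (a) weak existence of an $n$-particle system driven by \emph{preassigned} noise and control when the coefficients are merely continuous (rather than Lipschitz) in $(y,\mu)$, and (b) a monotonicity-based Gronwall estimate in the $\X$-norm that still controls the nonlocal empirical interaction; the Jacod-type compatibility argument settles (a) and the two-Gelfand-triple setup is what makes (b) available.

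\textbf{Parts (v)--(vii).} Part (v) is immediate from (iv), lower semicontinuity and \eqref{eq: bound property} combined with uniform $\al$-integrability of the approximating sequence. Part (vi) upgrades the pointwise bounds from (iii) and (v) to uniform-on-compacts convergence by a standard subsequence-and-contradiction argument: if \eqref{eq: compact convergence} failed, a subsequence would yield $x^{n_k}\to x^\infty\in\mathscr{K}$ with a persistent gap, contradicting (iii) and (v) applied along this subsequence at $x^\infty$; continuity of the mean-field value map then follows from the uniform convergence applied to the compact set $\{x^k:k\ge 0\}$ for any convergent sequence $x^k\to x^0$. Part (vii) is the Hausdorff-metric repackaging of (i), (ii) and (iv): compactness of both $\cR^n(x^n)$ and $\cR^0(x^0)$, together with the upper-hemicontinuity delivered by (ii) and the lower-hemicontinuity delivered by (iv), give uniform Hausdorff convergence on compacts through the same subsequence argument, which also yields continuity of $x\mapsto\cR^0(x)$.
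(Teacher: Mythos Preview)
Your strategy for (i)--(iii) and (v)--(vii) matches the paper's closely, and your coupling/Gronwall argument in the $\X$-norm for (iv) is exactly the content of the paper's key Lemma~\ref{lem: approx point mass}. However, that lemma only handles the case $Q^0=\delta_P$ for a single $P\in\cC^0(x^0)$, and your extension to general $Q^0\in\cR^0(x^0)$ via ``integrating over $\mu\sim Q^0$'' has a genuine gap. To produce a single element of $\cR^n(x^n)=\{Q\circ\Z_n^{-1}:Q\in\cC^n(x^n)\}$ approximating a non-Dirac $Q^0$, you would need a \emph{measurable} selection $\mu\mapsto P^n_\mu\in\cC^n(x^n)$ so that the mixture $\int P^n_\mu\,Q^0(d\mu)$ again lies in $\cC^n(x^n)$ and its pushforward under $\Z_n$ converges to $Q^0$. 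The Jacod-type weak-existence construction you invoke proceeds by extracting weak-strong convergent subsequences and gives no measurable dependence on the input $\mu$; even granting measurability, the law-of-large-numbers rate in your Gronwall bound depends on $\mu$, so pushing the limit under the $Q^0$-integral still needs a uniformity or domination argument you have not supplied.

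The paper bypasses all of this with a Krein--Milman reduction that you have not mentioned: $\cR^0(x^0)$ is compact and convex in $\fP^\al(\fP^\al(\Theta))$, hence equals the closed convex hull of its extreme points, and by Winkler's characterization of extreme points of moment-constrained sets \cite{winkler} these extreme points are supported on finitely many points of $\cC^0(x^0)$. Consequently every $Q^0$ is the $\fP^\al$-limit of \emph{finite} convex combinations $\sum_k a^n_k\,\delta_{P_{n,k}}$ with $P_{n,k}\in\cC^0(x^0)$. Your coupling argument (i.e., Lemma~\ref{lem: approx point mass}) approximates each Dirac $\delta_{P_{n,k}}$ by some $Q^m_{n,k}\in\cR^m(x^m)$; convexity of $\cC^m(x^m)$ (Corollary~\ref{coro: conv}) then lets one recombine these finitely many approximants into a single element of $\cR^m(x^m)$, and a diagonal argument in $m$ finishes. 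This Krein--Milman step is the missing idea in your sketch of~(iv).
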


\begin{remark}
	\begin{enumerate}
		\item[\textup{(i)}]
		Part (i) of Theorem~\ref{theo: main1} (actually this follows from part (ii)) includes an existence result for certain mean field controlled stochastic PDEs. In particular, taking \(F\) as singleton, this includes an existence result for usual (in the sense of uncontrolled) McKean--Vlasov stochastic PDEs, comparable to an existence result from \cite{HLL23}. 
		
		\item[\textup{(ii)}]
		Part (ii) of Theorem~\ref{theo: main1} provides a particle approximation for controlled equations. For uncontrolled situations, this is comparable to a propagation of chaos result from \cite{HLL23} for the stochastic 2D Navier--Stokes equation, which is covered by our result, see part (iv) of this remark. 
		\item[\textup{(iii)}]
		The items (iii), (v) and (vi) from Theorem~\ref{theo: main1} are related to stochastic optimal control theory, as they provide some understanding of mean field control problems and their approximations. We will discuss more details in Corollary~\ref{coro: control} below.
		
		The flavor of part (vii) from Theorem~\ref{theo: main1} is probabilistic. It can be viewed as {\em set-valued propagation of chaos}. To the best of our knowledge, this is a new concept in the literature on mean field control.
				\item[\textup{(iv)}] Let us also comment on our main assumptions.
		Condition~\ref{cond: main1} is rather mild and heavily inspired by (C1) -- (C3) from \cite{GRZ}. To give some benchmarks, it is satisfied for stochastic porous media equations (\cite[Section~5]{GRZ}) and Navier--Stokes equations on bounded domains (\cite[Section~6]{GRZ}). 
		
		Condition~\ref{cond: main2} is a type of growth and weak monotonicity condition that appears frequently in the literature (\cite{krylov_rozovskii,liu_rockner,par}). For example, it can be verified for stochastic porous media equations (\cite[Example~4.1.11]{liu_rockner}). However, it fails for stochastic Navier--Stokes equations which lack weak monotonicity (\cite[Section~5.1.3]{liu_rockner}). 
		
		In Section~\ref{sec: ex} below, we show that both Conditions~\ref{cond: main1} and \ref{cond: main2} hold for certain controlled slow diffusion equations. 
		
		In finite dimensional cases, the Conditions~\ref{cond: main1} and \ref{cond: main2} correspond to classical growth and monotonicity conditions. In fact, in many standard finite dimensional settings, monotonicity conditions are weaker than their Lipschitz counterparts, meaning that Theorem~\ref{theo: main1} covers some new cases even for finite dimensional frameworks (cf. \cite[Assumption B]{LakSIAM17}).
		
			It is an interesting open problem to weaken our monotonicity conditions to local versions as introduced in \cite{liu_r:JFA} for classical variational SPDE frameworks. This appears to be challenging, since localization techniques do not always interact well with McKean--Vlasov equations, whose dependence on the distribution is inherently nonlocal.     In particular, the localization-based uniqueness argument of \cite{liu_r:JFA} does not transfer straightforwardly to the McKean--Vlasov setting under local monotonicity conditions.
		 While in finite dimensional settings such issues may sometimes be overcome by Lyapunov function techniques (see \cite{HSS_21, HHL_24}), to the best of our knowledge no general tools are currently available in our infinite dimensional setting.
	\end{enumerate}
\end{remark}

The next corollary is related to \cite[Theorems 2.11, 2.12]{LakSIAM17}, which are limit theorems in the context of stochastic optimal control. 
The first part shows that accumulation points of \(n\)-state nearly optimal controls are optimal for the McKean--Vlasov system, while the second part explains that every optimal McKean--Vlasov control can be obtained as limit of \(n\)-state nearly optimal controls.

\begin{corollary} \label{coro: control}
	Suppose that the Conditions~\ref{cond: main1} and \ref{cond: main2} hold, take a continuous function \[\psi \colon \fP^\al (\Theta)\to \bR\] with the property \eqref{eq: bound property} and an initial value \(x \in \H\).
	\begin{enumerate}
		\item[\textup{(i)}] Let \((\varepsilon_n)_{n = 1}^\infty \subset \bR_+\) be a sequence such that \(\varepsilon_n \to 0\). For \(n \in \mathbb{N}\), suppose that \(Q^n \in \cR^n (x)\) is such that 
					\[
		\sup_{Q \in \cR^n (x)} E^{Q} \big[ \psi \big] - \varepsilon^n \leq E^{Q^n} \big[ \psi \big].
		\]
		In other words, \(Q^n\) is a so-called {\em \(n\)-state \(\varepsilon_n\)-optimal control}.
		Then, the sequence \((Q^n)_{n = 1}^\infty\) is relatively compact in \(\fP^\al (\fP^\al(\Theta))\) and every accumulation point \(Q^0\) is in \(\cR^0 (x)\) and optimal in the sense that 
		\begin{align} \label{eq: def opti}
		E^{Q^0} \big[ \psi \big] = \sup_{Q \in \cR^0 (x)} E^Q \big[ \psi \big].
		\end{align}
		\item[\textup{(ii)}] 
		Take a measure \(Q^0 \in \cR^0 (x)\) that is optimal (i.e., it satisfies \eqref{eq: def opti}). Then, there are sequences \((\varepsilon_n)_{n = 1}^\infty \subset \bR_+\) and \((Q^n)_{n = 1}^\infty \subset \fP^\al (\fP^\al(\Theta))\) such that \(\varepsilon_n \to 0\), each \(Q^n\) is an \(n\)-state \(\varepsilon_n\)-optimal control and \(Q^n \to Q^0\) in \(\fP^\al (\fP^\al(\Theta))\).
	\end{enumerate}
\end{corollary}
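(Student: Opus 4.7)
The plan is to deduce both statements essentially from Theorem~\ref{theo: main1} applied to the constant sequence $x^n \equiv x$, once we record the standard Wasserstein fact that a continuous function $\psi \colon \fP^\al (\Theta) \to \bR$ satisfying \eqref{eq: bound property} integrates continuously on $\fP^\al (\fP^\al (\Theta))$. This last fact is the usual characterization of $p$-Wasserstein convergence ($Q^n \to Q^0$ in $\fP^\al$ iff weak convergence plus convergence of $\al$-moments), combined with a Vitali-type argument which upgrades continuity to continuity of integration for integrands with polynomial growth of order $\al$ (see, e.g., \cite[Theorem~6.9]{villani09}). This will be the one non-Theorem~\ref{theo: main1} ingredient I need.

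For part (i), Theorem~\ref{theo: main1}(ii) immediately gives that $(Q^n)_{n=1}^\infty$ is relatively compact in $\fP^\al (\fP^\al (\Theta))$ and every accumulation point lies in $\cR^0 (x)$, so it remains to verify optimality. Fix an accumulation point $Q^0$ with $Q^{n_k} \to Q^0$ in $\fP^\al (\fP^\al (\Theta))$. The integration continuity above yields $E^{Q^{n_k}}[\psi] \to E^{Q^0}[\psi]$, while Theorem~\ref{theo: main1}(vi) applied to the compact set $\mathscr{K} = \{x\}$ yields $\sup_{Q \in \cR^n (x)} E^Q [\psi] \to \sup_{Q \in \cR^0 (x)} E^Q [\psi]$. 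Taking $k \to \infty$ in the near-optimality bound $\sup_{Q \in \cR^{n_k}(x)} E^Q [\psi] - \varepsilon_{n_k} \leq E^{Q^{n_k}}[\psi]$ then gives $\sup_{Q \in \cR^0 (x)} E^Q [\psi] \leq E^{Q^0}[\psi]$; the reverse inequality is trivial because $Q^0 \in \cR^0 (x)$, proving \eqref{eq: def opti}.

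For part (ii), given an optimal $Q^0 \in \cR^0 (x)$, I apply Theorem~\ref{theo: main1}(iv) to the constant sequence $x^n \equiv x$ with $M_n = n$ to extract $Q^n \in \cR^n (x)$ with $Q^n \to Q^0$ in $\fP^\al (\fP^\al (\Theta))$. Define the suboptimality gap
\[
\varepsilon_n := \sup_{Q \in \cR^n (x)} E^Q [\psi] - E^{Q^n}[\psi] \geq 0,
\]
so that each $Q^n$ is, by construction, an $n$-state $\varepsilon_n$-optimal control. Integration continuity gives $E^{Q^n}[\psi] \to E^{Q^0}[\psi] = \sup_{Q \in \cR^0 (x)} E^Q [\psi]$, and Theorem~\ref{theo: main1}(vi) gives $\sup_{Q \in \cR^n (x)} E^Q [\psi] \to \sup_{Q \in \cR^0 (x)} E^Q [\psi]$; subtracting yields $\varepsilon_n \to 0$.

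The proof is essentially a packaging exercise, and I do not expect any real obstacle beyond being careful about topologies: in particular, the $\fP^\al$-continuity of $\nu \mapsto \int \psi \, d\nu$ (rather than mere weak continuity) is what makes the growth bound \eqref{eq: bound property} admissible and what glues parts (iv) and (vi) of Theorem~\ref{theo: main1} to optimality of the limit. As an alternative that avoids invoking (vi) at all, one could replace the integration-continuity step by the one-sided estimates (iii) and (v) applied directly to $\psi$ and $-\psi$, since both bounds hold for continuous $\psi$ with the growth property; this would give the same conclusions with marginally less work.
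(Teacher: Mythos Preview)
Your proposal is correct and follows essentially the same route as the paper's proof: both parts combine Theorem~\ref{theo: main1}(ii), (iv), (vi) with the standard Wasserstein fact that $\nu \mapsto \int \psi\, d\nu$ is continuous on $\fP^\al(\fP^\al(\Theta))$ for continuous $\psi$ satisfying \eqref{eq: bound property} (the paper cites \cite[Definition~6.8]{villani09} for this). The only cosmetic difference is that in part~(i) the paper first sandwiches $E^{Q^n}[\psi]$ between $\sup_{\cR^n(x)}E^Q[\psi]-\varepsilon_n$ and $\sup_{\cR^n(x)}E^Q[\psi]$ to get full-sequence convergence of $E^{Q^n}[\psi]$, whereas you pass to the limit directly along the convergent subsequence; the ingredients and logic are otherwise identical.
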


\begin{proof}
	(i). By Theorem \ref{theo: main1} (vi), we have 
					\[
\sup_{Q \in \cR^0 (x)} E^Q \big[ \psi \big] \leftarrow \sup_{Q \in \cR^n (x)} E^{Q} \big[ \psi \big] - \varepsilon^n \leq E^{Q^n} \big[ \psi \big] \leq \sup_{Q \in \cR^n (x)} E^Q \big[ \psi \big] \to \sup_{Q \in \cR^0 (x)} E^Q \big[ \psi \big],
\]
which shows that 
	\[
	\lim_{n \to \infty} E^{Q^n} \big[ \psi \big] = \sup_{Q \in \cR^0 (x)} E^Q \big[ \psi \big].
	\]
	By part (ii) of Theorem \ref{theo: main1}, \((Q^n)_{n = 1}^\infty\) is relatively compact in \(\fP^\al (\fP^\al (\Theta))\) and every accumulation point \(Q^0\) is in \(\cR^0 (x)\).
	Thus, we get that
	\[
	E^{Q^0} \big[ \psi \big] = \lim_{n \to \infty} E^{Q^n} \big[ \psi \big] = \sup_{Q \in \cR^0 (x)} E^Q \big[ \psi \big].
	\]
	This is the claim.
	
	(ii). By Theorem \ref{theo: main1} (iv), there exists a sequence \((Q^n)_{n = 1}^\infty\) such that \(Q^n \in \cR^n (x)\) and \(Q^n \to Q^0\) in \(\fP^\al (\fP^\al (\Theta))\). Using that \(Q^0\) is optimal and Theorem~\ref{theo: main1}~(vi), we get that
	\[
	\lim_{n \to \infty} \sup_{Q \in \cR^n (x)} E^{Q} \big[ \psi \big] = E^{Q^0} \big[ \psi \big] = \lim_{n \to \infty} E^{Q^n} \big[ \psi \big].
	\]
	Consequently, 
	\[
	0 \leq \varepsilon^n := \sup_{Q \in \cR^n (x)} E^{Q} \big[ \psi \big] - E^{Q^n} \big[ \psi \big]\to 0,
	\]
	which shows that \(Q^n\) is an \(n\)-state \(\varepsilon^n\)-optimal control. The claim is proved.
\end{proof}

\section{Example: Mean Field Control for Porous Media Models} \label{sec: ex}
In this section we verify the Conditions~\ref{cond: main1} and \ref{cond: main2} for interacting controlled stochastic porous media systems of the form
\[
d Y^k_t = \Big[ \Delta ( |Y^k_t|^{q - 2} Y^k_t) + \frac{1}{N} \sum_{i = 1}^N \, (Y^k_t - Y^i_t)+ \int c\, (f)\, \v (t, M^k, df) \Big] \, dt + \sigma \, d W^k_t,
\]
where \(q \geq 2\) and \(k = 1, \dots, N\). For \(q = 2\) this equation corresponds to the stochastic heat equation and the cases \(q > 2\) are called slow diffusion models. We refer to the monograph~\cite{BPR} for more information about (stochastic) porous media equations. 

Let us introduce our precise setting for this system.
Let \(d \in \mathbb{N}\) be a fixed dimension and let \(\mathcal{O} \subset \bR^d\) be a bounded domain with smooth boundary. For \(k \in \mathbb{N}\) and \(p > 1\), let \(W^{k, p}_0 (\mathcal{O})\) be the usual Sobolev space on \(\mathcal{O}\) with Dirichlet boundary conditions. The dual space of \(W^{k, p}_0 (\mathcal{O})\) can be identified with \(W^{-k, p'} (\mathcal{O})\), where \(p' = p / (p - 1)\) (\cite[Theorem~3.12]{adams} or \cite[Section~III.1]{krylov_rozovskii}). In this way, the dualization between \(W^{k, p}_0 (\mathcal{O})\) and \(W^{-k, p'} (\mathcal{O})\) is defined by means of the usual scalar product in \(L^2 (\mathcal{O})\). 
In the following, we take some \(q \geq 2\) and 
\begin{alignat*}{3}
	\Y &:= L^q (\mathcal{O}), \qquad \qquad &&\H:= L^2 (\mathcal{O}), \qquad \qquad &&\X := W^{-1, 2} (\mathcal{O}), 
	\\
	\V &:= W^{- d - 2, 2} (\mathcal{O}), && \V^* := W^{d + 2, 2}_0 (\mathcal{O}). &&
\end{alignat*}
Further, we set 
\begin{align*}
\N (y) := \begin{cases} \int_{\mathcal{O}} \big| \nabla ( |y(u)|^{q/2 - 1} y (u) ) \big|^2 \, du, & \text{ if } |y|^{q/2 - 1} y \in W^{1, 2}_0 (\mathcal{O}), \\
		+ \infty, &\text{ otherwise}.
		\end{cases}
\end{align*}
Thanks to \cite[Lemma~5.1]{GRZ}, for \(\alpha := q\), the function \(\N\) is as in the previous section, i.e., it has all assumed properties.
The action space \(F\) is still a compact metrizable space. 

Next, we introduce the coefficients \(b\) and \(\sigma\). 
We set 
\[
b (f, s, x, \mu) \equiv b (f, x, \mu) (\,\cdot\,) := \Delta (|x (\,\cdot\,)|^{q - 2} x (\,\cdot\,)) + \int (x (\,\cdot\,) - z (\,\cdot\,)) \, \mu(dz) + c (f) (\,\cdot\,), 
\]
where \(F \times \bR^d \ni (f, u) \mapsto c (f) (u)\) is a bounded continuous function. Further, for some separable Hilbert space \(\U\), let \(\sigma \in L_2 (\U; \H)\) be a constant Hilbert--Schmidt coefficient.
\begin{lemma}
	In the above setting, Condition~\ref{cond: main1} holds for \(\gamma = q / (q - 1)\), large enough \(\lambda\) and all \(\beta, \eta\) that satisfy~\eqref{eq: cond constants}.
\end{lemma}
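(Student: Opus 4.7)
The plan is to verify the three items of Condition~\ref{cond: main1} in turn, using standard Sobolev embeddings on the bounded smooth domain $\mathcal{O}$ together with the classical porous-media energy identity. For item (i), since $d+2$ exceeds $d/2$ the Sobolev embedding gives $\V^* = W^{d+2,2}_0(\mathcal{O}) \hookrightarrow C(\overline{\mathcal{O}})$, compactly on a bounded Lipschitz domain; composing with $C(\overline{\mathcal{O}}) \hookrightarrow L^q(\mathcal{O}) = \Y$ yields the compact embedding $\V^* \hookrightarrow \Y$.

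For item (ii), I would fix $v \in \V^*$ and proceed term by term. An integration by parts identifies ${_\V}\langle \Delta(|y|^{q-2}y), v\rangle_{\V^*}$ with $\int_\mathcal{O} |y|^{q-2}y(u)\,\Delta v(u)\,du$, which is continuous on $L^q = \Y$ because $y \mapsto |y|^{q-2}y$ is continuous from $L^q$ to $L^{q/(q-1)}$ and $\Delta v \in L^\infty(\mathcal{O})$ (from $v \in W^{d+2,2}_0$). The mean-field contribution $\mu \mapsto \int \langle y - z, v\rangle_\H\,\mu(dz)$ needs more care: the map $z \mapsto \langle z, v\rangle_\H$ is continuous on $\V$ (since $v \in \V^* \subset \H$) but unbounded, so the plan is to combine the uniform $2\eta$-moment bound ($2\eta > 2$) with convergence in the $\w^\V_\al$-topology and invoke a standard uniform-integrability argument to pass to the limit in first $\H$-moments. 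Continuity of $c$ handles the $c(f)$-term, and since $\sigma$ is constant the claims for $\sigma^* v$ and $\us^* v$ are trivial; boundedness on products of compact sets follows from the same estimates.

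For item (iii), the heart of the verification is the coercivity bound \eqref{eq: cond coe}. For $w \in \V^* \subset L^\infty(\mathcal{O})$, Green's formula together with the chain rule $\nabla(|w|^{q/2-1}w) = \tfrac{q}{2}|w|^{q/2-1}\nabla w$ gives
\[
\langle \Delta(|w|^{q-2}w), w\rangle_\H = -(q-1)\int_\mathcal{O} |w|^{q-2}|\nabla w|^2\,du = -\tfrac{4(q-1)}{q^2}\,\N(w),
\]
while the mean-field and $c(f)$ contributions are bounded by $C(1 + \|w\|^2_\H + \|\mu\|^2_{2,\H})$ via Cauchy--Schwarz and boundedness of $c$. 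After rescaling $\N$ by the factor $4(q-1)/q^2$ (which preserves every structural property required of $\N$, all of which are scaling-invariant) and choosing $\lambda$ large enough, \eqref{eq: cond coe} follows. For the drift growth \eqref{eq: cond growth drift} with $\gamma = q/(q-1)$, I would bound $\|\Delta(|v|^{q-2}v)\|_\V \leq C\,\||v|^{q-2}v\|_{L^{q/(q-1)}} = C\,\|v\|^{q-1}_{L^q}$ by continuity of $\Delta \colon L^{q/(q-1)} \to W^{-2,q/(q-1)} \hookrightarrow \V$, raise to the $\gamma$-th power, and apply the Poincaré--Sobolev inequality $\|v\|^q_{L^q} \leq C\,\N(v)$ (which follows from applying Poincaré to the $W^{1,2}_0$-function $|v|^{q/2-1}v$) to dominate the result by $C(1+\N(v))(1+\|v\|^\beta_\H)$. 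The mean-field and $c(f)$ parts of $\|b\|^\gamma_\V$ are routine, and since $\sigma$ is constant both \eqref{eq: cond growth diffusion} and the $\sigma$-part of \eqref{eq: cond growth drift} are immediate.

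The only delicate point is coupling the coercivity identity with the matching growth bound through the porous-media chain rule, which produces the precise constant $4(q-1)/q^2$ and must be consistently tracked in the definition of $\N$; this chain-rule calculation is essentially the content of \cite[Lemma~5.1]{GRZ} (already cited in the excerpt for the scaling properties of $\N$), so in practice I would invoke that lemma rather than redo it. Everything else is a routine application of Sobolev embedding theory on a bounded smooth domain.
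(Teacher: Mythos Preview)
Your proposal is correct and follows the same overall strategy as the paper: verify items (i)--(iii) of Condition~\ref{cond: main1} term by term, treating the porous-media operator, the mean-field interaction, and the bounded control input separately. The main difference is packaging: the paper outsources the porous-media estimates (the integration-by-parts identity, the coercivity computation producing~\(\N\), and the \(\V\)-norm growth bound for \(\Delta(|v|^{q-2}v)\)) to \cite[Lemma~5.2]{GRZ} in a single citation, while you redo these calculations explicitly via the chain rule and Poincar\'e--Sobolev. Your observation about the constant \(4(q-1)/q^2\) and the need to absorb it into the definition of~\(\N\) is exactly the kind of bookkeeping hidden inside that citation.

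One simplification worth noting: for the mean-field continuity in item~(ii) you invoke a uniform-integrability argument coupling the \(2\eta\)-moment bound with \(\w^\V_\al\)-convergence, but this is heavier than necessary. The paper observes that \(z\mapsto {_\V}\langle z,v\rangle_{\V^*}\) is continuous on~\(\V\) with the linear bound \(|{_\V}\langle z,v\rangle_{\V^*}|\leq \|z\|_\V\|v\|_{\V^*}\); since \(\al\geq 1\), convergence in \(\w^\V_\al\) already integrates functions of at most linear \(\V\)-growth, so the limit passes directly without any appeal to higher moments.
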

\begin{proof}
	We have \(\V^* \subset \Y\) compactly by \cite[Theorem~6.3]{adams}, i.e., (i) holds. Moreover, except for the measure dependent component \(
	- \int z \mu (dz),
	\)
	(ii) and (iii) from Condition~\ref{cond: main1} follow from \cite[Lemma~5.2]{GRZ}. In the following, we comment on the measure component. 
	
	Take \(v \in \V^*\) and a sequence \((\mu^n)_{n = 0}^\infty \subset \fP^{2 \eta}_\al (\H)\) such that \(\mu^n \to \mu^0\)  in \(\fP^{2 \eta}_\al (\H)\). 
	By the continuity of the map \(z \mapsto {_\V} \langle z, v \rangle_{\V^*}\) from \(\V\) into \(\bR\), and the fact that \(| {_\V} \langle z, v \rangle_{\V^*} | \leq \|z\|_\V \| v \|_{\V^*}\), we deduce from \(\mu^n \to \mu^0\) in \(\fP^{2 \eta}_\al (\H)\) that
	\begin{align*}
	\blv \Big \langle \int  z \,  \mu^n (dz) - \int  z  \, \mu^0 (dz), v \Big \rangle_{\V^*} = \int  {_\V}\langle z, v \rangle_{\V^*} \, ( \mu^n (dz) -\mu^0 (dz)) \to 0,
	\end{align*}
which shows that Condition~\ref{cond: main1}~(ii) holds. 

Next, we discuss Condition~\ref{cond: main1}~(iii). Take \(v \in \V^*, w \in \Y\) and \(\mu \in \fP^{2 \eta}_\al (\H)\). First, notice that 
\begin{align*}
	  - \blv \Big \langle \int  z \, \mu (dz), w \Big \rangle_{\V^*} &= - \int \langle z, w \rangle_\H \, \mu (dz) \leq \|w\|_\H \| \mu \|_{1, \H} \leq \tfrac{1}{2} \big( \|w\|^2_\H + \|\mu\|^2_{2, \H} \big).
\end{align*}
We conclude that \eqref{eq: cond coe} holds (where the term \(- \N\) comes from the non-measure dependent part of the coefficient \(b\), cf. \cite[Lemma~5.2]{GRZ}).
Finally, notice that 
\[
\Big\| \int z \, \mu (dz) \Big\|^\gamma_\V \leq \Big( \int \|z\|_\V \, \mu (dz) \Big)^{\gamma} \leq C \|\mu\|_{1, \H}^\gamma \leq C\|\mu\|_{2, \H}^\gamma,
\]
i.e., we conclude \eqref{eq: cond growth drift}. This completes the proof. 
\end{proof}

Next, we also discuss Condition~\ref{cond: main2}. In the following, we consider the Gelfand triple
\[
\Y = L^q (\mathcal{O}) \subset \X = W^{-1, 2}(\mathcal{O}) \, \cong \, \X^* = W^{1, 2}_0 (\mathcal{O}) \subset \Y^* = (L^q (\mathcal{O}))^*, 
\]
where we understand \(\,\cong\,\) via the Riesz map \(\mathcal{R} := (- \Delta)^{-1}\) as discussed in \cite[Lemma~4.1.12]{liu_rockner}. We also recall the well-known fact (\cite[Lemma~4.1.13]{liu_rockner}) that \(\Delta\) extends to a linear isometry from \(L^{q / (q - 1)} (\mathcal{O})\) into \(\Y^*\). 

\begin{lemma}
In the above setting, Condition~\ref{cond: main2} holds.
\end{lemma}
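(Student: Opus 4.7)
The strategy is to decompose the drift as $b(f, y, \mu) = \Delta(|y|^{q-2} y) + (y - \bar\mu) + c(f)$, where $\bar\mu := \int z\, \mu(dz) \in \H$ denotes the barycentre, and to verify each clause of Condition~\ref{cond: main2} component by component. The diffusion bound~\eqref{eq: cond 2 (ii.3)} is immediate: since $\sigma$ is a constant Hilbert--Schmidt operator, $\us \equiv \sigma$ does not depend on $(y, \mu)$, so the left-hand side vanishes identically.

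For clause (i) and the growth bound~\eqref{eq: cond 2 (ii.1)} with exponent $\alpha/(\alpha-1) = q/(q-1)$, I will invoke the isometric extension $\Delta \colon L^{q/(q-1)}(\mathcal O) \to \Y^*$ from \cite[Lemma~4.1.13]{liu_rockner}: since $|y|^{q-2} y \in L^{q/(q-1)}(\mathcal O)$ with norm $\|y\|_\Y^{q-1}$, the porous media piece lies in $\Y^*$ and satisfies $\|\Delta(|y|^{q-2}y)\|_{\Y^*}^{q/(q-1)} = \|y\|_\Y^q$, which is dominated by $C(1+\mathcal N(y))$ after applying the Poincaré inequality to $|y|^{q/2-1}y \in W^{1,2}_0(\mathcal O)$. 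The interaction piece $y - \bar\mu$ sits in $\H \subset \X \subset \Y^*$, with $\Y^*$-norm controlled by $C(\|y\|_\H + \|\mu\|_{1,\H})$; since $q/(q-1) \leq 2 \leq \beta$ by~\eqref{eq: cond constants}, the resulting estimate fits under the required right-hand side. The control term $c(f)$ is uniformly bounded and contributes a constant.

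The main task is the weak monotonicity~\eqref{eq: cond 2 (ii.2)}. I will write $b(f,s,y,\mu) - b(f,s,v,\mu^*) = \Delta(|y|^{q-2}y - |v|^{q-2}v) + (y - v) - (\bar\mu - \bar{\mu^*})$ (the $c(f)$ term cancels) and pair with $y - v$ in the Gelfand triple $\Y \subset \X \cong \X^* \subset \Y^*$. The porous media contribution collapses, via \cite[Example~4.1.11, Lemma~4.1.13]{liu_rockner}, to $-\int_{\mathcal O}(|y|^{q-2}y - |v|^{q-2}v)(y-v)\, du \leq 0$ by the strict monotonicity of $u \mapsto |u|^{q-2}u$ on $\bR$. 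The interaction contribution is $\langle (y-v) - (\bar\mu - \bar{\mu^*}),\, y - v\rangle_\X \leq \tfrac{3}{2}\|y-v\|_\X^2 + \tfrac{1}{2}\|\bar\mu - \bar{\mu^*}\|_\X^2$ by Young's inequality, and Jensen applied to any optimal $\w_2^\X$-coupling of $\mu, \mu^*$ gives $\|\bar\mu - \bar{\mu^*}\|_\X \leq \w_2^\X(\mu, \mu^*)$. Combining, one obtains~\eqref{eq: cond 2 (ii.2)}.

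The main obstacle is the book-keeping inside the Gelfand triple: because the paper uses the Riesz map $\mathcal R = (-\Delta)^{-1}$ rather than the $L^2$ pivot, it is not a priori obvious that ${_{\Y^*}}\langle \Delta f, g\rangle_\Y = -\int_{\mathcal O} f g\, du$ for $f \in L^{q/(q-1)}(\mathcal O)$ and $g \in L^q(\mathcal O)$. This identification is precisely what \cite[Lemma~4.1.13]{liu_rockner} provides, and once it is in place the rest of the verification is a routine combination of pointwise monotonicity, Jensen, and Young.
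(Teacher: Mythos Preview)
Your proposal is correct and follows essentially the same route as the paper: both decompose $b$ into porous media, interaction, and control pieces, use the isometry from \cite[Lemma~4.1.13]{liu_rockner} to identify ${_{\Y^*}}\langle \Delta(|y|^{q-2}y - |v|^{q-2}v), y-v\rangle_\Y$ with the pointwise monotone integral, and bound the interaction term via Cauchy--Schwarz/Young plus a coupling estimate for $\|\bar\mu - \bar{\mu^*}\|_\X$. The only cosmetic differences are that the paper bounds $\|y\|_\Y^q$ by $C(\N(y) + \|y\|_\V^q)$ via \cite[Lemma~C.1]{GRZ} rather than your direct Poincar\'e argument (which in fact yields the sharper $\|y\|_\Y^q \leq C\N(y)$), and the paper takes an optimal $\w_1^\X$-coupling and then upgrades via $\w_1^\X \leq \w_2^\X$, whereas you work with a $\w_2^\X$-coupling from the start.
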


\begin{proof}
	Notice that \(x \in \Y = L^q (\mathcal{O})\) implies that \(|x|^{q - 2} x \in L^{q / (q - 1)} (\mathcal{O})\).
	Hence, as \(\Delta\) extends from \(L^{q / (q - 1)} (\mathcal{O})\) into \(\Y^*\), we get that \(\Delta (|x|^{q - 2} x) \in \Y^*\) for all \(x \in \Y\). This implies that Condition~\ref{cond: main2}~(i) holds. 

	Next, we discuss Condition~\ref{cond: main2}~(ii). Take \(x \in \Y\) and recall that \(\alpha = q\). 
	Thanks to \cite[Lemma~C.1]{GRZ} (or see \eqref{eq: C1} below, where the used inequality is restated), we obtain that
	\begin{align*}
		\| \Delta (|x|^{q - 2} x) \|^{q / (q - 1)}_{\Y^*} &= \int_\mathcal{O} |x (u)|^{q} du = \|x\|_\Y^q \leq C \big( \N (x) + \|x\|^q_\V \big) \leq C \big( \N (x) + \|x\|^q_\H \big). 
	\end{align*}
Further, using that \(\H \subset \X \subset \Y^*\), we get that 
\begin{align*}
	\Big\| x - \int z \, \mu(dz) + c (f) \Big\|_{\Y^*}^{q / (q - 1)} 
	&\leq C\, \Big\| x - \int z \, \mu(dz) + c (f) \Big\|_{\H}^{q / (q - 1)}
	\\&\leq C\, \big(\|x\|^{q / (q - 1)}_\H + \|\mu\|_{2, \H}^{q/ (q - 1)} + 1 \big).
\end{align*}
Therefore, \eqref{eq: cond 2 (ii.1)} holds. 

It is easy to check that \((|t|^{q - 2} t - |s|^{q - 2} s)(t - s) \geq 0\) for all \(s, t \in \bR\).
Hence, for \(y, v \in \Y\), we obtain that
\begin{align*}
	{_{\Y}} \langle y - v&,  \Delta (|y|^{q - 2} y) - \Delta (|v|^{q- 1} v) \rangle_{\Y^*} 
	\\&= - \int_{\mathcal{O}} \big( |y (u)|^{q - 2} y (u) - |v (u)|^{q - 2} v (u)\big) \big( y(u) - v (u) \big) du 
	\leq 0.
\end{align*}
Take \(\mu, \nu \in \fP^{2 \eta}_\al (\H)\) and recall that the Wasserstein metric can be realized by some optimal coupling (\cite[part I, p. 353]{car_della}), i.e., there exists a coupling \(\pi\) of \(\mu\) and \(\nu\) such that 
\[
\w^{\X}_1 (\mu, \nu) = \int \|z - w\|_\X \, \pi(dz, dw).
\]
Consequently, we observe that 
\begin{align*}
	\Big\| \int z \, \mu (dz) - \int z \, \nu (dz) \Big\|_\X & = \Big\| \iint z \,\pi (dz, dw) - \iint w \, \pi (dz, dw) \Big\|_\X
	\\&\leq \iint \|z - w \|_\X \, \pi (dz, dw)
	= \w^{\X}_1 (\mu, \nu). 
\end{align*}
Using this observation, we get
\begin{equation*} 
	\begin{split}
	\mathop{\vphantom{\int}}\nolimits_{\Y}\hspace{-0.1cm} \Big \langle y - v, y - v + \int z \, \mu (dz) - \int z \, \nu(dz) \Big\rangle_{\Y^*}
	&= \Big \langle y - v,y - v + \int z \, \mu (dz) - \int z \, \nu(dz) \Big \rangle_\X
	\\&\leq \| y - v \|_\X\,  \Big\| y - v + \int z \, \mu (dz) - \int z \, \nu(dz)\Big\|_\X 
	\\&\leq \|y - v \|_\X^2 + \|y - v\|_\X^2 + \w^\X_1 (\mu, \nu)^2 \phantom \int 
	\\&\leq 2 \|y - v\|^2_\X + \w^\X_2 (\mu, \nu)^2. \phantom \int 
\end{split} 
\end{equation*}
All together, we conclude that \eqref{eq: cond 2 (ii.2)} holds. 

Finally, as \(\sigma\) is constant, \eqref{eq: cond 2 (ii.3)} holds trivially and the proof is complete.
\end{proof}

	\section{Proof of Theorem \ref{theo: main1}} \label{sec: pf}

	In this section we prove our main Theorem \ref{theo: main1}. Let us shortly comment on its structure. In Section~\ref{sec: mp}, we start with a martingale problem characterizations for the sets \(\cC^0\) and~\(\cC^n\), in Section~\ref{sec: compactness} we investigate compactness properties, and 
	in Section~\ref{sec: existence} we provide a general existence result for stochastic PDEs with certain random coefficients. 
	In the remaining sections, we proceed with the proof of Theorem~\ref{theo: main1} in a chronological order.

\subsection{Martingale Problem Characterizations of \(\cC^0\) and \(\cC^n\)} \label{sec: mp}
Let \(\B := \{e_i \colon i \in \mathbb{N}\} \subset \V^*\) be a countable dense set.
Furthermore, suppose that \(\tC\) is a countable subset of \(C^2_c(\bR; \bR)\) that is dense for the norm \(\|f\|_\infty + \|f'\|_\infty + \|f''\|_\infty\). 
For a Polish space \(Z\) and a \(\sigma\)-field \(\mathcal{Z} \subset \mathcal{B} (Z)\), we call \(G \subset C_b (Z; \bR)\) a separating class for \(\mathcal{Z}\) if \(\int g \, d\mu = \int g \, d \nu\) implies \(\mu = \nu\) on \(\mathcal{Z}\) for all \(\mu, \nu \in \mathcal{P} (Z)\).
Finally, for \(s \in [0, T]\), let \(\tT_s \subset C_b (\Theta; \bR)\) be a countable separating class for \(\mathcal{O}_s\), whose existence follows as in the proof of \cite[Lemma A.1]{LakSIAM17}. 

We also need the following notation to define relaxed control rules. For \(g \in C^2_b (\bR; \bR), v \in \V^*\) and \((f, t, y, \mu) \in F \times [0, T] \times \Y \times \fP^{2 \eta}_\al (\H)\), we set 
\begin{align} \label{eq: def gen MF}
	\mathcal{L}_{g, v} (f, t, y, \mu) := g' ({_\V} \langle y, v \rangle_{\V^*}) {_\V}\langle b (f, t, y, \mu), v \rangle_{\V^*} 
	+ \tfrac{1}{2} g'' ({_\V}\langle y, v \rangle_{\V^*}) \| \sigma^* (f, t, y, \mu) v \|^2_\U.
\end{align}
Finally, for \(g \in C^2_b (\bR; \bR), f \in F,  v^1, \dots, v^n \in \V^*,  y^1, \dots, y^n \in \Y, \mu \in \fP^{2 \eta}_{\al} (\H)\) and \(i = 1, \dots, n\), we set 
\begin{equation} \label{eq: def gen SE} \begin{split}
	\mathcal{L}^i_{g, v^1, \dots, v^n} &(f, t, y^1, \dots, y^n, \mu) 
	\\&\hspace{-0.75cm} := 
	g' \Big( \sum_{k = 1}^n {_\V}\langle y^k, v^k \rangle_{\V^*} \Big) {_\V}\langle b (f, t, y^i, \mu), v^i \rangle_{\V^*} + \frac{1}{2}\, g'' \Big( \sum_{k = 1}^n {_\V}\langle y^k, v^k \rangle_{\V^*} \Big) \| \sigma^* (f, t, y^i, \mu) v^i \|^2_\U.
\end{split}
\end{equation} 
We are in the position to provide a martingale characterization for the sets \(\cC^0\) and \(\cC^n\).

\begin{lemma} \label{lem: mg chara}
	Suppose that Condition \ref{cond: main1} \textup{(iii)} holds. Let \(x \in \H\) and \(Q\in \fP(\Theta)\). The following are equivalent:
	\begin{enumerate}
		\item[\textup{(i)}]
		\(Q \in \cC^0 (x)\).
		\item[\textup{(ii)}] The following properties hold:
		\begin{enumerate}
	\item[\textup{(a)}]
	\(Q (X_0 = x) = 1\);
	\item[\textup{(b)}] \(Q \in \NN\); 
	\item[\textup{(c)}] 
	for all \(v \in \V^*\) and \(g \in C^2_c (\bR; \bR)\), the process 
	\[
	\mathsf{M}^{g, v} := g ({_\V}\langle X, v\rangle_{\V^*})  - \int_0^\cdot \int \mathcal{L}_{g, v} (f, s, \oX_s, Q^X_s) \, M (ds, df)
	\]
	is a (square integrable) \(Q\)-\(\mathbf{O}\)-martingale.
\end{enumerate}
		\item[\textup{(iii)}] The following properties hold:
		\begin{enumerate}
			\item[\textup{(a)}]
			\(Q (X_0 = x) = 1\);
			\item[\textup{(b)}] \(Q \in \NN\); 
			\item[\textup{(c)}] 
			for all \(v \in \mathscr{B}, g \in \tC, s, t \in \mathbb{Q}_+ \cap [0, T], s < t\) and all \(\psi \in \tT_s\), 
			\[
			E^Q \big[ (\mathsf{M}^{g, v}_t - \mathsf{M}^{g, v}_s) \psi \big] = 0.
			\]
		\end{enumerate}
	\end{enumerate}
\end{lemma}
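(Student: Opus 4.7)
The plan is to establish three implications: (i)$\Rightarrow$(ii) via It\^o's formula applied to real-valued semimartingales, (ii)$\Rightarrow$(iii) which is immediate by restriction, (iii)$\Rightarrow$(ii) via density/approximation arguments, and finally (ii)$\Rightarrow$(i) by reconstructing the driving cylindrical Brownian motion through a martingale representation argument. I anticipate that (ii)$\Rightarrow$(i) will be the main obstacle, since one has to build a Hilbert-space noise from the scalar family of martingales $\mathsf{M}^{g,y}$.

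For (i)$\Rightarrow$(ii), I would take $Q \in \cC^0(x)$ with associated cylindrical Brownian motion $W$ and, for each fixed $y\in\V^*$, note that the real-valued process $_\V\langle X_t, y\rangle_{\V^*}$ is a continuous semimartingale of the form
\[
{_\V}\langle X_t, y\rangle_{\V^*} = {_\V}\langle x, y\rangle_{\V^*} + \int_0^t \int{_\V}\langle b(f,s,X_s,Q^X_s), y\rangle_{\V^*}\, M(ds,df) + N^y_t,
\]
where $N^y$ is a continuous martingale with quadratic variation $\int_0^\cdot \|\us^*(\v(s,M),s,X_s,Q^X_s)\,y\|_\U^2\,ds$; by \eqref{eq: os} this equals $\int_0^\cdot\int \|\sigma^*(f,s,X_s,Q^X_s) y\|_\U^2 M(ds,df)$. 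The coercivity and growth estimates from Condition \ref{cond: main1}(iii) together with membership in $\NN$ ensure integrability. Applying It\^o's formula to $g({_\V}\langle X_t,y\rangle_{\V^*})$ for $g\in C^2_c(\bR;\bR)$ then gives (c); properties (a) and (b) are immediate from the definition of $\cC^0(x)$.

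For (iii)$\Rightarrow$(ii), I would proceed by three successive density arguments. First, for fixed $y\in\B$, approximating $g\in C^2_c$ by elements of $\tC$ in the $\|\cdot\|_\infty+\|\cdot'\|_\infty+\|\cdot''\|_\infty$ norm, combined with the $Q$-integrability of the processes (which follows from $Q\in\NN$ and the growth bounds), allows one to pass the martingale property through the limit; here I use that $\tT_s$ is separating on $\mathcal{O}_s$ to upgrade $E^Q[(\mathsf{M}^{g,y}_t-\mathsf{M}^{g,y}_s)\psi]=0$ for $\psi\in\tT_s$ to the same identity for all bounded $\mathcal{O}_s$-measurable $\psi$. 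Second, for fixed $g\in C^2_c$, the continuity requirements in Condition \ref{cond: main1}(ii) together with dominated convergence (using the compact support of $g'$ and $g''$) let one extend from $\B$ to all $y\in\V^*$. Third, continuity in time of the integrals and path continuity upgrade the rational time points to all $s<t$ in $[0,T]$.

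The remaining direction (ii)$\Rightarrow$(i) is the crux. Specializing (c) to a sequence $g_n\in C^2_c$ approximating the identity on growing intervals and passing to the limit identifies, for each $y\in\V^*$, a continuous square-integrable $Q$-$\mathbf{O}$-martingale
\[
\mathcal{M}^y_t := {_\V}\langle X_t,y\rangle_{\V^*}-{_\V}\langle x,y\rangle_{\V^*}-\int_0^t\int{_\V}\langle b(f,s,X_s,Q^X_s),y\rangle_{\V^*} M(ds,df),
\]
with quadratic variation $\int_0^\cdot\|\us^*(\v(s,M),s,X_s,Q^X_s)\,y\|_\U^2\,ds$, obtained by combining the martingale property for $g_n$ and for $g_n^2$ (together with a polarization argument to handle $\mathcal{M}^{y+y'}$). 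The family $\{\mathcal{M}^y\}$ depends linearly on $y\in\V^*$ and extends by density and the $\H$-valued quadratic-variation bound \eqref{eq: cond growth diffusion} to define an $\H$-valued continuous martingale with tensor quadratic variation $\int_0^\cdot \us\us^*(\v(s,M),s,X_s,Q^X_s)\,ds$. Applying the standard martingale representation theorem for Hilbert-space-valued continuous martingales (see, e.g., Theorem 8.2 of Da Prato--Zabczyk or Theorem 2.5.2 of Liu--R\"ockner), possibly on a standard extension of $(\Theta,\mathcal{O},\mathbf{O},Q)$, yields a cylindrical Brownian motion $W$ with
\[
\mathcal{M}^y_t = \Big\langle \int_0^t \us(\v(s,M),s,X_s,Q^X_s)\,dW_s,\,y\Big\rangle_\H \quad Q\text{-a.s.},
\]
which is exactly the SPDE in (ii) of Definition \ref{def: C MK}. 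Combined with (a) and (b), this delivers $Q\in\cC^0(x)$ and closes the equivalence.
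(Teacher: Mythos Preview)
Your proof is correct and follows essentially the same route as the paper. The only noteworthy difference is in the step (ii)$\Rightarrow$(i): you reconstruct the local martingale $\mathcal{M}^y$ and its quadratic variation by hand, approximating the identity and the square by sequences $g_n, g_n^2 \in C^2_c$, whereas the paper first localizes with stopping times $T_n=\inf\{t:|{_\V}\langle X_t,y\rangle_{\V^*}|\ge n\}$ to pass from $C^2_c$ to $C^2_b$ and then invokes \cite[Theorem~II.2.42]{JS}, which directly yields the continuous local martingale together with its quadratic variation from the family $\{\mathsf{M}^{h,y}:h\in C^2_b\}$. Both arguments are standard and equivalent; the paper's citation just packages your two steps into one. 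For the representation of the resulting $\H$-valued martingale as a stochastic integral against a cylindrical Brownian motion on a standard extension, the paper cites \cite[Corollary~6]{OndRep} rather than Da~Prato--Zabczyk or Liu--R\"ockner, but these results serve the same purpose here.
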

\begin{proof}
	We will prove the following implications:
	\begin{align*}
		\textup{(i)} \Rightarrow \textup{(ii)}, \quad \textup{(ii)} \Rightarrow \textup{(i)}, \quad \textup{(iii)} \Rightarrow \textup{(ii)}.
	\end{align*}
	As (ii) \(\Rightarrow\) (iii) is trivial, these complete the proof. 
	
	{\em \(\textup{(i)} \Rightarrow \textup{(ii)}\):}  Notice that (a) and (b) hold.
	We now work on a standard extension of \((\Theta, \mathcal{O}, \mathbf{O}, Q)\) and we use the notation from Definition~\ref{def: C MK}, i.e., a.s., for all \(t \in [0, T]\) and \(v \in \V^*\), 
	\begin{align*}
		{_\V}\langle X_t, v \rangle_{\V^*} = {_\V}\langle x, v\rangle_{\V^*}&+ \int_0^t \blv \Big\langle \int b (f, s, \oX_s, Q^X_s) \, \v (s, M, df), v \Big\rangle_{\V^*} ds
\\&+ \Big\langle \int_0^t \us (\v (s, M), s, \oX_s, Q^X_s) \, d W_s, v \Big \rangle_\H.
	\end{align*}
	In the following, fix \(v \in \V^*\).
	The classical It\^o formula for real-valued semimartingales yields that a.s., for all \(t \in [0, T]\), 
	\begin{align*}
		\mathsf{M}^{g, v}_t 
		&= g ({_\V} \langle X_t, v \rangle_{\V^*}) - \int_0^t \Big( g' ({_\V} \langle X_s, v \rangle_{\V^*})  \blv \Big\langle \int b (f, s, \oX_s, Q^X_s) \, \v (s, M, df), v \Big\rangle_{\V^*} 
		\\&\hspace{5cm} + \frac{g'' ({_\V}\langle X_s, v \rangle_{\V^*}) }{2} \| \us^* (\v (s, M), s, X_s, Q^X_s)v\|_\U^2 \Big) \, ds
		\\
		&= g ({_\V} \langle x, v \rangle_{\V^*}) + \int_0^t g' ( {_\V}\langle X_s, v\rangle_{\V^*} ) \langle \us^* (\v (s, M), s, \oX_s, Q^X_s) v, d W_s \rangle_\U.
	\end{align*}
	Denoting the quadratic variation process by \([\,\cdot\, , \, \cdot\,]\), we obtain that a.s.
	\begin{align*}
		\big[ \mathsf{M}^{g, v}, \mathsf{M}^{g, v} \big]_T = \int_0^T \big( g' ({_\V} \langle X_s, v\rangle_{\V^*})\big)^2 \|\us^* (\v (s, M), s, \oX_s, Q^X_s) v\|^2_\U \, ds.
	\end{align*}
	Using the growth bound \eqref{eq: cond growth diffusion} and that \(Q \in \NN\), we obtain that 
	\[
	E^Q \Big[ \big[ \mathsf{M}^{g, v}, \mathsf{M}^{g, v}\big]_T \Big] < \infty. 
	\]
We conclude that \(\mathsf{M}^{g, v}\) 
	is a (square integrable) \(Q\)-\(\mathbf{O}\)-martingale. This proves (ii). 
	
	{\em \(\textup{(ii)} \Rightarrow \textup{(i)}\):} 
	Take \(h \in C^2_b (\bR; \bR)\) and let \(g^n \in C^2_c (\bR; \bR)\) be such that \(h = g\) on \([-2n, 2 n]\). With
	\[
	T_n := \inf \{t \in [0, T] \colon | {_\V}\langle X_t, v \rangle_{\V^*} | \geq n\},
	\]
	the hypothesis of (ii) yields that the process \(\mathsf{M}^{h, v}_{\cdot \wedge T_n} = \mathsf{M}^{g^n, v}_{\cdot \wedge T_n}\) is a \(Q\)-\(\mathbf{O}\)-martingale. Consequently, because \(T_n \nearrow \infty\) as \(n \to \infty\), the process \(\mathsf{M}^{h, v}\) is a local \(Q\)-\(\mathbf{O}\)-martingale.
	Now, we deduce from Lemma~\ref{appendix: JS} in Appendix~\ref{sec: appendix} that, for every \(v \in \V^*\), the process 
	\[
	{_\V}\langle X, v \rangle_{\V^*} - {_\V} \langle x, v \rangle_{\V^*} - \int_0^\cdot \blv \Big \langle \int b (f, s, \oX_s, Q^X_s) \, \v (s, M, df), v \Big\rangle_{\V^*} \, ds 
	\]
	is a continuous local \(Q\)-\(\mathbf{O}\)-martingale with quadratic variation process 
	\[
	\int_0^\cdot \| \us^* (\v (s, M), s, \oX_s, Q^X_s) v \|^2_\U \, ds.
	\]
	The growth condition \eqref{eq: cond growth diffusion} and \(Q \in \NN\) imply that \(Q\)-a.s. 
	\[
	\int_0^T \| \us (\v(s, M), s, \oX_s, Q^X_s) \|^2_{L_2 (\U; \H)} \, ds < \infty.
	\]
	As the linear space \(\V^*\) is dense in \(\H^*\), Lemma~\ref{appendix: O Rep} in Appendix~\ref{sec: appendix} yields that, possibly on a standard extension of \((\Theta, \mathcal{O}, \mathbf{O}, Q)\), there exists a cylindrical Brownian motion \(W\) over \(\U\) such that, for all \(v \in \V^*\),
	\begin{align*}
		{_\V}\langle X, v \rangle_{\V^*} - {_\V} \langle x, v \rangle_{\V^*} - \int_0^\cdot  \blv \Big \langle  \int b (f&, s, \oX_s, Q^X_s) \, \v (s, M, df), v \Big\rangle_{\V^*} \, ds 
		\\&= \Big \langle \int_0^\cdot \us (\v (s, M), s, \oX_s, Q^X_s) \, d W_s, v \Big \rangle_\H.
	\end{align*}
	This proves that part (ii) from Definition~\ref{def: C MK} holds. Consequently, \(Q \in \cC^0 (x)\).
	
	{\em \(\textup{(iii)} \Rightarrow \textup{(ii)}\):} This implication follows readily by a density argument.
\end{proof}

A similar result can also be proved for the set \(\cC^n (x)\). 
	\begin{lemma} \label{lem: mg chara N}
	Suppose that Condition \ref{cond: main1} \textup{(iii)} holds. Let \(n \in \mathbb{N}, x \in \H\) and \(Q\in\fP(\Theta^n)\). The following are equivalent:
	\begin{enumerate}
		\item[\textup{(i)}]
		\(Q \in \cC^n (x)\).
		\item[\textup{(ii)}] The following hold:
			\begin{enumerate}
		\item[\textup{(a)}]
		\(Q (X^k_0 = x, k = 1, \dots, n) = 1\);
		\item[\textup{(b)}] \(Q \circ \Z_n^{-1} \in \J (x)\); 
		\item[\textup{(c)}]
		for all \(v^1, \dots, v^n \in \V^*\) and \(g \in C^2_b(\bR; \bR)\), the process 
		\begin{align*}
			g \Big( \sum_{k = 1}^n\, {_\V}\langle X^k, v^k\rangle_{\V^*}\Big) 
			- \sum_{k = 1}^n \int_0^\cdot \int  \mathcal{L}^k_{g, v^1, \dots, v^n} (f, s, \oX_s, \x_n (X_s)) \, M^k (ds, df)
		\end{align*}
		is a (square integrable) \(Q\)-\(\mathbf{O}\)-martingale.
	\end{enumerate}
	\end{enumerate}
\end{lemma}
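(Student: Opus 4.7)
The plan is to mirror the proof of Lemma~\ref{lem: mg chara} by packaging the $n$-particle system into the scalar semimartingale $S_t := \sum_{k=1}^n {_\V}\langle X^k_t, v^k\rangle_{\V^*}$ and applying the real-valued It\^o formula. This reduces most of the work to the one-dimensional setting, at the cost of having to extract the joint independence of the driving noises in the converse direction.

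For the implication (i) $\Rightarrow$ (ii), I would start from the standard extension provided by Definition~\ref{def: C^n}, on which independent cylindrical Brownian motions $W^1,\dots,W^n$ drive the individual semimartingales $\langle X^k, v^k\rangle_{\V^*}$. Properties (a) and (b) are built into the definition of $\cC^n(x)$. For (c), independence of the $W^k$ forces the off-diagonal covariations $d[\langle X^i, v^i\rangle_{\V^*}, \langle X^j, v^j\rangle_{\V^*}]$ to vanish for $i\neq j$, so the It\^o correction for $g(S_t)$ contains only diagonal $g''$-terms. Combining these with \eqref{eq: os} yields exactly the compensator $\sum_k \int_0^\cdot \int \mathcal{L}^k_{g,v^1,\dots,v^n}(f,s,\oX_s,\x_n(X_s))\,M^k(ds,df)$, and square-integrability of the residual martingale follows from \eqref{eq: cond growth diffusion} together with the per-particle moment bound encoded in $Q\circ\Z_n^{-1}\in\J(x)$.

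The converse (ii) $\Rightarrow$ (i) would proceed in three steps. First, for each fixed $k$, taking $v^j=0$ for $j\neq k$ reduces the hypothesis to the single-particle martingale problem of Lemma~\ref{lem: mg chara}~(ii) applied to $X^k$ with control $M^k$ and measure argument $\x_n(X_\cdot)$; the localization argument used there, combined with \cite[Theorem~II.2.42]{JS}, identifies the continuous local $Q$-$\mathbf{O}$-martingale part of $\langle X^k, v^k\rangle_{\V^*}$ and pins down its quadratic-variation density as $\|\us^*(\v(s,M^k),s,X^k_s,\x_n(X_s))v^k\|_\U^2$ for every $v^k \in \V^*$. Second, I would take nontrivial $v^i, v^j$ with all other coordinates zero, and match the $g''$-part of the compensator provided by (c)~--~which is purely diagonal~--~against the It\^o correction applied to the sum of the two individual semimartingales; the density of $\{g'' : g \in C^2_b\}$ in $C_b(\bR)$ then forces the cross quadratic covariation of the respective martingale parts to vanish. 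Third, with all off-diagonal covariations zero and the diagonal covariance densities governed by $\us$, \cite[Corollary~6]{OndRep} applied jointly (on a further standard extension if necessary) produces independent cylindrical Brownian motions $W^1,\dots,W^n$ realising the representation required by Definition~\ref{def: C^n}~(ii).

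The main obstacle will be Step~2: the hypothesis (c) is scalar-valued and couples all coordinates via $S_t$, so the joint independence of the $W^k$ must be extracted indirectly from the specific form of $\mathcal{L}^k$, in which $g''$ multiplies only the diagonal norms $\|\sigma^* v^i\|_\U^2$ and never any cross term $\langle \sigma^* v^i, \sigma^* v^j\rangle_\U$. Varying $v^i, v^j$ simultaneously and carefully matching It\^o corrections against the given compensator is what converts this structural feature into orthogonality of the martingale parts, which then translates into independence of the driving noises via the representation theorem.
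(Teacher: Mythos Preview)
Your approach is essentially what the paper intends—its own proof consists of the single sentence ``follows similar to the proof of (i) $\Leftrightarrow$ (ii) from Lemma~\ref{lem: mg chara}''—and your outline correctly isolates the one extra ingredient in the multi-particle setting, namely extracting the vanishing of the cross covariations $[N^i,N^j]$ for $i\neq j$ from the purely diagonal structure of the compensator in~(c).

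One small correction in Step~2: the set $\{g'' : g \in C^2_b(\bR)\}$ is \emph{not} dense in $C_b(\bR)$; for instance it contains no nonzero constant, since $g'' \equiv c \neq 0$ forces $g$ to be quadratic and hence unbounded. So the density claim cannot do the work you assign it. The fix is the same localisation already used in Lemma~\ref{lem: mg chara}: stop $S$ at $T_m := \inf\{t : |S_t| \ge m\}$, take $g \in C^2_b$ with $g'' \equiv 1$ on $[-m,m]$, and read off $[N^i,N^j]_{\cdot \wedge T_m} = 0$ directly; then let $m \to \infty$. Equivalently, apply \cite[Theorem~II.2.42]{JS} to the scalar process $S = {_\V}\langle X^i,v^i\rangle_{\V^*} + {_\V}\langle X^j,v^j\rangle_{\V^*}$ to identify $[S]$ from~(c), and compare with $[N^i]+2[N^i,N^j]+[N^j]$ obtained from Step~1. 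With this adjustment the argument goes through.
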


\begin{proof} The proof is similar to those of (i) \(\Leftrightarrow\) (ii) from Lemma \ref{lem: mg chara} and we omit the details for brevity. \end{proof}

The following is a direct consequence of Lemma \ref{lem: mg chara N}.

\begin{corollary} \label{coro: conv}
	Suppose that Condition \ref{cond: main1} \textup{(iii)} holds. For every \(x \in \H\) and \(n \in \mathbb{N}\), the set \(\cC^n (x)\) is convex. 
\end{corollary}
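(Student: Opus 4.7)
The plan is to invoke the martingale problem characterization established in Lemma~\ref{lem: mg chara N}, which replaces the pathwise stochastic-integral description of $\cC^n(x)$ from Definition~\ref{def: C^n} with the three conditions (a), (b), (c). The key observation is that each of these conditions is \emph{linear} in the underlying measure $Q \in \fP(\Theta^n)$, so the set they define is automatically convex. Concretely, given $Q_1, Q_2 \in \cC^n(x)$ and $\lambda \in [0,1]$, I would set $Q := \lambda Q_1 + (1 - \lambda) Q_2$ and verify (a), (b), (c) for $Q$.

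For (a), the initial condition, linearity of probabilities gives
\[
Q(X^k_0 = x, \; k = 1, \ldots, n) = \lambda \cdot 1 + (1 - \lambda) \cdot 1 = 1.
\]
For (b), the pushforward operation commutes with convex combinations, so $Q \circ \Z_n^{-1} = \lambda (Q_1 \circ \Z_n^{-1}) + (1 - \lambda)(Q_2 \circ \Z_n^{-1})$; since the defining integral bound of $\J(x)$ is itself linear in the argument measure, the bound by $\j(x)$ is inherited from $Q_1 \circ \Z_n^{-1}$ and $Q_2 \circ \Z_n^{-1}$. For (c), recall that a process $N$ is a $P$-$\mathbf{O}$-martingale if and only if $E^P[(N_t - N_s)\psi] = 0$ for every $s < t$ and every bounded $\mathcal{O}_s$-measurable $\psi$; this criterion is linear in $P$, so the martingale property carries over to $Q$. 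The square integrability assertion is analogous: the $Q$-expectation of the quadratic variation is the corresponding convex combination of the $Q_i$-expectations, each of which is finite.

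I do not expect any substantive obstacle. The only mildly delicate point is to confirm that the moment constraint (b) survives convex combinations, but this reduces to the linearity of pushforward and the linearity of the integral $\mu \mapsto E^\mu[\,\cdot\,]$ appearing in the definition of $\J(x)$. All in all, the argument is simply that Lemma~\ref{lem: mg chara N} cuts out $\cC^n(x)$ by linear constraints on $\fP(\Theta^n)$, and any such set is convex.
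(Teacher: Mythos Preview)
Your proposal is correct and matches the paper's own approach: the paper simply states that the corollary follows directly from Lemma~\ref{lem: mg chara N}, and your argument spells out precisely why---each of the conditions (a), (b), (c) in that lemma is a linear constraint on \(Q\), so the set they cut out is convex.
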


\subsection{Compactness properties}\label{sec: compactness}
In this section we investigate (relative) compactness of the sets \(\cR^n\) and \(\cR^0\). 

	\begin{lemma} \label{lem: Rk rel comp}
		Suppose that Condition \ref{cond: main1} \textup{(i)} and \textup{(iii)} hold.
		For every nonempty bounded set \(B \subset \H\), the set
		\[
		\cR (B) := \bigcup_{n \in \mathbb{N}} \bigcup_{x \in B} \cR^n (x)
		\]
		is relatively compact in \(\fP^\al (\fP^\al (\Theta))\). 
	\end{lemma}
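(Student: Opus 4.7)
The plan is to reduce the question to relative compactness in $\fP^\al (\Theta)$ of an auxiliary family of intensity measures, and then verify the reduced statement via moment bounds encoded in the definition of $\cC^n (x)$. For any $P \in \fP (\fP (\Theta))$, denote by $\overline{P} := \int \mu\, P(d\mu) \in \fP (\Theta)$ the intensity. A standard argument (of the type developed in \cite{LakSPA15}) shows that a family of laws on $\fP (\Theta)$ is tight as soon as the corresponding family of intensities is tight. The identity $\int \|\mu\|^\al_{\al, \fP (\Theta)} P (d \mu) = \int \d (\theta, \theta_0)^\al\, \overline{P}(d\theta)$ combined with the standard characterization of Wasserstein relative compactness (cf.\ \cite[Theorem~6.9]{villani09}) then upgrades this to the $\fP^\al$-topology: tightness of $\{P_n\}$ plus a uniform $(\al + \epsilon)$-th moment of the intensities $\overline{P}_n$ suffices for relative compactness of $\{P_n\}$ in $\fP^\al (\fP^\al (\Theta))$, since $\w_\al (\mu, \delta_{\theta_0})^{\al + \epsilon} \leq \int \d (\theta, \theta_0)^{\al + \epsilon} \mu (d\theta)$. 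Thus it suffices to prove relative compactness in $\fP^\al (\Theta)$ of $\{\overline{Q} : Q \in \cR (B)\}$.

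The moment input is immediate from the definition of $\cR^n (x)$: since $Q \in \J (x)$, the intensity $\overline{Q}$ satisfies
\[
E^{\overline{Q}} \Big[ \sup_{s \in [0, T]} \|X_s\|^{2 \eta}_\H + \int_0^T \big( \N (X_s) + \N_{\beta/2 + 1} (X_s) \big) ds \Big] \leq \j(x) \leq \sup_{y \in B} \j(y) < \infty,
\]
the right-hand bound using boundedness of $B$ and the fact that $\j$ is bounded on bounded subsets of $\H$. To establish tightness of $\{\overline{Q}\}$ in $\fP(\Theta) = \fP(\Omega \times \m)$, note that the $\m$-marginals are automatically tight since $\m$ is compact metrizable. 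For the $\Omega$-marginal, I construct compact subsets of $\Omega$ of the form
\[
K_{A, B, m} := \Big\{ \omega \in \Omega : \sup_{s \in [0, T]} \|\omega (s)\|_\H \leq A,\ \int_0^T \N (\omega (s))\, ds \leq B,\ w_\V (\omega, \delta) \leq m(\delta)\ \forall\, \delta > 0 \Big\},
\]
with $w_\V$ the modulus of continuity in $\V$ and $m(\delta) \searrow 0$. The $L^\alpha ([0,T]; \Y)$-component of the metric $\d$ is controlled via the $\int_0^T \N$-bound using $\|y\|^\alpha_\Y \leq C \N (y)$, which follows from the homogeneity $\N (cy) \leq c^\alpha \N(y)$ and the relative compactness (hence $\Y$-boundedness) of $\{\N \leq 1\}$. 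Arzel\`a--Ascoli combined with the first and third bounds yields relative compactness in $C([0,T]; \V)$. Markov's inequality and the moment bounds above enforce the first two constraints with probability close to one; the modulus-of-continuity estimate in $\V$ is derived from the martingale-problem characterization in Lemma~\ref{lem: mg chara N} via the growth bounds \eqref{eq: cond growth drift}--\eqref{eq: cond growth diffusion}, H\"older's inequality (using $\gamma > 1$) for the drift term, and the Burkholder--Davis--Gundy inequality for the stochastic integral.

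For the upgrade from tightness to $\fP^\al (\Theta)$-relative compactness via the $(\al+\epsilon)$-moment bound, the $\sup_s \|\cdot\|_\V^{\al + \epsilon}$-contribution is controlled by the $2\eta$-moment (since $2 \eta > \al$ by \eqref{eq: cond constants}); the $L^\alpha([0, T]; \Y)^{1/\alpha}$-contribution is controlled via the inequality $\alpha > \al$ from \eqref{eq: cond constants}: choosing $\epsilon > 0$ with $(\al + \epsilon)/\alpha \leq 1$ and applying Jensen's inequality to the concave map $t \mapsto t^{(\al + \epsilon)/\alpha}$ gives $E^{\overline{Q}} [( \int_0^T \|X_s\|^\alpha_\Y ds)^{(\al + \epsilon)/\alpha}] \leq C$ uniformly over $\cR (B)$. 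The principal obstacle will be the modulus-of-continuity estimate in $\V$: the relaxed-control/mean-field structure forces us to carefully disentangle the dependence on $\v$ and on the empirical distribution $\x_n (X)$, and the $\|\mu\|^\beta_{\beta, \H}$-term appearing in \eqref{eq: cond growth drift} must be absorbed using the a priori bounds on $\overline{Q}$, where the relation $2\eta \geq \beta$ from \eqref{eq: cond constants} is essential.
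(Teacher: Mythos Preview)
Your proposal is correct and follows essentially the same route as the paper: reduce to tightness of the intensity measures (the paper's map $I$), control the $\sup_s\|X_s\|_\H^{2\eta}$ and $\int_0^T \N(X_s)\,ds$ via the $\J(x)$-bound, derive a $\V$-increment estimate from the growth conditions \eqref{eq: cond growth drift}--\eqref{eq: cond growth diffusion} using H\"older on the drift and BDG on the stochastic integral, and upgrade to the Wasserstein topology via a higher ($\alpha>\al$) moment. The only notable technical difference is that the paper works with the SDE representation from Definition~\ref{def: C^n} directly and proves a uniform H\"older bound $E[\sup_{s\neq t}\|X_s-X_t\|_\V/|t-s|^l]<\infty$ (invoking the Besov--H\"older embedding for the stochastic term), then cites \cite[Lemma~4.3]{GRZ} for the tightness criterion on $\Omega$, whereas you phrase the same estimate as a modulus-of-continuity bound and appeal to the martingale-problem form; these are interchangeable here.
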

\begin{proof}
	{\em Step 1: Relative compactness in \(\fP(\fP(\Theta))\).} We define the map \(I \colon \fP(\fP(\Theta)) \to \fP(\Omega)\) by 
	\begin{align} \label{eq: I}
	I (Q) (G) := \int \mu (G \times \m) \, Q(d\mu), \quad G \in \mathcal{B}(\Omega).
	\end{align}
	In the following we prove relative compactness of the family \[\mathcal{I} := \{I (Q) \colon Q \in \cR(B)\} \subset \fP (\Omega).\]
	By virtue of Lemma~\ref{appendix: tight GRZ} in Appendix~\ref{sec: appendix}, Condition~\ref{cond: main1}~(i) and the definition of \(\J (x)\), it suffices to prove that there exists a constant \(\ell > 0\) such that 
	\begin{align} \label{eq: to show for tightness}
	\sup_{Q \in \mathcal{R} (B)} E^{I (Q)} \Big[ \sup_{s \not = t} \frac{\|X_s - X_t\|_\V}{|s - t|^{\ell}} \Big] < \infty.
	\end{align}
	Take \(Q = P \circ \Z^{-1}_n \in \cR^n (x)\). Then, 
	\[
	E^{I (Q)} \Big[ \sup_{s \not = t} \frac{\|X_s - X_t\|_\V}{|s- t|^{\ell}} \Big]  = \frac{1}{n} \sum_{k = 1}^n E^P \Big[ \sup_{s \not = t} \frac{\|X^k_s - X^k_t\|_\V}{|s - t|^{\ell}} \Big].
	\]
	By definition of \(\cC^n (x)\), have \(P\)-a.s., for all \(v \in \V^*\), 
\begin{align*}
		{_\V}\langle X^k_t, v \rangle_{\V^*} =  {_\V}\langle x, v \rangle_{\V^*} &+ \int_0^t \blv \Big\langle \int b (f, s, \oX^k_s, \x_n (X_s)) \v (s, M^k, df), v \Big \rangle_{\V^*} \, ds
		\\&+ \Big \langle \int_0^t \us (\v (s, M^k), s, \oX^k_s, \x_n (X_s))\, d W^k_s, v \Big\rangle_\H, 
\end{align*}
	for independent cylindrical Brownian motions \(W^1, \dots, W^n\).
	Thus, \(P\)-a.s., for \(s < t\),
	\begin{align*}
	\|X_t^k- X^k_s\|_\V \leq \int_s^t \int \| b &(f, r, \oX^k_r, \x_n (X_r)) \|_\V \, \v (r, M^k, df) \, dr 
	\\&+ \Big\| \int_s^t \us (\v (r, M^k), r, \oX^k_r, \x_n (X_r)) \, d W^k_r \Big\|_\H.
	\end{align*}
		Take \(\lambda^* \geq \lambda\) such that \(\lambda^* T \geq 1\). 
		Using H\"older's inequality for the first inequality, Jensen's inequality for the second, \eqref{eq: cond growth drift} for the third and finally that \(z^{1/\gamma} \leq z\) when \(z \geq 1\) for the fourth (the assumption \(\lambda^* T \geq 1\) implies that the value of the integral is larger or equal than one), we obtain that 
	\begin{align*}
		 \int_s^t \int \| b (f,& r, \oX^k_r, \x_n (X_r)) \|_{\V} \, \v (r, M^k, df) \, dr 
		 	\\&\leq |s - t|^{1 - 1 / \gamma} \, \Big( \int_s^t \Big( \int \| b (f, r, \oX^k_r, \x_n (X_r))\|_{\V} \, \v (r, M^k, df) \Big)^\gamma \, dr \Big)^{1/\gamma}
		 		 	\\&\leq |s - t|^{1 - 1 / \gamma} \, \Big( \int_s^t \int \| b (f, r, \oX^k_r, \x_n (X_r))\|_{\V}^\gamma \, \v (r, M^k, df) \, dr \Big)^{1/\gamma}
	 \\&\leq |s - t|^{1 - 1 /\gamma} \, \Big( \int_0^T \lambda^*  \Big( (1 + \N (X^k_r)) ( 1 + \|X^k_r\|^\beta_\H ) + \frac{1}{n} \sum_{i = 1}^n \|X^i_r\|^\beta_\H \Big) \, dr \Big)^{1/\gamma}
		 \\&\leq |s - t|^{1 - 1 /\gamma}  \int_0^T \lambda^* \Big( (1 + \N (X^k_r)) ( 1 + \|X^k_r\|^\beta_\H ) + \frac{1}{n} \sum_{i = 1}^n \|X^i_r\|^\beta_\H \Big) \, dr.
	\end{align*}
Thus, recalling \(\beta \leq 2 \eta\) and \(\N_{\beta /2 + 1} (X_r^k) = \|X_r^k\|^\beta_\H\, \N (X_r^k)\), we deduce from \(P \circ \Z_n^{-1} \in \J (x)\) that 
\begin{align*}
	\frac{1}{n} \sum_{k = 1}^n E^P & \Big[ \sup_{s \not = t} \int_s^t \int \|b (f, X^k_r, \x_n (X_r))\|_\V \, \v (r, M^k, df) \, dr \Big \slash |s - t|^{1 - 1 /\gamma} \Big] \leq C, 
\end{align*}
where the constant only depends on the boundedness of \(B\) and the function \(\j\) that appears in the definition of \(\J (x)\). 
By Burkholder's inequality and \eqref{eq: cond growth diffusion}, we obtain that 
\begin{align*}
	E^P \Big[ \Big\| \int_s^t \us (\v (r&, M^k), r, \oX^k_r, \x_n (X_r)) \, d W^k_r \Big\|^{2 \eta}_\H \, \Big] 
	\\&\leq CE^P \Big[ \Big( \int_s^t \| \us (\v (r, M^k), r, \oX^k_r, \x_n (X_r)) \|^2_{L_2 (\U; \H)} \, dr \Big)^{\eta} \, \Big] 
	\\&\leq CE^P \Big[ \Big( \int_s^t \lambda\, \Big( 1 + \|X^k_r\|^2_\H + \frac{1}{n} \sum_{i = 1}^n \|X^i_r\|^2_\H \Big) \, dr \Big)^{\eta}\, \Big]
	\\&\leq  C|s - t|^\eta \, 3^\eta \lambda^\eta \, \Big( 1 + E^P \Big[ \sup_{r \in [0, T]} \|X^k_r\|^{2\eta}_\H + \frac{1}{n} \sum_{i = 1}^n \sup_{r \in [0, T]} \|X^i_r\|^{2 \eta}_\H \Big]\Big).
\end{align*}
Consequently, using that \(P \circ \Z_n^{-1} \in \J(x)\), there exists a constant \(C > 0\), with the same dependencies as above, such that 
\begin{align*}
	\frac{1}{n} \sum_{k = 1}^n E^P \Big[ \Big\| \int_s^t \us (\v (r, M^k), r, \oX^k_r, \x_n (X_r)) \, d W^k_r \Big\|^{2 \eta}_\H\, \Big] &\leq C |s - t|^{\eta}. 
\end{align*}
Using Jensen's inequality and the Besov--H{\"o}lder embedding (see Lemma~\ref{appendix: besov} in Appendix~\ref{sec: appendix}; apply it with \(q = 2 \eta\) and \(\alpha = (\eta - 1)/2 \eta\), where \(\alpha > 1/q\) is equivalent to \(\eta > 2\)) yields that 
\begin{align*}
	\frac{1}{n} \sum_{k = 1}^n & E^P \Big[  \sup_{s \not = t} \Big\| \int_s^t \us (\v (r, M^k), r, \oX^k_r, \x_n (X_r)) \, d W^k_r \Big\|_\H \Big \slash |s - t|^{(\eta - 2) / 2 \eta} \Big] 
	\\&\leq \Big( \frac{1}{n} \sum_{k = 1}^n \, E^P \Big[  \sup_{s \not = t} \Big\| \int_s^t \us (\v (r, M^k), r, \oX^k_r, \x_n (X_r)) \, d W^k_r \Big\|_\H^{2 \eta} \Big \slash |s - t|^{\eta - 2} \Big] \Big)^{1 / 2 \eta}
	\\&\leq \Big( \frac{1}{n} \sum_{k = 1}^n E^P \Big[ \int_0^T \int_0^T \Big\| \int_s^t \us (\v (r, M^k), r, \oX^k_r, \x_n (X_r)) d W^k_r \Big\|^{2\eta}_\H \Big \slash |s - t|^{\eta} \, dt \, ds \Big] \Big)^{1/ 2 \eta}
	\\&\leq C T^{1 / \eta}. \phantom {\int_0^T}
\end{align*}
We conclude that \eqref{eq: to show for tightness} holds with \(\ell = (1 - 1 / \gamma) \wedge (\eta - 2)/ 2\eta\). 

\smallskip
\noindent
{\em Step 2: Relative compactness in \(\fP^\al (\fP^\al (\Theta))\).} 
The following is implied by \cite[Lemma~C.1]{GRZ}: there exists a constant \(C > 0\) such that 
	\begin{align} \label{eq: C1}
	\|x\|_\Y^\alpha \leq C\, ( \N (x) + \|x\|^\alpha_\V ), \quad \forall \, x \in \Y  \text{ with } \N(x) < \infty. 
	\end{align} 
Hence, using the definition of \(\J\), we obtain that
\begin{align*}
\sup_{Q \in \cR (B)} E^{I (Q)} \big[ \d (X, 0)^\alpha \big] &\leq C \, \sup_{Q \in \cR (B)} E^{I (Q)} \Big[ \sup_{s \in [0, T]} \| X_s\|^\alpha_\V + \int_0^T \| X_s \|^\alpha_\Y \, ds \Big] 
\\&\leq C \, \sup_{Q \in \cR (B)} E^{I (Q)} \Big[ \sup_{s \in [0, T]} \| X_s\|^\alpha_\V + \int_0^T \big[ \N (X_s) + \|X_s\|^\alpha_\V \big] \, ds \Big] 
\\&\leq C \, \sup_{Q \in \cR (B)} E^{I (Q)} \Big[ \sup_{s \in [0, T]} \| X_s\|^\alpha_\V + \int_0^T \N (X_s) \, ds \Big] 
< \infty. 
\end{align*}
	By Lemma~\ref{appendix: tight wasserstein} in Appendix~\ref{sec: appendix}, the compactness of \(\m\) and Step~1, we conclude that \(\cR (B)\) is relatively compact in \(\fP^\al (\fP^\al (\Theta))\). The proof is complete.
\end{proof}

\begin{lemma} \label{lem: M closed}
	Let \((x^n)_{n = 0}^\infty \subset \H\) be such that \(x^n \to x^0\) in \(\|\,\cdot\,\|_\H\). Furthermore, let \((Q^n)_{n = 0}^\infty \subset \fP (\fP (\Theta))\) be such that \(Q^n \in \J (x^n)\) for all \(n \in \mathbb{N}\) and \(Q^n \to Q^0\) in \(\fP (\fP (\Theta))\). Then, \(Q^0 \in \J (x^0)\). 
\end{lemma}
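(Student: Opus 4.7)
The plan is to recast $\J(x)$ as a sublevel set of a single functional on $\fP(\fP(\Theta))$ and then exploit lower semicontinuity. Introduce
\[
\Psi(\omega, m) := \sup_{s \in [0,T]} \|\omega(s)\|^{2\eta}_\H + \int_0^T \big[\N(\omega(s)) + \N_{\beta/2+1}(\omega(s))\big] ds,
\]
so that $Q \in \J(x)$ iff $F(Q) := \int E^\mu[\Psi]\, Q(d\mu) \leq \j(x)$. I will show that $F\colon \fP(\fP(\Theta)) \to [0,\infty]$ is lower semicontinuous. Since $\j$ is continuous on $\H$, we have $\j(x^n) \to \j(x^0)$, and then
\[
F(Q^0) \leq \liminf_n F(Q^n) \leq \lim_n \j(x^n) = \j(x^0),
\]
which is exactly $Q^0 \in \J(x^0)$.

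The first step is to verify that $\Psi \colon \Theta \to [0,\infty]$ is lower semicontinuous. The paper's extension convention makes $\|\cdot\|_\H$ an LSC map $\V \to [0,\infty]$, and composition with $r \mapsto r^{2\eta}$ preserves LSC. Since convergence in the metric $\d$ entails uniform convergence in $\V$, the standard ``supremum of LSC'' argument produces LSC of $\omega \mapsto \sup_s \|\omega(s)\|_\H^{2\eta}$ on $\Omega$. For $\N$, its LSC on $\Y$, together with the scaling $\N(cy) \leq c^\alpha \N(y)$ and relative compactness of $\{\N \leq 1\}$ in $\Y$, implies (via rescaling) that every sublevel set of $\N$ is relatively compact in $\Y$; from this, any $\V$-limit of a sequence bounded in $\N$ must lie in $\Y$ (by uniqueness of limits in the Gelfand triple), so the extension of $\N$ to $\V$ by $+\infty$ is LSC. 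The product $\N_{\beta/2+1}(y) = \|y\|_\H^\beta \N(y)$ is then a product of two nonnegative LSC functions on $\V$, hence LSC. Fatou's lemma applied along pointwise $\V$-convergence then yields LSC of $\omega \mapsto \int_0^T \N(\omega(s)) ds$ and $\omega \mapsto \int_0^T \N_{\beta/2+1}(\omega(s)) ds$ on $\Omega$. Summing up, $\Psi$ is LSC on $\Omega$ and, being independent of $m$, LSC on $\Theta = \Omega \times \m$ in the product topology.

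The second step is routine: because $\Psi \geq 0$ is LSC, the Portmanteau theorem (in its $[0,\infty]$-valued LSC form, obtained by monotone approximation $\Psi \wedge N$) gives that $\mu \mapsto E^\mu[\Psi]$ is LSC on $\fP(\Theta)$. A second application of the same result, now to the nonnegative LSC integrand $\mu \mapsto E^\mu[\Psi]$, yields LSC of $F$ on $\fP(\fP(\Theta))$. Combined with $F(Q^n) \leq \j(x^n) \to \j(x^0)$, this finishes the proof.

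The hard part will be the LSC of the extended functionals $\N$ and $\N_{\beta/2+1}$ on $\V$; once the coercivity/compactness of $\N$'s sublevel sets is exploited to pin down the limit inside $\Y$, the remainder of the argument is a clean double application of Portmanteau together with the continuity of $\j$. No moment estimate, uniform integrability, or separability argument is needed, because we work entirely with nonnegative $[0,\infty]$-valued LSC integrands.
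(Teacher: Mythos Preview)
Your proposal is correct and follows essentially the same approach as the paper: show that the integrand is lower semicontinuous on \(\Theta\), lift this through two layers of integration via Fatou/Portmanteau, and conclude from the continuity of \(\j\). Your treatment is in fact more explicit than the paper's about why \(\N\) extended by \(+\infty\) to \(\V\) is lower semicontinuous (via the relative compactness of its sublevel sets in \(\Y\)), a detail the paper leaves implicit when invoking Fatou for the integral term.
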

\begin{proof} 
 First, we claim that \((t, \omega) \mapsto \omega (t)\) is continuous from \([0, T] \times \Omega\) into \(\V\). To see this, take a sequence \((t^n, \omega^n)_{n = 0}^\infty \subset [0, T] \times \Omega\) such that \((t^n, \omega^n) \to (t^0, \omega^0)\). By our choice for the topology of~\(\Omega\), \(\{\omega^n \colon n \in \mathbb{N}\}\) is a relatively compact subset of \(C ([0, T]; \V)\), endowed with the uniform topology. Therefore, we deduce from the Arzel\`a--Ascoli theorem (see Lemma~\ref{appendix: arzela} in Appendix~\ref{sec: appendix}) that 
		\[
		\lim_{h \to 0} \sup_{n \in \mathbb{N}} \sup \Big\{ \| \omega^n (s) - \omega^n (t) \|_\V \colon s, t \in [0, T], \, |s - t| \leq h \Big\} = 0.
		\]
	Consequently, as \(|t^n - t^0| \to 0\), we obtain that 
	\[
	\| \omega^n (t^n) - \omega^0 (t^0) \|_\V \leq \| \omega^n (t^n) - \omega^n (t^0)\|_\V + \| \omega^n (t^0) - \omega^0 (t^0) \|_\V \to 0
	\]
	as \(n \to \infty\). This proves the continuity of \((t, \omega) \mapsto \omega (t)\).
As a consequence, \((t, \omega) \mapsto \|\omega (t)\|_\H\) is lower semicontinuous from \([0, T] \times \Omega\) into \([0, \infty]\) (as \(x \mapsto \|x\|_\H\) is lower semicontinuous from \(\V\) into \([0, \infty]\); recall our convention that \(\|x\|_\H = \infty\) for \(x \in \V\setminus \H\)). By Berge's maximum theorem (see Lemma~\ref{appendix: berge} in Appendix~\ref{sec: appendix}), the same is true for the map \((t, \omega) \mapsto \sup_{s \in [0, t]} \| \omega(s)\|_\H\).
By Lemma~\ref{appendix: lower semi} in Appendix~\ref{sec: appendix}, 
\[
\omega \mapsto \int_0^T \big[ \N (\omega (s)) + \N_{\beta /2 + 1} (\omega (s)) \big] \, ds 
\]
is lower semicontinuous from \(\Omega\) into \([0, \infty]\). 
Summing up, as the class of \([0, \infty]\)-valued lower semicontinuous functions is stable under summation, the map 
	\[
	\omega \mapsto \sup_{s \in [0, t]} \| \omega (s)\|_\H + \int_0^T \big[ \N (\omega(s)) + \N_{\beta / 2 + 1} (\omega (s)) \big] \, ds 
	\] 
	is lower semicontinuous from \(\Omega\) into \([0, \infty]\). By Lemma~\ref{appendix: lower semi} in Appendix~\ref{sec: appendix}, we conclude that
	\[
	\mu \mapsto E^\mu \Big[ \sup_{s \in [0, T]} \|X_s\|^{2\eta}_\H + \int_0^T \big[ \N (X_s) + \N_{\beta /2 + 1} (X_s) \big] \, ds \Big]
	\]
	is lower semicontinuous from \(\mathcal{P} (\Theta)\) into \([0, \infty]\).

Using once again Lemma~\ref{appendix: lower semi} in Appendix~\ref{sec: appendix}, the assumption that \(Q^n \in \J (x^n)\), and the continuity of \(x \mapsto \j (x)\), it follows that
\begin{align*}
	\int E^{\mu} \Big[ \sup_{s \in [0, T]} & \|X_s\|^{2\eta}_\H + \int_0^T \big[ \N (X_s) + \N_{\beta /2 + 1} (X_s) \big] ds \Big] \, Q^0 (d \mu)
	\\&\leq \liminf_{n \to \infty} 	\int E^{\mu} \Big[ \sup_{s \in [0, T]} \|X_s\|^{2 \eta}_\H + \int_0^T \big [ \N (X_s) + \N_{\beta /2 + 1} (X_s) \big] ds \Big] \, Q^n (d \mu)
	\\&\leq \liminf_{n \to \infty} \j (x^{n})	
	= \j ( x^0).  \phantom{\int}
\end{align*}
We conclude that \(Q^0 \in \J (x^0)\). 
\end{proof}

\begin{lemma} \label{lem: C comp}
	Assume that Condition~\ref{cond: main1} holds.
	For every compact set \(\mathscr{K} \subset \H\),
	the set
 	\begin{align*}
 		\cR^0 (\mathscr{K}) := \bigcup_{x \in \mathscr{K}} \cR^0 (x)
 	\end{align*}
 is compact in \(\fP^\al (\fP^\al (\Theta))\). 
\end{lemma}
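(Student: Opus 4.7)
The plan is to treat relative compactness and closedness separately, mirroring the pattern of Lemma~\ref{lem: Rk rel comp} for the first part and exploiting the martingale characterization of Lemma~\ref{lem: mg chara} for the second.

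For relative compactness, any $Q \in \cR^0(x)$ satisfies $Q(\cC^0(x)) = 1$, so $Q$-a.e.\ $\mu$ has coordinate process $X$ solving the McKean--Vlasov SPDE. The single-particle analogue of Step~1 in the proof of Lemma~\ref{lem: Rk rel comp}---Hölder on the drift integral via \eqref{eq: cond growth drift}, Burkholder on the stochastic integral via \eqref{eq: cond growth diffusion}, and the Besov--Hölder embedding---yields a uniform Hölder modulus estimate for the mean measure $I(Q)$ (see \eqref{eq: I}) whose constant depends only on $\j(x)$. Since $\j$ is continuous, it is bounded on the compact $\mathscr{K}$, so the family $\{I(Q) : Q \in \cR^0(\mathscr{K})\}$ is tight in $\fP(\Omega)$ with uniformly bounded $\alpha$-moments. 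Then \cite[Corollary~B.2]{LakSPA15} together with the compactness of $\m$, exactly as in Step~2 of Lemma~\ref{lem: Rk rel comp}, lifts this to relative compactness of $\cR^0(\mathscr{K})$ in $\fP^\al(\fP^\al(\Theta))$.

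For closedness, fix a sequence $Q^n \in \cR^0(x^n)$ with $x^n \in \mathscr{K}$ and $Q^n \to Q^0$ in $\fP^\al(\fP^\al(\Theta))$; after extracting a subsequence, $x^n \to x^0 \in \mathscr{K}$. Lemma~\ref{lem: M closed} already delivers $Q^0 \in \J(x^0)$, so the task reduces to $Q^0(\cC^0(x^0)) = 1$. My strategy is to establish closedness of the truncated graph
\[
\mathcal{A}_K := \Big\{(\mu, x) \in \fP^\al(\Theta) \times \H : \mu \in \cC^0(x),\ E^\mu\Big[\sup_{s \in [0, T]} \|X_s\|^{2\eta}_\H + \int_0^T \big[\N(X_s) + \N_{\beta/2+1}(X_s)\big]ds\Big] \le K\Big\}
\]
for each $K > 0$. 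Since $Q^n \otimes \delta_{x^n} \to Q^0 \otimes \delta_{x^0}$ in $\fP(\fP^\al(\Theta) \times \H)$ and Chebyshev's inequality applied to the $\J$-bound yields $(Q^n \otimes \delta_{x^n})(\mathcal{A}_K^c) \le \sup_m \j(x^m)/K$ uniformly in $n$, the portmanteau theorem for closed sets will give
\[
Q^0(\cC^0(x^0)) \ge (Q^0 \otimes \delta_{x^0})(\mathcal{A}_K) \ge 1 - C/K
\]
for every $K > 0$, hence $Q^0(\cC^0(x^0)) = 1$ upon letting $K \to \infty$. The graph $\mathcal{A}_K$ is cut out by the initial constraint $\mu(X_0 = x) = 1$ (preserved by continuity of $(\mu, x) \mapsto E^\mu[f(X_0)] - f(x)$), the integrability cap (closed by lower semicontinuity as in the proof of Lemma~\ref{lem: M closed}), and the countable family of martingale identities $E^\mu[(\mathsf{M}^{g, y}_t - \mathsf{M}^{g, y}_s)\psi] = 0$ from Lemma~\ref{lem: mg chara}(iii).

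The principal obstacle will be verifying that the martingale identities are preserved in the limit. The integrand $\mathcal{L}_{g, y}(f, r, X_r, \mu^X_r)$ depends doubly on $\mu$: directly through the argument $\mu^X_r$ and implicitly through the outer expectation $E^\mu$. The continuity of $\mu \mapsto \mu \circ X_r^{-1}$ from $\fP^\al(\Theta)$ into $\fP^{2\eta}_\al(\H)$ follows because $X_r \colon \Theta \to \V$ is continuous and the uniform $2\eta$-th $\H$-moment is controlled by the cap $K$; combined with Condition~\ref{cond: main1}(ii), this gives pointwise continuity of the integrand in $\mu$. The delicate step is then the interchange of this limit with the time integration against $M$ and the outer $\mu^n$-integration, which is justified by the uniform local boundedness clause of Condition~\ref{cond: main1}(ii), the compactness of $F$, and the uniform integrability afforded by the cap. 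The countability of the determining class $\B, \tC, \tT_s$ in Lemma~\ref{lem: mg chara}(iii) ensures $\mathcal{A}_K$ is a countable intersection of closed sets, hence closed.
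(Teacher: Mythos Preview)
Your relative compactness argument is the paper's Step~1 verbatim. For closedness, the paper proceeds differently at the organizational level: instead of introducing the truncated graph \(\mathcal{A}_K\) and invoking portmanteau on \(Q^n \otimes \delta_{x^n}\), it works directly with \(Q^n \to Q^0\) and shows \(E^{Q^0}[|\oZ|] = 0\) for each martingale test functional \(\oZ\). The paper's truncation is on the integrand \(\mathcal{L}_{g,y}\) itself, defining \(\oM^k_r\) with \((-k) \vee \mathcal{L}_{g,y} \wedge k\) inside the time integral, proving a continuity lemma for \(\oM^k_r\) on \(\Theta \times \NN\), and then controlling the truncation error by the bound \(|\mathcal{L}_{g,y}|^\gamma \le C[(1+\N)(1+\|\cdot\|^\beta_\H) + \|\mu_s\|^\beta_{\beta,\H}]\) together with membership in \(\J\), giving \(|E^{Q}[|\oZ^k| - |\oZ|]| \le C/k^{\gamma-1}\) uniformly.

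Your graph-plus-portmanteau packaging is valid and ultimately rests on the same estimate: the cap \(K\) on \(\sup\|X\|^{2\eta}_\H + \int[\N+\N_{\beta/2+1}]\) forces a uniform \(L^\gamma\) bound on \(\mathcal{L}_{g,y}\) under \(\mu \otimes M\), which is what delivers the uniform integrability you invoke. One imprecision: the ``uniform local boundedness clause'' of Condition~\ref{cond: main1}(ii) concerns boundedness on compact subsets of \(\Y \times \fP^{2\eta}_\al(\H)\) and is relevant for pointwise continuity of the integrand, but it does not by itself yield uniform integrability since \(X_r\) does not live in a compact set; the latter comes solely from the cap via the growth bound \eqref{eq: cond growth drift}. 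With that correction your sketch goes through. The two routes buy the same thing; yours isolates the stability of \(\cC^0(x)\) as a closed-graph statement, while the paper's makes the quantitative truncation error explicit.
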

\begin{proof}
	
	{\em Step 1.} We first show that \(\cR^0 (\mathscr{K})\) is relatively compact in \(\fP^\al (\fP^\al (\Theta))\). Here, we argue along the lines of the proof for Lemma~\ref{lem: Rk rel comp}. Recall the definition of the map \(I\) from \eqref{eq: I}. First, we prove relative compactness of \(\mathcal{J} := \{ I (Q) \colon Q \in \cR^0(\mathscr{K})\}\) in \(\mathcal{P}(\Omega)\). Take \(Q \in \cR^0(x)\) with \(x \in \mathscr{K}\).
	By the definition of \(\cR^0(x)\), \(Q\)-a.a. \(\mu \in \fP (\Theta)\) are elements of \(\cC^0(x)\). 
	Now, we can argue as in the proof of Lemma~\ref{lem: Rk rel comp} to conclude that
	\begin{equation*} \begin{split}
		E^{I (Q)} \Big[ \sup_{s \not = t} & \frac{\|X_t - X_s\|_\V}{|t - s|^{(1 - 1/ \gamma) \wedge (\eta - 2) / 2\eta}} \Big] 
		\\&= \int E^{\mu} \Big[ \sup_{s \not = t} \frac{\|X_t - X_s\|_\V}{|t - s|^{(1 - 1/ \gamma) \wedge (\eta  - 2) / 2 \eta}} \Big]  \, Q (d \mu)
		\\&\leq C \Big( 1 + \int E^\mu \Big[ \sup_{s \in [0, T]} \|X_s\|^{2 \eta}_\H + \int_0^T \N (X_s) ds +  \int_0^T \N_{\beta/2 + 1}(X_s) ds \Big] \, Q (d \mu) \Big).
	\end{split}
\end{equation*}
By the definition of \(\J\), the properties of \(\j\) and the boundedness of \(\mathscr{K}\), there exists a constant independent of \(Q\) such that 
\[
E^{I (Q)} \Big[ \sup_{s \not = t} \frac{\|X_t - X_s\|_\V}{|t - s|^{(1 - 1/ \gamma) \wedge (\eta - 2) / 2 \eta}} \Big]  \leq C.
\] 
Using the same facts again, it follows from Condition~\ref{cond: main1}~(i) and Lemma~\ref{appendix: tight GRZ} that \(\mathcal{J}\) is relatively compact in~\(\fP (\Omega)\). 
Finally, relative compactness in \(\fP^\al (\fP^\al (\Theta))\) follows from Lemma~\ref{appendix: tight wasserstein} in Appendix~\ref{sec: appendix}, taking \eqref{eq: C1} and, once again, the definition of \(\J\) into consideration.
	
	\smallskip
	\noindent
	{\em Step 2.} In this step, we prove that \(\cR^0 (\mathscr{K})\) is closed, where we use the martingale problem characterization of \(\cC^0\) as given by Lemma~\ref{lem: mg chara}.
	Take a sequence \((Q^n)_{n = 1}^\infty\subset \cR^0(\mathscr{K})\) such that \(Q^n \to Q^0\) in \(\fP^\al (\fP^\al (\Theta))\). By definition of \(\cR^0 (\mathscr{K})\), there exists a sequence \((x^n)_{n = 1}^\infty \subset \mathscr{K}\) such that \(\mu \circ X^{-1}_0 = \delta_{x^n}\) for \(Q^n\)-a.a. \(\mu \in \fP(\Theta)\). By the compactness of \(\mathscr{K}\), possibly passing to a subsequence, we may assume that \(x^n \to x^0 \in \mathscr{K}\) in \(\|\cdot\|_\H\). 
	
	Take \(\varepsilon > 0\) and set \(G = G (\varepsilon) := \{ Q \in \W (\Theta) \colon \mathsf{p}_{\V} (Q \circ X^{-1}_0, \delta_{x^0}) \leq \varepsilon \}\), where \(\mathsf{p}_{\V}\) denotes some metric on \(\fP (\V)\) that induces the weak topology.
	As the map \(Q \mapsto \p_{\V} (Q \circ X^{-1}_0, \delta_{x_0})\) is continuous from \(\fP(\Theta)\) into \(\mathbb{R}_+\), the set \(G\) is closed in \(\fP(\Theta)\). Hence, by the Portmanteau theorem, 
	\begin{align*}
		Q^0 (G) &\geq \limsup_{n \to \infty} Q^n (G) 
		= \limsup_{n \to \infty} \1 \{ \mathsf{p}_{\V} (\delta_{x^n}, \delta_{x^0} ) \leq \varepsilon \} = 1.
	\end{align*}
	As \(\varepsilon > 0\) was arbitrary, it follows that 
	\begin{align*}
		Q^0 ( \{ \mu \in \W(\Theta) \colon \mu\circ X^{-1}_0 = \delta_{x^0} \} ) = 1, 
	\end{align*}
	that is almost all realizations of \(Q^0\) satisfy (a.i) from Lemma~\ref{lem: mg chara} with initial value \(x^0\).
	
	We deduce from Lemma~\ref{lem: M closed} that \(Q^0 \in \J (x^0)\). In particular, this implies that \(\Q^0 (\NN) = 1\), i.e., (a.ii) from Lemma~\ref{lem: mg chara} holds for \(Q^0\)-a.a. realizations.

	It remains to show that \(Q^0\)-a.a. \(\mu \in \fP (\Theta)\) satisfy (a.iii) from Lemma~\ref{lem: mg chara}. Take \(g \in C^2_c(\bR; \bR)\) and \(v \in \V^*\).
	For \(k > 0\) and \(r \in [0, T]\), we define \(\oM_r^k \colon \Theta \times \fP (\Theta) \to \bR\) by 
	\begin{align*}
		\oM_r^k (\omega, m, \mu) := g ({_\V}\langle \omega(r), v \rangle_{\V^*}) - \int_0^r\int \, \Big[ (- k) \vee \mathcal{L}_{g, v} (f, s, \omega(s), \mu_s) \wedge k \Big] \,m (ds, df), 
	\end{align*}
	where \(\mu_s := \mu \circ X^{-1}_s\).
	\begin{lemma}\label{lem: oM mit k cont}
		Suppose that Condition~\ref{cond: main1}~\textup{(ii)} holds. 
		For every \(k > 0\) and \(r \in [0, T]\), the map \(\oM^k_r\) is continuous from \(\Theta \times \NN\) into \(\bR\), where we endow \(\NN\) with the subspace topology coming from the Wasserstein space \(\fP^\al (\Theta)\).
	\end{lemma}
	\begin{proof}
		Take a sequence \((\omega^n, m^n, \mu^n)_{n = 0}^\infty \subset \Theta \times \NN\) with
		\(
		(\omega^n, m^n, \mu^n) \to (\omega^0, m^0, \mu^0)
		\) and notice that 
		\begin{align*}
			\big| \oM^k_r (\omega^n&, m^n, \mu^n) - \oM^k_r (\omega^0, m^0, \mu^0) \big| 
			\\&\leq \big| g ({_\V}\langle \omega^n(r), v\rangle_{\V^*}) - g ({_\V}\langle \omega^0 (r), v \rangle_{\V^*}) \big|
			\\&\qquad+  \int_0^r \sup_{f \in F} \big| (-k) \vee \mathcal{L}_{g, v} (f, s, \omega^n (s), \mu^n_s) \wedge k  
			- (-k) \vee \mathcal{L}_{g, v} (f, s, \omega^0 (s), \mu^0_s) \wedge k\big| \, ds 
			\\&\qquad + \Big| \int_0^r \int (- k) \vee \mathcal{L}_{g, v} (f, s, \omega^0 (s), \mu^0_s) \wedge k\, \big( m^n (ds, df) - m^0 (ds, df) \big)\Big|
			\\&=: I_n + II_n + III_n.
		\end{align*} 
		First, as \(\omega^n \to \omega^0\) in \(\Omega\), we have \(\omega^n (r) \to \omega^0 (r)\) in \(\V\), which immediately implies that \(I_n \to 0\).
		
		Next, \(\omega^n \to \omega^0\) in \(\Omega\) also implies that \(\|\omega^n - \omega^0\|_\Y \to 0\) in Lebesgue measure. 
		By the continuity of \([0, T] \times \Omega \ni (s, \omega) \mapsto \omega (s) \in \V\), it follows that
		\(\mu^n_s \to \mu^0_s\) in \(\fP^{2 \eta}_\al (\H)\) for each \(s \in [0, T]\). Using Condition~\ref{cond: main1}~(ii) and Berge's maximum theorem (see Lemma~\ref{appendix: berge} in Appendix~\ref{sec: appendix}), we obtain that the integrand in \(II_n\) converges to zero in Lebesgue measure. As it is bounded (by \(2k\)), the dominated convergence theorem yields that \(II_n \to 0\). 
		
		Finally, as the integrand in \(III_n\) is continuous in the \(F\)-variable for each fixed time point, \(III_n \to 0\) follows from Lemma~\ref{appendix: stable} in Appendix~\ref{sec: appendix}. The proof is complete.
	\end{proof}
	
	Take \(0 \leq s < t \leq T\) and \(\test \in \tT_s\).
	For \(\mu \in \fP (\Theta)\), define 
	\[
	\oZ^k (\mu) := \iint \big[ \oM^k_t (\omega, m, \mu) - \oM^k_s (\omega, m, \mu) \big]  \test (\omega, m) \, \mu (d \omega, dm),
	\]
	and 
	\[
	\oZ (\mu) := \liminf_{k \to \infty} \oZ^k (\mu).
	\]
	By Lemma~\ref{appendix: bogachev} in Appendix~\ref{sec: appendix}, \(\oZ^k\) and \(\oZ\) are Borel maps. 
	Thanks to \eqref{eq: cond growth drift}, we obtain that
	\begin{equation} \label{eq: bound}
		\begin{split}
			\big| \mathcal{L}_{g, v} (f, r, y, \mu_r) \big|^{\gamma} \leq C\, \Big( (1 + \N (y)) (1 + \|y\|^\beta_\H) + E^\mu \big [ \|X_r\|^\beta_\H \big] \Big)
		\end{split}
	\end{equation}
for all \(f \in F, r \in [0, T], y \in \Y\) and \(\mu \in \mathcal{P} (\Theta)\). 
	As \(Q^n \in \J (x^n)\) and \(Q^0 \in \J(x^0)\), this shows that, for \(Q^n\)-a.a. and \(Q^0\)-a.a. \(\mu \in \fP (\Theta)\),
	\[
	E^\mu \Big[ \int_0^T \int \big| \mathcal{L}_{g, v} (f, r, X_r, \mu_r) \big|^{\gamma} \, M (dr, df)\Big] < \infty. 
	\]	
	Hence, by the dominated convergence theorem (applied for the product space \([0, T] \times F \times \Theta\) with the finite measure \(M (dr, df) d \mu\)), we obtain that, for \(Q^n\)-a.a. and \(Q^0\)-a.a. \(\mu \in \fP (\Theta)\),
	\begin{align*} 
	\lim_{k \to \infty} E^\mu \Big[ \int_s^t \int \Big[ (- k) \vee \mathcal{L}_{g, v} &(f, r, X_r, \mu_r) \wedge k \Big] \, M (dr, df) \, \test \, \Big] 
	\\&= E^\mu \Big[ \int_s^t \int \mathcal{L}_{g, v} (f, r, X_r, \mu_r) \, M (dr, df) \, \test \, \Big], 
	\end{align*} 
	and consequently, also 
	\begin{align} \label{eq: oZ after bound}
	\oZ (\mu) = E^\mu \Big[ \Big( g ( {_\V}\langle X_t , v \rangle_{\V^*} ) - g ({_\V}\langle X_s, v \rangle_{\V^*} ) - \int_s^t \int \mathcal{L}_{g, v} (f, r, \oX_r, \mu_r) \, M (dr, df) \Big)\, \test\ \Big]. 
	\end{align}
 In the following, we prove that 
	\[
	E^{Q^0} \big[ | \oZ | \big] = 0.
	\]
	As \(\B, \tC\) and \(\tT_s\) are countable, this implies that \(Q^0\)-a.a. \(\mu \in \fP (\Theta)\) satisfy (a.iii) from Lemma~\ref{lem: mg chara}, completing the proof. 
	
	Notice that \(Q^n\)-a.s. \(\oZ = 0\), as \(Q^n (\cC^0 (x^n)) = 1\). Thus,
		by the triangle inequality, we observe that 
	\begin{equation} \label{eq: main triangle}
		\begin{split}
			E^{Q^0} \big [ | \oZ | \big] &\leq E^{Q^n} \big[ | \oZ^k- \oZ | \big]  + \big| E^{Q^n} \big [ | \oZ^k | \big ] - E^{Q^0} \big[ | \oZ^k | \big] \big|  + \big	| E^{Q^0} \big [ | \oZ^k | \big ] - E^{Q^0} \big [ | \oZ | \big ]\big |
			\\&=: I_{n, k} + II_{n, k} + III_{k}.
		\end{split}
	\end{equation}

Lemma~\ref{lem: oM mit k cont} and Lemma~\ref{appendix: bogachev} in Appendix~\ref{sec: appendix} show that \(\mathscr{G} \ni \mu \mapsto \oZ^k (\mu)\) is bounded and continuous and consequently, \(II_{n, k} \to 0\) as \(n \to \infty\) for every \(k > 0\). 
Using \eqref{eq: bound}, we obtain 
\begin{align*}
	I_{n, k} &\leq \int E^\mu \Big[ \int_0^T \int \big| \mathcal{L}_{g, v} (f, r, \oX_r, \mu_r) \big| \1_{\{|\mathcal{L}_{g, v} (f, r, \oX_r, \mu_r)| \, >\, k\}} \, M(dr, df) \Big] \, Q^n (d\mu)
	\\&\leq \frac{C}{k^{\gamma - 1}}, 
\end{align*}	
where \(C > 0\) depends on \(\j\) from \(\J\) but is independent of \(n\). 
Similarly, we obtain that 
\[
	III_{k} \leq \frac{C}{k^{\gamma - 1}}.
\]
Hence, \(I_{n, k} + III_{k}\to 0\) as \(k \to \infty\) uniformly in \(n\). 
Thus, choosing first a large \(k\) and taking then \(n \to \infty\) shows that \(I_{n, k} + II_{n, k} + III_{k}\) can be made arbitrarily small, which entails that \(Q^0\)-a.s. \(\oZ = 0\). In summary, \(Q^0\)-a.a. realizations satisfy (iii) from Lemma~\ref{lem: mg chara} and consequently, \(Q^0 \in \cR^0 (x^0) \subset \cR^0 (\mathscr{K})\). The proof is complete.
	\end{proof}

In the following, we endow \(\Theta^n\) with the metric formed by adding the metrics of \(\Theta\), i.e., 
\[
((\theta^1_1, \dots, \theta^1_n), (\theta^2_1, \dots, \theta^2_n)) \mapsto \sum_{k = 1}^n m_\Theta (\theta^1_k, \theta^2_k), \quad m_\Theta = \d + \r.
\]
We record another observation. 
\begin{lemma} \label{lem: cCN compact}
	Suppose that Condition \ref{cond: main1} holds.
	For every \(x \in \H\) and \(n \in \mathbb{N}\), the sets \(\cC^n (x)\) and \(\cR^n (x)\) are compact in \(\fP^\al (\Theta^n)\) and \(\fP^\al (\fP^\al (\Theta))\), respectively.
\end{lemma}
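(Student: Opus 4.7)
The plan is to first establish compactness of $\cC^n(x)$ in $\fP^\al(\Theta^n)$ via the martingale problem characterization of Lemma \ref{lem: mg chara N}, and then to deduce compactness of $\cR^n(x)$ as a continuous image.

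For relative compactness of $\cC^n(x)$ in $\fP^\al(\Theta^n)$, I would rerun the argument of Lemma \ref{lem: Rk rel comp} at the level of each marginal. The hypothesis $Q\circ\Z_n^{-1}\in\J(x)$ translates into bounds on the averaged moments $\tfrac{1}{n}\sum_{k=1}^n E^Q[\sup_s \|X^k_s\|_\H^{2\eta} + \int_0^T (\N(X^k_s)+\N_{\beta/2+1}(X^k_s))\,ds]$, so each marginal of $Q$ on $\Omega$ satisfies the same moment and modulus-of-continuity estimates (up to a factor of $n$) as those used in Step~1 of Lemma \ref{lem: Rk rel comp}, based on H\"older/Jensen, Burkholder and the growth bounds \eqref{eq: cond growth drift}--\eqref{eq: cond growth diffusion}. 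Combined with the compactness of $\m$, this gives marginal tightness in $\fP(\Theta)$ and hence joint tightness on $\Theta^n$. The strengthening to $\fP^\al(\Theta^n)$ follows from the uniform $\alpha$-moment bound on $\sup_s\|X^k_s\|_\V^\alpha+\int_0^T\|X^k_s\|_\Y^\alpha ds$ (using $\al<\alpha$ from \eqref{eq: cond constants}) and \cite[Corollary B.2]{LakSPA15}, as in Step~2 of Lemma \ref{lem: Rk rel comp}.

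For closedness of $\cC^n(x)$, take a sequence $Q^j\in\cC^n(x)$ with $Q^j\to Q^0$ in $\fP^\al(\Theta^n)$ and verify (a)--(c) of Lemma \ref{lem: mg chara N}(ii) for $Q^0$. Part (a) is preserved by weak convergence since $\{X^k_0=x,\,k=1,\dots,n\}$ is closed. Part (b) follows from Lemma \ref{lem: M closed}, applied to $Q^j\circ\Z_n^{-1}$, which converges to $Q^0\circ\Z_n^{-1}$ by continuity of $\Z_n\colon\Theta^n\to\fP(\Theta)$. The nontrivial part is (c): the candidate martingale involves $\mathcal{L}^i_{g,v^1,\dots,v^n}(f,s,\oX_s,\x_n(X_s))$, which under \eqref{eq: cond growth drift} and \eqref{eq: cond growth diffusion} is only polynomially bounded. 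I would truncate at level $\pm k$, prove continuity on tight subsets of the resulting functionals (this is the $n$-particle analogue of Lemma \ref{lem: oM mit k cont}, using Condition~\ref{cond: main1}(ii), Berge's maximum theorem and continuity of $\theta\mapsto\x_n(\theta)$), and then run the triangle-inequality argument \eqref{eq: main triangle} of Step~2 in the proof of Lemma \ref{lem: C comp}, employing the uniform $\J(x)$-moment control to dominate the truncation error as $k\to\infty$ uniformly in $j$.

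For $\cR^n(x)$, observe that $\Z_n\colon\Theta^n\to\fP^\al(\Theta)$ is Lipschitz in the $\w^\Theta_\al$-metric, since
\[
\w^\Theta_\al(\Z_n(\theta),\Z_n(\theta'))^\al \;\leq\; \frac{1}{n}\sum_{k=1}^n (\d+\mathsf{r})(\theta^k,(\theta')^k)^\al.
\]
Hence the pushforward $Q\mapsto Q\circ\Z_n^{-1}$ is continuous from $\fP^\al(\Theta^n)$ into $\fP^\al(\fP^\al(\Theta))$, and $\cR^n(x)$ is the continuous image of the compact set $\cC^n(x)$, hence compact. The expected main obstacle is the verification of (c) for $Q^0$: passage to the limit in the martingale functional is blocked by the unbounded coefficients coupled through the empirical measure, and the truncation-and-triangle-inequality scheme borrowed from Lemma \ref{lem: C comp} is what makes the argument go through.
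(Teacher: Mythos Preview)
Your proposal is correct and follows essentially the same route as the paper: relative compactness of $\cC^n(x)$ via the estimates of Lemma~\ref{lem: Rk rel comp}, closedness via the martingale characterization of Lemma~\ref{lem: mg chara N} together with the truncation-and-triangle scheme from the proof of Lemma~\ref{lem: C comp}, and compactness of $\cR^n(x)$ as a continuous image under $Q\mapsto Q\circ\Z_n^{-1}$. The paper in fact omits all details (``we omit the details for brevity'') and cites \cite[Proposition~A.1]{LakSPA15} for the continuity of the pushforward, whereas you verify it directly via a Lipschitz estimate on $\Z_n$; otherwise the arguments coincide.
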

\begin{proof}
Similar to Lemma~\ref{lem: Rk rel comp}, one proves that the set \(\cC^n (x)\) is relatively compact in \(\fP^\al (\Theta^n)\). Further, a martingale problem argument (related to Lemma~\ref{lem: mg chara N}, see also the proof of Lemma~\ref{lem: C comp}) shows that \(\cC^n (x)\) is closed in \(\fP^\al (\Theta^n)\). We omit the details for brevity. In summary, \(\cC^n (x)\) is compact. These claims transfer directly to \(\cR^n (x)\) by the continuity of \(P \mapsto P \circ \Z_n^{-1}\) from \(\fP^\al (\Theta^n)\) into \(\fP^\al (\fP^\al(\Theta))\), which follows from Lemma~\ref{appendix: wasserstein cont} in Appendix~\ref{sec: appendix}.
\end{proof}

\subsection{An existence result}  \label{sec: existence}
Fix an \(N \in \mathbb{N}\) and a filtered probability space \((\Sigma, \mathcal{A}, (\mathcal{A}_t)_{t \in [0, T]}, P)\) that supports independent cylindrical Brownian motions \(W^1, \dots, W^N\) over \(\U\) and predictable kernel \(\q^1, \dots, \q^N\) from \([0, T] \times \Sigma\) into \(F\).
We define the product setup
\[
\Psi := \Sigma \times \Omega^N, \quad \mathcal{G} := \mathcal{A} \otimes \mathcal{F}^N, \quad \mathcal{G}_t := \bigcap_{s > t}\,(\mathcal{A}_s \otimes \mathcal{F}^N_s). 
\]
With little abuse of notation, let \(X = (X^1, \dots, X^N)\) be the projection to the second coordinate, and extend \(W^1, \dots, W^N\) in the obvious manner to the product setup.
The following can be seen as an extension of \cite[Theorem~4.6]{GRZ} to a setting with random coefficients. The result is very much in the spirit of the seminal paper \cite{jacod1981weak} whose ideas we also adapt.

\begin{proposition} \label{prop: existence}
Assume that Condition~\ref{cond: main1} holds and fix \(x \in \H\). Then, there exists a probability measure \(Q\) on \((\Psi, \mathcal{G}, (\mathcal{G}_{t})_{t \in [0, T]})\) such that \(W^1, \dots, W^N\) are independent cylindrical Brownian motions under \(Q\) and \(Q\)-a.s., for all \(k = 1, \dots, N, t \in [0, T]\) and \(v \in \V^*\), 
\begin{equation} \label{eq: existence identity} \begin{split}
    {_\V}\langle X^k_t, v \rangle_{\V^*} =  {_\V}\langle x, v \rangle_{\V^*} &+ \int_0^t \blv \Big \langle \int b (f, s, \oX^k_s, \x_N (X_s)) \q^k_s (df), v \Big \rangle_{\V^*} \, ds
\\&+ \Big \langle \int_0^t \us (\q_s^k, s, \oX^k_s, \x_N (X_s)) d W^k_s, v \Big\rangle_\H.
\end{split} 
\end{equation}
Moreover, \(Q \circ \Z_N (X, K)^{-1} \in \J (x)\) (where we take \(K := (\delta_{\q^1_t} dt, \dots, \delta_{\q^N_t} dt)\)) and there exists a probability transition kernel \(\mathscr{Q}\) such that 
\begin{align}\label{eq: kernel dec}
Q (d z, d \omega) = \mathscr{Q} (z, d \omega) P (dz)
\end{align}
and \(\mathscr{Q} (\, \cdot\,, G)\) is \(P\)-a.s. \(\mathcal{A}_t\)-measurable for every \(G \in \mathcal{F}^N_t\). 
\end{proposition}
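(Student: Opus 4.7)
The approach adapts the Galerkin--monotonicity construction of \cite[Theorem~4.6]{GRZ} to a product setup in which the coefficients are random through the controls $\q^k$, in the spirit of Jacod--M\'emin \cite{jacod1981weak}: the whole construction is carried out on $\Psi = \Sigma \times \Omega^N$ with $(\Sigma, \mathcal{A}, (\mathcal{A}_t), P)$ held fixed, so that $W^k$ and $\q^k$ are never disturbed. Concretely, I would fix an increasing sequence of finite-dimensional subspaces $\V^*_m \subset \V^*$ whose union is dense in $\V^*$, let $\Pi_m$ denote the associated projection, and replace $b, \sigma$ by suitable $\Pi_m$-projected (and, if needed, mollified) versions that are Lipschitz. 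Standard finite-dimensional It\^o theory on $\Psi$, driven by the given $W^k$ and $\q^k$, produces a strong, $\mathbf{G}$-adapted solution $(X^{k,m})_{k=1}^N$ of the regularized Galerkin system, whose law on $\Psi$ I denote $Q_m$. By construction, the $\Sigma$-marginal of $Q_m$ equals $P$ and the $W^k$ remain independent cylindrical $\mathbf{G}$-Brownian motions. It\^o's formula applied to $\|X^{k,m}\|_\H^{2p}$ with $p \in \{1, \tfrac{\beta}{2}+1, \eta\}$, combined with the coercivity~\eqref{eq: cond coe}, the growth bounds~\eqref{eq: cond growth drift}--\eqref{eq: cond growth diffusion}, Gronwall's lemma and the Burkholder--Davis--Gundy inequality, yields
\[
E^{Q_m}\!\Big[\sup_{s\in[0,T]}\|X^{k,m}_s\|_\H^{2\eta}+\int_0^T\bigl(\N(X^{k,m}_s)+\N_{\beta/2+1}(X^{k,m}_s)\bigr)ds\Big] \le \j(x), \quad k=1,\dots,N,
\]
the baroque shape of $\j$ in the definition of $\J(x)$ being engineered precisely to absorb these constants.

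Combined with the time-regularity argument from the proof of Lemma~\ref{lem: Rk rel comp}, Condition~\ref{cond: main1}(i) and \cite[Lemma~4.3]{GRZ}, these bounds give tightness of $(Q_m)$ on $\Psi$, so a weak subsequential limit $Q_m \Rightarrow Q$ exists. The limit is identified through the controlled martingale problem in the style of Lemma~\ref{lem: mg chara N}, adapted to the filtered product space: for every $v \in \V^*$, $g \in C^2_c(\bR;\bR)$ and $k$, the process
\[
g({_\V}\langle X^k_\cdot, v\rangle_{\V^*}) - \int_0^\cdot\!\!\int \mathcal{L}_{g, v}(f, s, \oX^k_s, \x_N(X_s))\, \q^k_s(df)\, ds
\]
is a $Q$-$\mathbf{G}$-martingale; the argument uses the continuity part of Condition~\ref{cond: main1}(ii) together with a truncation device modelled on~\eqref{eq: bound}--\eqref{eq: main triangle} from the proof of Lemma~\ref{lem: C comp}. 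The Brownian character and mutual independence of $W^1, \dots, W^N$ under $Q$ survive the weak limit because the $\Sigma$-marginal of every $Q_m$, and hence of $Q$, equals $P$. The representation~\eqref{eq: existence identity} is then recovered from the martingale problem via \cite[Corollary~6]{OndRep}, exactly as in the proof of Lemma~\ref{lem: mg chara}, and $Q \circ \Z_N(X, K)^{-1} \in \J(x)$ follows from the uniform energy bound via a lower semicontinuity argument analogous to Lemma~\ref{lem: M closed}.

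The kernel decomposition $Q(dz, d\omega) = \mathscr{Q}(z, d\omega) P(dz)$ exists because $Q$ has $P$-marginal on $\Sigma$. \emph{The principal obstacle} is to preserve simultaneously the adaptedness $\mathscr{Q}(\cdot, G) \in \mathcal{A}_t$ for $G \in \mathcal{F}^N_t$ and the $\mathbf{G}$-Brownian character of $W^k$ in the weak limit, since plain weak convergence on $\Psi$ does not by itself preserve conditional laws given $(\mathcal{A}_t)$. Following \cite{jacod1981weak}, I would select the approximating kernels $\mathscr{Q}_m$ measurably in $z$ --- for instance via a Kuratowski--Ryll-Nardzewski selection applied to the compact-valued multifunction assigning to $z$ the set of solutions of the SPDE with frozen control $\q^k(z, \cdot)$, compact by the analogue of Lemma~\ref{lem: cCN compact} --- and then verify that adaptedness is inherited at the limit because $\mathscr{Q}_m(z, \cdot)|_{\mathcal{F}^N_t}$ depends on $z$ only through $(\q^k(z, \cdot)|_{[0,t]}, W^k(z, \cdot)|_{[0,t]})$. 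The $\V^*$-duality underlying Condition~\ref{cond: main1} is essential here, because $\us$ enters only through~\eqref{eq: os}, which blocks a direct pointwise innovation identification.
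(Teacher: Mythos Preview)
Your overall plan --- Galerkin approximation on the product space $\Psi$, a priori energy bounds, tightness, and identification of the limit via the controlled martingale problem --- matches the paper's in outline. The crucial divergence is in how you pass to the limit while keeping control of the $\Sigma$-coordinate.

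You take plain weak convergence $Q_m \Rightarrow Q$ on $\Psi$ and then try to recover the adapted kernel decomposition~\eqref{eq: kernel dec} after the fact. As you yourself flag, this is where the argument strains: weak convergence preserves the $\Sigma$-marginal $P$, but it does \emph{not} preserve conditional laws, so there is no reason the disintegration of $Q$ should inherit the $\mathcal{A}_t$-measurability enjoyed by the $\mathscr{Q}_m$. Your proposed fix via Kuratowski--Ryll-Nardzewski selection does not close this gap: a measurable selection would produce \emph{some} adapted kernel into $\fP(\Omega^N)$, but not the kernel disintegrating your particular limit $Q$, and nothing in plain weak convergence lets you conclude that $\mathscr{Q}_m(z,\cdot)$ converges (for $P$-a.e.\ $z$, or in any useful sense) to $\mathscr{Q}(z,\cdot)$. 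Relatedly, the $\Sigma$-marginal being $P$ only tells you that each $W^k$ has Brownian \emph{law}; it does not by itself make $W^k$ a $\mathbf{G}$-Brownian motion under $Q$, since the limit could couple $X$ to the future of $W$.

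The paper sidesteps all of this by working with \emph{weak-strong convergence} in the sense of Jacod--M\'emin \cite{SPS_1981__15__529_0,jacod1981weak} from the outset: one tests against bounded Carath\'eodory functions (measurable in $z\in\Sigma$, continuous in $\omega\in\Omega^N$), which pins the $\Sigma$-coordinate. Tightness of $Q_m\circ X^{-1}$ on $\Omega^N$, together with the fixed $\Sigma$-marginal $P$, gives weak-strong relative compactness \cite[Theorem~2.5]{CPS23}, and the machinery of \cite[p.~190]{jacod1981weak} then guarantees that the limit is again ``very good'', i.e.\ admits an adapted kernel decomposition of type~\eqref{eq: kernel dec}; the $\mathbf{G}$-Brownian property of $W^k$ under $Q$ follows from \cite[Lemma~2.17]{jacod1981weak}. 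The paper also runs a two-stage approximation (a truncation index $m$ rendering the Galerkin coefficients bounded so that \cite[Theorem~1.8]{jacod1981weak} applies directly, then the Galerkin index $n$), and identifies the driving noise at the intermediate stage through a \emph{joint} martingale test $g(\langle W^{k,n}_t,u\rangle_{\U_n}+\langle X^k_t,h\rangle_{\H_n})$ rather than one involving $X^k$ alone. Your route becomes correct if you replace weak convergence by weak-strong convergence throughout; without that, the adaptedness of $\mathscr{Q}$ remains unproved.
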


The proof of Proposition~\ref{prop: existence} is technical, though many steps resemble those in standard settings without random coefficients. The core idea is to construct an approximating sequence, establish its tightness in a suitable topology, and identify an arbitrary accumulation point as solution. The main difficulties stem from the infinite-dimensional setting and the requirement to work on a fixed probability space to accommodate random coefficients. 
	
	The proof proceeds in six steps. In Step 1, we project \(b\) and \(\bar{\sigma}\) to finite-dimensional spaces and render them bounded via cut-off functions. In Step 2, the classical existence theorem of Jacod--M{\'e}min yields solutions to the finite-dimensional stochastic equations corresponding to these approximating coefficients. In Step 3, we establish tightness of this approximating sequence in the so-called weak-strong topology (defined in the preliminary Step 0). In Step 4, we initiate a martingale problem argument to show that every accumulation point solves the desired equation. Step 5 is dedicated to proving the claim \(Q \circ \Z_N (X, K)^{-1} \in \J (x)\), which is first established for the approximating sequence. Finally, in Step 6, we conclude the martingale problem argument and pass the claim from Step 5 to the limit.

\begin{proof}[Proof of Proposition~\ref{prop: existence}]
	The proof of this proposition is based on ideas from \cite[Theorem~4.6]{GRZ} and \cite[Theorem~1.8]{jacod1981weak}.
	
	\smallskip\noindent
{\em Step 0:} Recall from Appendix~\ref{sec: app ws conv} that a sequence \((Q^n)_{n = 1}^\infty\) of probability measures on \((\Psi, \mathcal{G})\) is said to converge in the weak-strong sense to a probability measure \(Q^0\) on \((\Psi, \mathcal{G})\) if 
\[
E^{Q^n} \big[ g \big] \to E^{Q^0} \big[ g \big]
\]
for all bounded Carath\'eodory functions from \(\Psi\) into \(\bR\), i.e., all bounded functions \(\Psi \to \bR\) that are \(\mathcal{G}\)-measurable in the \(\Sigma\)-variable and continuous in the \(\Omega^N\)-variable. 

\smallskip 
\noindent
{\em Step 1:} Let \(\{\ell_n \colon n \in \mathbb{N}\} \subset \V^*\) be an orthonormal basis of \(\H\) such that 
\[
\| \Pi_n x \|_\V \leq C \|x\|_\V, \quad \forall \, n \in \mathbb{N}, x \in \V, 
\]
where 
\[
\Pi_n x := \sum_{k = 1}^n {_\V} \langle x, \ell_k \rangle_{\V^*} \ell_k, \quad x \in \V.
\]
Such an orthonormal basis exists by \cite[Lemma~4.4]{GRZ}. Furthermore, let \(\{ g_n \colon n \in \mathbb{N}\} \subset \U\) be an orthonormal basis of~\(\U\). 
Now, set 
\begin{align*}
\H_n &:= \on{span} \, \{ \ell_1, \dots, \ell_n \} \subset \V^* \subset \Y \subset \H \subset \V, \\ 
\U_n &:= \on{span} \, \{ g_1, \dots, g_n\} \subset \U.
\end{align*}
We define the coefficients \(b_n^k \colon \Sigma \times [0, T] \times \H_n^N \to \H_n\) and \(\sigma_n^k \colon \Sigma \times [0, T] \times \H_n^N \to L (\U_n; \H_n)\) by 
\begin{align*}
b_n^k (w, t, x_1, \dots, x_N) &:= \Pi_n\, \int b (f, t, x_k, \x_N (x)) \, \q^k_t (df) (w), \\
\sigma_n^k (w, t, x_1, \dots, x_N) &:= \Pi_n\, \us (\q^k_t (w), t, x_k, \x_N (x)),
\end{align*}
where \(x = (x_1, \dots, x_N) \in \H_n^N\). 
Let us summarize some properties of \(b_n^k\) and \(\sigma^k_n\). First, for every \((w, t) \in \Sigma \times [0, T]\), the maps 
\[
x \mapsto b^k_n (w, t, x), \quad x \mapsto \sigma^k_n (w, t, x)
\]
are continuous by Condition~\ref{cond: main1}~(ii). Moreover, thanks to Condition~\ref{cond: main1}~(iii), we also have 
\begin{equation} \label{eq: coercivity finite}
	\begin{split}
\langle b^k_n (w, t, x), x_k \rangle_{\H_n}
&\leq \lambda \Big( 1 + \|x_k\|_{\H_n}^2 + \frac{1}{N} \sum_{i = 1}^N \|x_i\|^2_{\H_n} \Big) - \N (x_k), \\
	\| \sigma^k_n (w, t, x) \|_{L_2 (\U_n; \H_n)}^2 &\leq \lambda \Big( 1 + \|x_k\|_{\H_n}^2 + \frac{1}{N} \sum_{i = 1}^N \|x_i\|^2_{\H_n} \Big).
\end{split}
\end{equation}
For \(m > 0\), let \(\phi_m \in C^\infty_c (\bR; [0, 1])\) be a cutoff function such that 
\[
\phi_m (y) = \begin{cases} 1, & |y| \leq m, \\ 0, & |y| \geq 2m.\end{cases}
\]
We also set 
\begin{align*}
	b^k_{n, m} (w, t, x) := \phi_m (\|x\|_{\H_n^N})\, b^k_n (w, t, x), \quad \sigma^k_{n, m} (w, t, x) := \phi_m (\|x\|_{\H_n^N})\, \sigma^k_n (w, t, x), 
\end{align*}
where \(\|x\|_{\H_n^N} = \sum_{i = 1}^N \|x_i\|_{\H_n}\). 
Notice that \(b^k_{n, m}\) and \(\sigma^k_{n, m}\) have the same continuity properties as \(b^k_n\) and \(\sigma^k_n\). Furthermore, \(b^k_{n, m}\) and \(\sigma^k_{n, m}\) are bounded, which follows from Condition~\ref{cond: main1}~(ii). 

\smallskip
\noindent
{\em Step 2:} 
By Lemma~\ref{appendix: existence SDE} in Appendix~\ref{sec: appendix}, there exists a probability measure \(Q^{n, m}\) on the filtered space \((\Psi, \mathcal{G}, (\mathcal{G}_{t})_{t \in [0, T]})\) such that \(Q^{n, m}\) admits a decomposition of the type \eqref{eq: kernel dec}, including the measurability property explained below \eqref{eq: kernel dec}, \(W^1, \dots, W^N\) are independent cylindrical Brownian motions under \(Q^{n, m}\) (which follows from the kernel decomposition, see Remark~\ref{appendix: rem factorization}~(a) in Appendix~\ref{sec: appendix}) and \(Q^{n, m}\)-a.s., for all \(k = 1, \dots, N\) and~\(t \in [0, T]\), 
\begin{align*}
X^k_t = \Pi_n x + \int_0^t b^k_{n, m} (s, X_s) \, ds + \int_0^t \sigma^k_{n, m} (s, X_s) \, d W^{k, n}_s,  
\end{align*}
where
\[
W^{k, n} := \sum_{i = 1}^n\, \langle W^k, g_i \rangle_\U \, g_i.
\]
In particular, each \(X^k\) is \(Q^{n, m}\)-a.s. \(\H_n\)-valued (in particular, finite dimensional). 

\smallskip
\noindent
{\em Step 3:} 
Thanks to \eqref{eq: coercivity finite}, standard arguments (see the proof of \cite[Theorem~C.3]{GRZ} for details) yield that the set \(\{Q^{n, m} \circ X^{-1} \colon m \in \mathbb{N}\} \subset \fP (C ([0,T]; \H^N_n))\) is relatively compact in \(\mathcal{P}(C ([0, T]; \H^N_n))\).
Consequently, as the \(\Sigma\)-marginal of each measure from \((Q^{n, m})_{m = 1}^\infty\) coincides with \(P\), Lemma~\ref{appendix: weak strong relative comp} in Appendix~\ref{sec: appendix} yields the existence of a subsequence \((Q^{n, N_m})_{m = 1}^\infty\) that converges in the weak-strong sense to a probability measure~\(Q^{n}\). In view of Remark~\ref{appendix: rem factorization}~(b) in Appendix~\ref{sec: appendix}, the weak-strong limit \(Q^n\) also admits a decomposition of the form~\eqref{eq: kernel dec}, including the measurability properties described below, and, by Remark~\ref{appendix: rem factorization}~(a) in Appendix~\ref{sec: appendix}, the processes \(W^1, \dots, W^N\) remain independent cylindrical Brownian motions under \(Q^n\).

\smallskip
\noindent
{\em Step 4:} 
For \(g \in C^2_b (\bR; \bR)\), \(u \in \U_n\) and \(h \in \H_n\), we set 
\begin{align*}
\mathsf{K}^k_{n, m} &(w, \omega, t) 
\\&:= g ( \langle W^{k, n}_t (w), u \rangle_{\U_n} + \langle X^k_t (\omega), h \rangle_{\H_n} ) - g( \langle W^{k, n}_0 (w), u \rangle_{\U_n} + \langle X^k_0 (\omega), h \rangle_{\H_n} ) \\&\qquad- \int_0^t \Big( g' ( \langle W^{k, n}_s (w) , u \rangle_{\U_n} + \langle X^k_s (\omega), h \rangle_{\H_n} ) \langle b^k_{n, m} (w, s, X_s (\omega)), h \rangle_{\H_n}
\\&\qquad\hspace{0.75cm}+ \frac{1}{2}\, g'' ( \langle W^{k, n}_s (w), u \rangle_{\U_n} + \langle X^k_s (\omega), h \rangle_{\H_n} ) \| u + (\sigma^k_{n, m})^* (w, s, X_s (\omega)) h \|^2_{\U_n} \Big) \, ds.
\end{align*}
Notice that, for each \(t \in [0, T]\) and \(w \in \Sigma\), the map \(\omega \mapsto \mathsf{K}^k_{n, \infty} (w,\omega, t)\) is continuous. Furthermore, whenever \((\omega^m)_{m = 1}^\infty \subset C ([0, T]; \H^N_n)\) is a sequence that converges uniformly, it also follows that 
\[
\big| \mathsf{K}^k_{n, m} (w, \omega^m, t) - \mathsf{K}^k_{n, \infty} (w, \omega^m, t)  \big| \to 0, \quad \text{as } m \to \infty,
\]
where we can apply the dominated convergence theorem thanks to the continuity of \(b^k_n\) and \(\sigma^k_n\).
Hence, by Lemma~\ref{appendix: remmert} in Appendix~\ref{sec: appendix}, for every compact set \(K \subset C ([0, T]; \H^N_n)\), 
\[
\sup_{\omega \in K} \big| \mathsf{K}^k_{n, m} (w, \omega, t) - \mathsf{K}^k_{n, \infty} (w, \omega, t)  \big| \to 0, \quad \text{as } m \to \infty. 
\]
By Prohorov's theorem, for every \(\varepsilon > 0\), there exists a compact set \(K = K (\varepsilon) \subset C ([0, T]; \H^N_n)\) such that 
\[
\sup_{m \in \mathbb{N}}Q^{n, N_m} (X \not \in K) \leq \varepsilon. 
\]
Hence, for every \(\varepsilon' > 0\), 
\begin{align*} 
Q^{n, N_m} \big( \big| \mathsf{K}^k_{n, m} (\, \cdot \,, t) - \mathsf{K}^k_{n, \infty} (\, \cdot \,, t) \big| \geq \varepsilon'\big) &\leq P \Big( \sup_{\omega \in K} \big| \mathsf{K}^k_{n, m} (\, \cdot \,, \omega, t) - \mathsf{K}^k_{n, \infty} (\, \cdot \,, \omega, t)  \big| \geq \varepsilon' \Big) + \varepsilon
\\&\to \varepsilon, \quad m \to \infty. 
\end{align*} 
Consequently, as \(\varepsilon > 0\) was arbitrary, 
\[
Q^{n, N_m} \big( \big| \mathsf{K}^k_{n, m} (\, \cdot \,, t) - \mathsf{K}^k_{n, \infty} (\, \cdot \,, t) \big| \geq \varepsilon'\big) \to 0, \quad m \to \infty, \ \forall \, \varepsilon' > 0.
\]
Using \eqref{eq: coercivity finite}, it follows from a standard Gronwall argument (see, e.g., \cite[pp. 1762 -- 1763]{GRZ}) that there exists a constant \(C > 0\), independent of \(n\) and \(m\), such that 
\begin{align}\label{eq: second moment}
E^{Q^{n, m}} \Big[ \sup_{s \in [0, T]} \|X_s\|^2_{\H_n^N} \Big] \leq C. 
\end{align}
Notice also that, by It\^o's formula, 
\begin{align} \label{eq: ito iden}
	\mathsf{K}^k_{n, m} = \int_0^\cdot g' ( \langle W^{k, n}_s, u \rangle_{\U_n} + \langle X^k_s, h \rangle_{\H_n} ) \langle u + (\sigma^k_{n, m})^* (s, X_s) h, d W^{k, n}_s \rangle_{\U_n}.
\end{align}
Hence, taking \eqref{eq: second moment}, \eqref{eq: ito iden} and the second line from \eqref{eq: coercivity finite} into consideration, we may conclude that (for each \(n \in \mathbb{N}\)) the family \(\{ Q^{n, m} \circ \mathsf{K}^k_{n, m} (\, \cdot\,,\, \cdot\, , s)^{-1} \colon s \in [0, T], m \in \mathbb{N}\}\) is uniformly integrable. 

In summary, we deduce from Lemma~\ref{appendix: convergence martingale} in Appendix~\ref{sec: appendix} that each \(\mathsf{K}^k_{n, \infty}\) is a \(Q^n\)-martingale, and Lemma~\ref{appendix: JS} in Appendix~\ref{sec: appendix} yields that, under \(Q^n\), 
\begin{align*}
	X^k = \Pi_n x + \int_0^\cdot b^k_{n} (s, X_s) \, ds + \int_0^\cdot \sigma^k_{n} (s, X_s) \, d W^{k, n}_s.
\end{align*}
Here, we remark that the structure of \(\mathsf{K}^k_{n, \infty}\) covered the bivariate process \((W^{k, n}, X^k)\), cf. \cite[Theorem~6.3]{jacod80} for a similar strategy.

\smallskip
\noindent
{\em Step 5:} Next, we show that \(Q^n \circ \Z_N (X, K)^{-1} \in \J (x)\). 
Fix some \(p \geq 1\). 
By It\^o's formula, \eqref{eq: coercivity finite} and Young's inequality for products, we obtain that
\begin{equation} \label{eq: big ineq ito} \begin{split} 
	\|X^k_t\|^{2p}_{\H} &= \|\Pi_n x \|^{2p}_{\H} + 2p \int_0^t \|X^k_s\|^{2 (p - 1)} \langle b^k_n (s, X_s), X^k_s \rangle_\H \, ds 
	\\&\qquad + p \int_0^t \|X_s^k\|^{2 (p - 1)}_\H \| \sigma^k_n (s, X_s) \|^2_{L_2 (\U_n; \H_n)} \, ds 
	\\&\qquad + 2 p ( p - 1 ) \int_0^t \|X^k_s\|^{2 ( p - 2 )} \| (\sigma^k_n)^* (s, X_s) X^k_s \|^2_{\U} \, ds 
	\\&\qquad + 2 p \int_0^t \| X^k_s \|^{2 (p - 1)}_\H \langle X^k_s, \sigma^k_n (s, X_s) \, d W^{k, n}_s \rangle_\H
	\\&\leq \|x \|^{2p}_{\H} +  \int_0^t \|X^k_s\|^{2 (p - 1)} \Big[3p \lambda\, \Big( 1 + \|X^k_s\|^2_\H + \frac{1}{N} \sum_{i = 1}^N \|X^i_s\|^2_{\H} \Big) - 2p \N (X^k_s) \Big] \, ds 
	\\&\qquad + 2 p ( p - 1 ) \int_0^t \|X^k_s\|^{2 ( p - 1 )} \, \lambda\, \Big( 1 + \|X^k_s\|^2_\H + \frac{1}{N} \sum_{i = 1}^N \|X^i_s\|^2_{\H} \Big) \, ds 
	\\&\qquad + 2 p \int_0^t \| X^k_s \|^{2 (p - 1)}_\H \langle X^k_s, \sigma^k_n (s, X_s) \, d W^{k, n}_s \rangle_\H
		\\&\leq \|x \|^{2p}_{\H} + \int_0^t \Big[\lambda p (2p + 1) \Big( 1 + \Big(3 - \frac{1}{p}\Big) \|X^k_s\|^{2p}_\H + \frac{1}{Np} \sum_{i = 1}^N \|X^i_s\|^{2p}_{\H} \Big) - 2p \N_p (X^k_s) \Big] \, ds 
	\\&\qquad + 2 p \int_0^t \| X^k_s \|^{2 (p - 1)}_\H \langle X^k_s, \sigma^k_n (s, X_s) \, d W^{k, n}_s \rangle_\H.
\end{split}
\end{equation} 
Consequently, using Burkhoder's inequality (\cite[Theorem~2.5]{krylov_rozovskii}), we obtain that 
\begin{equation} \label{eq: first main to kappa} \begin{split}
	\frac{1}{N} \sum_{k = 1}^N E^{Q^n} \Big[ \sup_{s \in [0, t]} \|X^k_s\|^{2p}_\H \Big] 
	&\leq \|x\|^{2p}_\H + E^{Q^n} \Big[ \int_0^t \lambda p (2p + 1) \Big( 1 + \frac{3}{N} \sum_{i = 1}^N \|X^i_s\|^{2p}_{\H} \Big)  \, ds \Big]
	\\&\qquad + \frac{6 p}{N} \sum_{k = 1}^N  E^{Q^n} \Big[ \Big(\int_0^t \|X^k_s\|^{4 p - 2}_\H \| \sigma^k_n (s, X_s) \|^2_{L_2 (\U_n; \H_n)} \, ds \Big)^{1/2} \Big].
\end{split}
\end{equation}
Using again \eqref{eq: coercivity finite} and Young's inequality for products, we obtain that 
\begin{align*}
	E^{Q^n} & \Big[ \Big(\int_0^t \|X^k_s\|^{4 p - 2}_\H \| \sigma^k_n (s, X_s) \|^2_{L_2 (\U_n; \H_n)} \, ds \Big)^{1/2} \Big] 
	\\&\leq E^{Q^n} \Big[ \sup_{r \in [0, t]} \|X^k_r\|^p_\H \Big(\int_0^t \|X^k_s\|^{2 (p - 1)}_\H \| \sigma^k_n (s, X_s) \|^2_{L_2 (\U_n; \H_n)} \, ds \Big)^{1/2} \Big]
	\\&\leq \frac{1}{12 p} E^{Q^n} \Big[ \sup_{r \in [0, t]} \|X^k_r\|^{2p}_\H \Big] + 3 p E^{Q^n} \Big[ \int_0^t \|X^k_s\|^{2 (p - 1)}_\H \| \sigma^k_n (s, X_s) \|^2_{L_2 (\U_n; \H_n)} \, ds \Big]
		\\&\leq \frac{1}{12 p} E^{Q^n} \Big[ \sup_{r \in [0, t]} \|X^k_r\|^{2p}_\H \Big] + 3 p E^{Q^n} \Big[ \int_0^t \lambda \Big(  1 + \Big(3 - \frac{1}{p}\Big) \|X^k_s\|^{2p}_\H + \frac{1}{Np} \sum_{i = 1}^N \|X^i_s\|^{2p}_{\H} \Big) \, ds \Big].
\end{align*}
Together with \eqref{eq: first main to kappa}, we conclude that 
\begin{align*}
		\frac{1}{N} \sum_{k = 1}^N E^{Q^n} \Big[ \sup_{s \in [0, t]} \|X^k_s\|^{2p}_\H \Big] &\leq 2 \|x\|^{2p}_\H + \int_0^t 2 \lambda (p (2p + 1) + 18p^2)\Big( 1 + \frac{3}{N} \sum_{i = 1}^N E^{Q^n} \big[ \|X^i_s\|^{2p}_{\H} \big] \Big) \, ds 
		\\
		&\leq 2 \|x\|^{2p}_\H + \int_0^t6  \lambda p (1 + 20 p)\Big( 1 + \frac{1}{N} \sum_{i = 1}^N E^{Q^n} \Big[ \sup_{r \in [0, s]} \|X^i_r\|^{2p}_{\H} \Big] \Big) \, ds.
\end{align*}
Now, applying Gronwall's lemma\footnote{Of course, Gronwall's lemma requires that the involved function is integrable. In our case, it is well-known that the expectations are finite (see, e.g., \cite{GRZ} or \cite[Lemma~5.1.5]{liu_rockner}). Alternatively, it would also be possible to work with stopping times. At this point, our main interest is an explicit upper bound to relate \(Q^n\) to the set~\(\J(x)\).} to
\[
t \mapsto 1 + \frac{1}{N} \sum_{k = 1}^N E^{Q^n} \Big[ \sup_{s \in [0, t]} \|X^k_s\|^{2p}_\H \Big]
\]
yields that
\begin{align*}
	\frac{1}{N} \sum_{k = 1}^N E^{Q^n} \Big[ \sup_{s \in [0, T]} \|X^k_s\|^{2p}_\H \Big] &\leq (1 + 2 \|x\|^{2p}_\H  ) e^{6  \lambda p T (1 + 20 p)} - 1.
\end{align*}
Finally, taking expectation in \eqref{eq: big ineq ito}, and using the martingale property of the It\^o integral, we observe that 
\begin{align*}
0 \leq 	\frac{1}{N} \sum_{k = 1}^N E^{Q^n} \big[ \|X^k_T\|^{2p}_\H \big] \leq \|x\|^{2p}_\H  &+ \int_0^T \lambda p (2p + 1) \Big(1 + \frac{3}{N} \sum_{i = 1}^N  E^{Q^n} \big[ \|X^i_s\|^{2p}_\H \big] \Big) \, ds \\&- \frac{2p}{N} \sum_{k = 1}^N  E^{Q^n} \Big[ \int_0^T \N_p (X^k_s) \, ds \Big], 
\end{align*}
and consequently,
\begin{align*}
	\frac{1}{N} \sum_{k = 1}^N  &E^{Q^n} \Big[ \int_0^T \N_p (X^k_s) \, ds \Big] 
	\leq \frac{1}{2p} \Big[ \|x\|^{2p}_\H + 3 \lambda p T (2p + 1) (1 + 2 \|x\|^{2p}_\H  ) e^{6  \lambda p T (1 + 20 p)}  \Big].
\end{align*}
In summary, putting these estimates together, it follows that \(Q^n \circ \Z_N(X, K)^{-1} \in \J(x)\).

\smallskip
\noindent
{\em Step 6:} We are in the position to sketch the final step of the proof. First of all, it follows similar to Step~1 of the proof for Lemma~\ref{lem: Rk rel comp} that the family \(\{ Q^n \circ X^{-1} \colon n \in \mathbb{N}\}\) is relatively compact in \(\fP (\Omega^N)\). Thus, as in Step~3 of this proof, there exists a subsequence \((Q^{M_n})_{n = 1}^\infty\) that converges to a probability measure \(Q\) in the weak-strong sense. As before, the measure \(Q\) admits the decomposition \eqref{eq: kernel dec} and  \(W^1, \dots, W^N\) remain independent cylindrical Brownian motions under~\(Q\) (see Remark~\ref{appendix: rem factorization} in Appendix~\ref{sec: appendix}).  
Now, by virtue of Lemma~\ref{lem: mg chara N}, we can use a martingale problem argument as in Step~4 above (see also Step~3 from the proof of \cite[Theorem~4.5]{GRZ}) to conclude that \eqref{eq: existence identity} holds under \(Q\). We omit the details for brevity. 
Finally, \(Q \circ \Z_N (X, K)^{-1} \in \J (x)\) follows from Lemma~\ref{lem: M closed} and Step~5. 
\end{proof}

The following is an immediate consequence of Proposition~\ref{prop: existence}.
\begin{corollary} \label{coro: non empty}
If Condition~\ref{cond: main1} holds, then \(\cC^n (x) \not = \emptyset\) for all \(n \in \mathbb{N}\) and \(x \in \H\). 
\end{corollary}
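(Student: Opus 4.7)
The plan is to derive the corollary as a direct pushforward consequence of Proposition~\ref{prop: existence}. First, I would fix any filtered probability space \((\Sigma, \mathcal{A}, (\mathcal{A}_t)_{t \in [0, T]}, P)\) carrying \(n\) independent cylindrical Brownian motions \(W^1, \dots, W^n\); the canonical cylindrical Wiener space on \(\U^n\) works. As the predictable kernels required by the proposition, I would make the cheapest possible choice: pick any \(f_0 \in F\) and set \(\q^k_t \equiv \delta_{f_0}\) for \(k = 1, \dots, n\). These are constant in \((t, w)\) and hence trivially predictable, which is all the proposition needs.

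Applying Proposition~\ref{prop: existence} with \(N = n\) then yields a probability measure \(Q\) on \(\Psi = \Sigma \times \Omega^n\) under which \(W^1, \dots, W^n\) are independent cylindrical Brownian motions, the identity \eqref{eq: existence identity} holds, and \(Q \circ \Z_n(X, K)^{-1} \in \J(x)\), where \(K^k(dt, df) := \q^k_t(df) \, dt \in \m\) for each \(k\). To land in \(\cC^n (x) \subset \fP(\Theta^n)\), I would push \(Q\) forward under the measurable map \(\Phi \colon \Psi \to \Theta^n\) given by
\[
\Phi(w, \omega^1, \dots, \omega^n) := \big((\omega^1, K^1(w)), \dots, (\omega^n, K^n(w))\big),
\]
and set \(\widetilde{Q} := Q \circ \Phi^{-1}\).

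Finally, I would verify the two defining properties of \(\cC^n (x)\). Condition (i) of Definition~\ref{def: C^n}, namely \(\widetilde{Q} \circ \Z_n^{-1} \in \J(x)\), is just a reformulation of \(Q \circ \Z_n(X, K)^{-1} \in \J(x)\) supplied by the proposition. For condition (ii), the key observation is that by construction \(M^k (dt, df) = \q^k_t(df) \, dt\) on the support of \(\widetilde{Q}\), so by the uniqueness of the predictable disintegration of a random measure in \(\m\) (\cite[Lemma 3.2]{LakSPA15}), we have \(\v(t, M^k, df) = \q^k_t(df)\) almost surely. Consequently, \eqref{eq: existence identity} matches exactly the integral identity required in Definition~\ref{def: C^n}(ii), where the standard extension of \((\Theta^n, \mathcal{O}^n, \mathbf{O}^n, \widetilde{Q})\) supporting \(W^1, \dots, W^n\) is obtained by adjoining the \(\Sigma\)-coordinate of \(\Psi\). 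There is no genuine obstacle here; all of the work has been done in Proposition~\ref{prop: existence}, and the only thing to check is that its output, transported from \(\Sigma \times \Omega^n\) to \(\Theta^n\), satisfies the bookkeeping in the definition of \(\cC^n (x)\).
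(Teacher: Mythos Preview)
Your proposal is correct and follows exactly the approach the paper intends: the paper simply states that the corollary is ``an immediate consequence of Proposition~\ref{prop: existence}'', and what you have written is precisely the bookkeeping that makes this immediate consequence explicit (choice of auxiliary space and kernels, pushforward to \(\Theta^n\), identification of \(\v(s,M^k)\) with \(\q^k_s\), and the standard extension). There is no difference in strategy, only in the level of detail.
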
 

\subsection{Proof of Theorem \ref{theo: main1} (i)}
The set \(\cR^n (x^n)\) is nonempty by Corollary~\ref{coro: non empty} and compact by Lemma~\ref{lem: cCN compact}. The compactness of \(\cR^0 (x^0)\) follows from Lemma~\ref{lem: C comp}. Anticipating the following section, the claim \(\cR^0 (x) \not = \emptyset\) follows from Theorem~\ref{theo: main1}~(ii).
\qed 

\subsection{Proof of Theorem \ref{theo: main1} (ii)}
	Take a sequence \((x^n)_{n = 0}^\infty \subset \H\) such that \(x^n \to x^0\) in \(\|\cdot\|_\H\) and let \((Q^n)_{n = 1}^\infty\) be such that \(Q^n \in \cR^n (x^n)\). By Lemma \ref{lem: Rk rel comp}, the set \(\bigcup_{n = 1}^\infty \cR^n (x^n) \subset \bigcup_{n, m = 1}^\infty \cR^n (x^m) = \cR ( \{x^n \colon n \in \mathbb{N}\} )\) is relatively compact in \(\fP^\al (\fP^\al(\Theta))\). Consequently, \((Q^n)_{n = 1}^\infty\) is relatively compact in \(\fP^\al (\fP^\al (\Theta))\). It remains to prove that each of its accumulation points is in \(\cR^0 (x^0)\).
	To lighten our presentation, we prove that \(Q^0\in \cR^0 (x^0)\) whenever \(Q^n \to Q^0\) in \(\fP^\al (\fP^\al (\Theta))\). The argument is based on Lemma~\ref{lem: mg chara}.
	
	First of all, 
	\begin{align*}
		Q^0 ( \{ \mu \in \W(\Theta) \colon \mu \circ X^{-1}_0 = \delta_{x^0} \} ) = 1
	\end{align*}
follows as in the proof for Lemma~\ref{lem: C comp}, i.e., (iii.a) from Lemma~\ref{lem: mg chara} holds for \(Q^0\)-a.a. \(\mu \in \fP (\Theta)\).

Lemma~\ref{lem: M closed} yields that \(Q^0 \in \J (x^0)\). In particular, (iii.b) from Lemma~\ref{lem: mg chara} holds for \(Q^0\)-a.a. \(\mu \in \fP (\Theta)\). 
	
	It remains to investigate part (iii.c) from Lemma \ref{lem: mg chara}. Take \(s, t \in \mathbb{Q}_+ \cap [0, T], s < t, g \in \tC\) and \(\test \in \tT_s\). 
	We define \(\oZ^k\) and \(\oZ\) as in the proof of Lemma~\ref{lem: C comp}. In particular, recall \eqref{eq: oZ after bound}.
We now prove that \(Q^0\)-a.s. \(\oZ = 0\). As \(\B, \tC\) and \(\tT_s\) are countable, this 
implies that almost all realizations of \(Q^0\) satisfy (iii.c) from Lemma \ref{lem: mg chara}. In summary, we then can conclude that \(Q^0 \in \cR^0 (x^0)\). 

The proof of \(Q^0\)-a.s. \(\oZ= 0\) uses a strategy we learned from \cite{bhatt1998interacting}. It is divided into two steps. First, we prove that 
\begin{align} \label{eq: first main oZ = 0}
	\lim_{n \to \infty} E^{Q^n} \big[ | \oZ | \big] = E^{Q^0} \big[ | \oZ | \big], 
\end{align} 
and afterwards, we show that 
\begin{align} \label{eq: second main oZ = 0}
	\lim_{n \to \infty} E^{Q^n} \big[ | \oZ |^2 \big] = 0.
\end{align} 
Putting \eqref{eq: first main oZ = 0} and \eqref{eq: second main oZ = 0} together yields that \(E^{Q^0} [ | \oZ | ] = 0\), proving that \(Q^0\)-a.s. \(\oZ = 0\). 

\smallskip
\noindent
{\em Step 1: Proof of \eqref{eq: first main oZ = 0}.}
By the triangle inequality, we observe that 
\begin{equation} \label{eq: main triangle}
	\begin{split}
	| E^{Q^n} [ | \oZ | ] - E^{Q^0} [ | \oZ | ] | &\leq | E^{Q^n} [ | \oZ | ] - E^{Q^n} [ | \oZ^k | ] |  \\&\hspace{1cm}+ 	| E^{Q^n} [ | \oZ^k | ] - E^{Q^0} [ | \oZ^k | ] | 
	\\&\hspace{1cm}+ 	| E^{Q^0} [ | \oZ^k | ] - E^{Q^0} [ | \oZ | ] |
	\\&=: I_{n, k} + II_{n, k} + III_{k}.
	\end{split}
\end{equation}
By Lemma~\ref{lem: oM mit k cont} and Lemma~\ref{appendix: bogachev} in Appendix~\ref{sec: appendix}, the map \(\mu \mapsto \oZ^k (\mu)\) is bounded and continuous and we get that \(II_{n, k} \to 0\) as \(n \to \infty\) for every \(k > 0\). 
Next, we discuss the term \(I_{n, k}\). 
With \(Q^n = P^n \circ \Z_n^{-1}\), \(Q^n \in \J (x^n)\) and \eqref{eq: bound}, we obtain that 
\begin{align*}
	I_{n, k} &\leq \frac{C}{n}\sum_{i = 1}^n E^{P^n} \Big[ \int_0^T \int \big| \mathcal{L}_{g, v} (f, r, X^i_r, \x_n (X_r)) \big| \1_{\{| \mathcal{L}_{g, v} (f, r, X^i_r, \x_n (X_r))| \, >\, k\}} \, M^i (dr, df) \Big]
	\\&\leq \frac{1}{k^{\gamma - 1}} \frac{C}{n}\sum_{i = 1}^n E^{P^n} \Big[ \int_0^T \int \big| \mathcal{L}_{g, v} (f, r, X^i_r, \x_n (X_r)) \big|^\gamma \, M^i (dr, df) \Big] 
	\\&\leq \frac{C}{k^{\gamma- 1}}, 
\end{align*}
where we emphasize that the constant \(C > 0\) is independent of \(n\). 
Similarly, using that \(Q^0 \in \J (x^0)\) and \eqref{eq: bound}, we obtain that 
\begin{align*}
	III_{k} \leq \frac{C}{k^{\gamma - 1}}.
\end{align*}
Together, we obtain that  \(I_{n, k} + III_{k} \to 0\) as \(k \to \infty\) uniformly in \(n\). Thus, choosing first \(k\) large and taking then \(n \to \infty\) shows that \(I_{n, k} + II_{n, k} + III_{k}\) can be made arbitrarily small, which entails that \eqref{eq: first main oZ = 0} holds. 

\smallskip
\noindent
{\em Step 2: Proof of \eqref{eq: second main oZ = 0}.}
For \(r \in [0, T]\) and \(i = 1, \dots, n\), we set
\[
\oK^{i, n}_r := g ( {_\V}\langle X^i_r, v \rangle_{\V^*}) - g ({_\V}\langle X^i_0, v\rangle_{\V^*}) - \int_0^r\int \mathcal{L}_{g, v} (f, u, X^i_u, \x_n (X_u)) \, M^i (du, df).
\]
Notice that 
\begin{align*}
	E^{Q^n} \big[ \oZ^2 \big] = \frac{1}{n^2} \sum_{i, j = 1}^n E^{P^n} \Big[ (\oK^{i, n}_t - \oK^{i, n}_s) (\oK^{j, n}_t - \oK^{j, n}_s) \, \test (X^i, M^i) \test (X^j, M^j) \Big].
\end{align*}
Recall from Definition~\ref{def: C^n} that there are independent cylindrical Brownian motions \(W^1, \dots, W^n\) such that \(P^n\)-a.s., for \(k = 1, \dots, n\), 
	\begin{align*}
		{_\V}\langle X^k_t, v \rangle_{\V^*} = {_\V} \langle x^n, v\rangle_{\V^*} &+ \int_0^t \blv \Big\langle \int b (f, s, X^k_s, \x_n (X_s)) \, \v (s, M^k, df) , v \Big \rangle_{\V^*} \, ds
		\\&+ \Big\langle \int_0^t \us (\v (s, M^k), s, X^k_s, \x_n (X_s)) \, d W^k_s, v \Big \rangle_\H.
	\end{align*}
	Thus, by It\^o's formula, we obtain that \(P^n\)-a.s.
\[
\oK^{i, n} = \int_0^\cdot g' ({_\V}\langle X^i_u, v \rangle_{\V^*}) \langle \us^* (\v(u, M^i), u, X^i_u, \x_n (X_u)) v, d W^i_u \rangle_\U.
\]
Take \(i \not = j\).
By the independence of \(W^i\) and \(W^j\), the quadratic variation of \(\oK^{i, n}\) and \(\oK^{j, n}\) vanishes. As \(\oK^{i, n}\) and \(\oK^{j, n}\) are square integrable \(P^n\)-martingales (see Lemma \ref{lem: mg chara N}), this means that the product \(\oK^{i, n} \oK^{j, n}\) is a \(P^n\)-martingale. Consequently, using that \(\oK^{i, n}, \oK^{j, n}\) and \(\oK^{i, n}\oK^{j, n}\) are \(P^n\)-martingales (recall that \(\test\) is \(\mathcal{O}_s\)-measurable), we obtain that
\begin{align*}
E^{P^n} \Big[ (\oK^{i, n}_t &- \oK^{i, n}_s) (\oK^{j, n}_t - \oK^{j, n}_s) \, \test (X^i, M^i) \test (X^j, M^j) \Big]
\\&= E^{P^n} \Big[ (\oK^{i, n}_t \oK^{j, n}_t  - \oK^{i, n}_t \oK^{j, n}_s - \oK^{i, n}_s \oK^{j, n}_t + \oK^{i, n}_s \oK^{j, n}_s) \, \test (X^i, M^i) \test (X^j, M^j) \Big]
\\&= E^{P^n} \Big[ (\oK^{i, n}_s \oK^{j, n}_s  - \oK^{i, n}_s \oK^{j, n}_s - \oK^{i, n}_s \oK^{j, n}_s + \oK^{i, n}_s \oK^{j, n}_s) \, \test (X^i, M^i) \test (X^j, M^j) \Big] = 0.
\end{align*} 
Finally, using Burkholder's inequality, \(Q^n \in \J (x^n)\) and \eqref{eq: cond growth diffusion}, we get that 
\begin{align*}
	E^{Q^n} \big[ \oZ^2 \big] &= \frac{1}{n^2} \sum_{i = 1}^n E^{P^n} \Big[ (\oK^{i, n}_t - \oK^{i, n}_s)^2 \, \test (X^i, M^i)^2 \Big]
	\\&\leq \frac{C}{n^2} \sum_{i = 1}^n E^{P^n} \Big[ (\oK^{i, n}_t - \oK^{i, n}_s)^2 \Big]
	\\&\leq \frac{C}{n^2} \sum_{i = 1}^n E^{P^n} \Big[ \int_s^t \| \us^* (\v (u, M^i), u, X^i_u, \x_n (X_u)) \|_{L_2 (\U; \H)}^2 \, du \Big]
	\\&\leq \frac{C}{n^2} \sum_{i = 1}^n E^{P^n} \Big[ \int_0^T \lambda \Big( 1+ \|X^i_u\|^2_\H + \frac{1}{n} \sum_{k = 1}^n \|X^k_u\|^2_\H \Big) \, du \Big]
	\\&\leq \frac{C}{n} \Big( 1 + \frac{1}{n} \sum_{k = 1}^n E^{P^n} \Big[ \sup_{u \in [0, T]} \|X^k_u\|^2_\H \Big] \Big)
	\\&\leq \frac{C}{n}.
\end{align*}
This bound proves \eqref{eq: second main oZ = 0}. The proof of Theorem~\ref{theo: main1}~(ii) is complete.
 \qed

	\subsection{Proof of Theorem \ref{theo: main1} (iii)}
	We use the notation from Theorem \ref{theo: main1} (iii). 
	By the compactness of \(\cR^n (x^n)\), which is due to Theorem \ref{theo: main1} (i), and standard properties of the limes superior, there exists a subsequence \((N_n)_{n = 1}^\infty\) of \(1, 2, \dots\) and measures \(Q^{N_n} \in \cR^{N_n} (x^{N^n})\) such that
	\[
	\limsup_{n \to \infty} \sup_{Q \in \cR^n (x^n)} E^Q \big[ \psi \big] = \lim_{n \to \infty} E^{Q^{N_n}} \big[ \psi \big].
	\]
	By Theorem \ref{theo: main1} (ii), there is a subsequence of \((Q^{N_n})_{n = 1}^\infty\) that converges in \(\fP^\al (\fP^\al (\Theta))\) to a measure \(Q^0 \in \cR^0 (x^0)\). Recalling that \(\psi\) is upper semicontinuous from \(\fP^\al(\Theta)\) into~\(\bR\) and that it satisfies the growth condition \eqref{eq: bound property}, we get from Lemma~\ref{lem: gen Fatou W} in Appendix~\ref{sec: appendix} that
	\[
	\lim_{n \to \infty} E^{Q^{N_n}} \big[ \psi \big] \leq E^{Q^0} \big[ \psi \big] \leq \sup_{Q \in \cR^0 (x^0)} E^Q \big[ \psi \big].
	\]
	This completes the proof.
	\qed

	\subsection{Proof of Theorem \ref{theo: main1} (iv)}
	The following strategy is inspired by the proof for \cite[Theorem~2.12]{LakSIAM17}. In particular, we learned the idea to use the Krein--Milman theorem from this reference.
	Let us start with an auxiliary result whose proof is postponed to the end of this section. 
	\begin{lemma} \label{lem: approx point mass}
		Assume that the Conditions~\ref{cond: main1} and \ref{cond: main2} hold. Let \((x^n)_{n = 0}^\infty \subset \H\) be a sequence such that \(x^n \to x^0\) in \(\|\cdot\|_\H\) and take \(P \in \cC^0 (x^0)\). Then, there exists a sequence \((Q^{n})_{n = 1}^\infty\) with \(Q^{n} \in \cR^{n} (x^{n})\) such that \(Q^{n} \to \delta_{P}\) in \(\fP^\al (\fP^\al (\Theta))\).
	\end{lemma}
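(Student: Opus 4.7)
The plan is a synchronous coupling. Given $P\in\cC^0(x^0)$, we will build, on a \emph{common} probability space, both $n$ independent copies $(X^1,\dots,X^n)$ of the McKean--Vlasov solution underlying $P$ (driven by independent Brownian motions $W^1,\dots,W^n$ and iid controls $M^1,\dots,M^n$) and an interacting $n$-particle system $(Y^1,\dots,Y^n)$ driven by the \emph{same} noises and controls, but with initial condition $x^n$ and measure argument $\x_n(Y_s)$ instead of $P^X_s$. Condition~\ref{cond: main2} then yields, through a Gronwall argument, that $Y^k$ and $X^k$ remain close in the $\X$-norm on average over $k$, while the iid $X^k$ satisfy the Wasserstein law of large numbers $\Z_n(X,K)\to P$. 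Combining these two facts forces $Q^n:=Q^{\star n}\circ\Z_n(Y,K)^{-1}\to\delta_P$ in $\fP^\al(\fP^\al(\Theta))$.

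\textbf{Setup and coupled existence.} From $P\in\cC^0(x^0)$ and Definition~\ref{def: C MK}, there is a standard extension carrying a cylindrical Brownian motion $W$ such that the McKean--Vlasov equation holds with measure $P^X_s$ and relaxed control $\v(\cdot,M)$. Let $(\Sigma,\mathcal{A},(\mathcal{A}_t),P^\star)$ be the $n$-fold product of this extension, carrying iid copies $(X^k,M^k,W^k)$. Define $\q^k_t:=\v(t,M^k)$, a predictable $F$-kernel on $[0,T]\times\Sigma$. Proposition~\ref{prop: existence} then produces a probability measure $Q^{\star n}$ on $\Sigma\times\Omega^n$ under which the $W^k$ remain independent cylindrical Brownian motions and, for every $k$ and $v\in\V^*$,
\[
{_\V}\langle Y^k_t,v\rangle_{\V^*}={_\V}\langle x^n,v\rangle_{\V^*}+\int_0^t\blv\Big\langle\int b(f,s,Y^k_s,\x_n(Y_s))\q^k_s(df),v\Big\rangle_{\V^*}ds+\Big\langle\int_0^t\us(\q^k_s,s,Y^k_s,\x_n(Y_s))\,dW^k_s,v\Big\rangle_\H
\]
holds a.s., together with $Q^{\star n}\circ\Z_n(Y,K)^{-1}\in\J(x^n)$, where $K=(\delta_{\q^1_t}dt,\dots,\delta_{\q^n_t}dt)$. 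The kernel decomposition~\eqref{eq: kernel dec} preserves the $\Sigma$-marginal $P^\star$, so the pulled-back $X^k$ continue to satisfy the McKean--Vlasov equation with measure argument $P^X_s$, driven by the \emph{same} $W^k$. Setting $Q^n:=Q^{\star n}\circ\Z_n(Y,K)^{-1}$, Definition~\ref{def: C^n} gives $Q^n\in\cR^n(x^n)$.

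\textbf{Monotonicity estimate and convergence.} Apply the variational It\^o formula in the Gelfand triple $\Y\subset\X\cong\X^*\subset\Y^*$ (admissible thanks to Condition~\ref{cond: main2}(i) and the growth bound~\eqref{eq: cond 2 (ii.1)}, cf.~\cite[Theorem~4.2.5]{liu_rockner}) to $\|Y^k_t-X^k_t\|_\X^2$. Invoking the weak monotonicity~\eqref{eq: cond 2 (ii.2)}--\eqref{eq: cond 2 (ii.3)}, Burkholder--Davis--Gundy, the matched-index coupling bound $\w^\X_2(\x_n(Y_s),\x_n(X_s))^2\le\tfrac{1}{n}\sum_{k=1}^n\|Y^k_s-X^k_s\|_\X^2$, and Gronwall's lemma yields
\[
\frac{1}{n}\sum_{k=1}^n E^{Q^{\star n}}\Bigl[\sup_{t\le T}\|Y^k_t-X^k_t\|_\X^2\Bigr]\le C\Bigl(\|x^n-x^0\|_\X^2+\int_0^T E\bigl[\w^\X_2(\x_n(X_s),P^X_s)^2\bigr]ds\Bigr).
\]
Since the $X^k$ are iid with common marginal $P^X_s$ having finite $2\eta$-th moment in $\H\subset\X$ (as $P\in\NN$), the classical Wasserstein LLN and uniform integrability from the $2\eta$-moment bound in $\J$ imply that the right-hand side tends to zero. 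A second application of the Wasserstein LLN to the iid samples $(X^k,M^k)$ gives $\Z_n(X,K)\to P$ almost surely in $\fP^\al(\Theta)$. Estimating $\w^\Theta_\al(\Z_n(Y,K),\Z_n(X,K))$ through the matched-index coupling and the displayed bound (with $\X\subset\V$ continuously and the $\Y$-integrability controlled by the $\J$-moments that dominate both systems), we infer $\Z_n(Y,K)\to P$ in probability in $\fP^\al(\Theta)$. The uniform $\al$-integrability furnished by $Q^n\in\J(x^n)$ and $\sup_n\j(x^n)<\infty$ then upgrades this to $Q^n\to\delta_P$ in $\fP^\al(\fP^\al(\Theta))$, as required.

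\textbf{Main obstacle.} The principal difficulty is the coupled existence step: pathwise uniqueness is \emph{not} available under our assumptions, yet we need $Y^k$ and $X^k$ to coexist on one probability space with \emph{shared} Brownian motions and controls, so that a pathwise comparison in $\X$ becomes meaningful. This is exactly what Proposition~\ref{prop: existence}, with its \emph{very good} kernel decomposition preserving the Brownian nature of the $W^k$ under the new measure, is designed to deliver. A secondary subtlety is that monotonicity is only available in the finer Gelfand triple $\Y\subset\X\subset\Y^*$, whereas the equation is posed in $\V^*\subset\H\subset\V$; the change of topology introduced earlier in the paper must be handled carefully to justify the variational It\^o formula in $\X$ and to translate the resulting $\X$-closeness into closeness in the full $\w^\Theta_\al$ metric (with its mixture of $\V$-supremum and $\Y$-integral contributions), where the $\J$-moments furnish the required uniform integrability.
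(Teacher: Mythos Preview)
Your overall strategy is the same as the paper's: build iid McKean--Vlasov copies $(X^k,M^k,W^k)$ on a product space, invoke Proposition~\ref{prop: existence} to couple in the interacting particles $Y^k$ with the \emph{same} noises and controls, then combine the monotonicity estimate in the Gelfand triple $\Y\subset\X\subset\Y^*$ with the Wasserstein LLN for the iid $X^k$. The setup, the It\^o/Gronwall computation in $\X$, and the identification of the main obstacle (coupled existence without pathwise uniqueness) are all correct and match the paper.

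There is, however, a genuine gap in your passage from the $\X$-estimate to convergence in $\fP^\al(\Theta)$. You write that $\w^\Theta_\al(\Z_n(Y,K),\Z_n(X,K))$ can be estimated ``through the matched-index coupling and the displayed bound (with $\X\subset\V$ continuously and the $\Y$-integrability controlled by the $\J$-moments)''. But the metric $\d$ on $\Omega$ contains the term $(\int_0^T\|\omega(s)-\alpha(s)\|_\Y^\alpha ds)^{1/\alpha}$, and your monotonicity estimate gives \emph{no} control over $\|Y^k_s-X^k_s\|_\Y$. The $\J$-moments bound $\int_0^T\N(Y^k_s)\,ds$ and $\int_0^T\N(X^k_s)\,ds$ separately, not their difference in $\Y$; uniform integrability of the individual terms does not make the difference small. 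So the matched-index coupling does not directly bound $\w^\Theta_\al(\Z_n(Y,K),\Z_n(X,K))$.

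The paper closes this gap by a change-of-topology argument that you allude to but do not execute. First one uses Lemma~\ref{lem: Rk rel comp} to get relative compactness of $R^n:=Q^n$ in $\fP^\al(\fP^\al(\Theta))$; it then suffices to show that every accumulation point equals $\delta_P$. For this the paper passes to the auxiliary space $\Upsilon:=\{\omega\in C([0,T];\X):\int_0^T\|\omega(s)\|_\Y^\alpha ds<\infty\}\times\m$ and observes that the Borel $\sigma$-fields generated by $\d+\mathsf{r}$ and by $\d^*+\mathsf{r}$ (with $\d^*(\omega,\alpha)=\sup_s\|\omega(s)-\alpha(s)\|_\X$) coincide, and that limits in either topology agree because both refine the $\V$-sup topology. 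Hence it is enough to prove $R^n\to\delta_P$ in $\fP(\fP(\Upsilon))$ for the $\d^*$-topology, and \emph{there} your $\X$-estimate (plus the Wasserstein LLN) is exactly what is needed. You should replace the direct matched-index bound on $\w^\Theta_\al$ by this compactness-plus-identification argument.
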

With this lemma at hand, we can prove Theorem \ref{theo: main1} (iv). 
Define 
\[
L := \Big\{ Q \in \fP^\al (\fP^\al (\Theta)) \colon \exists \, Q^n \in \cR^n (x^n) \text{ such that } Q = \lim_{n \to \infty} Q^n \text{ in } \fP^\al (\fP^\al (\Theta)) \Big\}. 
\] 
We show that \(\cR^0 (x^0) \subset L\), which implies the claim of Theorem \ref{theo: main1} (iv). By Corollary \ref{coro: conv}, each \(\cR^n (x^n)\) is convex. Thus, also \(L\) is convex. Furthermore, by \cite[Proposition~7.1.20]{hu_papa}, \(L\) is closed in \(\fP^\al (\fP^\al (\Theta))\). 
Recall from Theorem~\ref{theo: main1} (i) that the set \(\cR^0 (x^0)\) is nonempty and compact in \(\fP^\al (\fP^\al (\Theta))\). 
Furthermore, \(\cR^0 (x^0)\) is convex and we denote its extreme points by \(\on{ex}\, (\cR^0 (x^0))\).
We notice that the topologies induced by either \(\fP^\al (\fP^\al (\Theta))\) and \(\fP(\fP^\al (\Theta))\) coincide on \(\cR^0 (x^0)\) by the definition of \(\J (x^0)\), and furthermore that \(\cR^0 (x^0)\) is compact in both of these topologies. 
Thanks to the Krein--Milman theorem (\cite[Theorem~7.68]{charalambos2013infinite}), we have
\[
\cR^0 (x^0) = \overline{\on{co}}\, \big[\on{ex}\, (\cR^0 (x^0))\big],
\]
where \(\overline{\on{co}} \,[\, \cdot\,]\) denotes the closure (in \(\fP^\al (\fP^\al (\Theta))\)) of the convex hull. 
Hence, as \(L\) is convex and closed, to conclude \(\cR^0 (x^0) \subset L\), it suffices to show that \(\on{ex}\, (\cR^0 (x^0)) \subset L\).
The set \(\mathcal{C}^0 (x^0)\) with the weak topology is a Lusin space and consequently, so is \(\mathcal{P} (\mathcal{C}^0 (x^0))\), see Lemma~\ref{appendix: Suslin prob meas} in Appendix~\ref{sec: appendix}. 
Thus, by Lemma~\ref{appendix: extreme points} in Appendix~\ref{sec: appendix}, it follows that
\[
\on{ex}\, ( \cR^0 (x^0)) \subset \Big\{  \mu \in \fP (\cC^0 (x^0)) \colon \mu = \lambda \, \delta_{\mu^1} + (1 - \lambda) \, \delta_{\mu^2}, \ \lambda \in [0, 1], \ \mu^1, \mu^2 \in \cC^0 (x^0) \Big\}.
\]
Using this observation, as each \(\cR^n (x^n)\) is convex, Lemma~\ref{lem: approx point mass} implies that \(\on{ex}\, ( \cR^0 (x^0)) \subset L\). This completes the proof of Theorem \ref{theo: main1} (iv).
 \qed
\vspace{0.25cm}

It is left to prove Lemma \ref{lem: approx point mass}. The main strategy is to adapt the classical coupling argument for propagation of chaos in finite dimensions (\cite{LakLN,SM}) to our infinite-dimensional controlled setting. Before we present the technical proof, let us sketch the main idea for a class of finite dimensional equations.
\begin{SoI}
Suppose that \(X\) is a \(d\)-dimensional solution process to the stochastic equation 
\[
d X_t = \ell (X_t, \alpha_t, P^X_t) \, d W_t, \quad X_0 = x_0,
\] 
where \(\alpha\) is a control process and \(W\) is a Brownian motion, and \(\ell\) is a bounded Lipschitz continuous function such that 
\[
\| \ell (x, \alpha, \mu) - \ell (y, \alpha, \nu) \|^2 \leq C \Big( \|x - y\|^2 + \mathsf{w}_2 (\mu, \nu)^2 \Big), 
\] 
where \(\mathsf{w}_2\) is the 2-Wasserstein distance on \(\fP^2 (\bR^d)\).
To approximate \(X\) by a particle system, let \((X^k, \alpha^k, W^k)_{k = 1}^N\) be independent, identically distributed copies of \((X, \alpha, W)\) such that  
\[
d X^k_t = \ell (X^k_t, \alpha^k_t, P^X_t) \, d W^k_t, \quad X^k_0 = x_0.
\] 
Now, let \(Y^1, \dots, Y^N\) be solutions to
\begin{align} \label{eq: approx sketch} 
d Y^k_t = \ell (Y^k_t, \alpha^k_t, \x_N (Y^1_t, \dots, Y^N_t)) \, d W^k_t, \quad Y^k_0 = x_0, 
\end{align} 
whose existence is guaranteed by the Lipschitz hypothesis on \(\ell\). 
Using that \(X^k\) and \(Y^k\) are driven by the same Brownian motion and associated to the same control process, we obtain from It\^o's formula that 
\begin{align} \label{eq: estimate from ito} 
E \Big[ \sup_{s \in [0, t]} \| X^k_s - Y^k_s \|^2 \Big] \leq C \int^t_0 E \Big[ \|X^k_s - Y^k_s\|^2 + \mathsf{w}_2 (P^X_s, \x_N (Y^1_s, \dots, Y^N_s))^2 \Big] \, ds. 
\end{align} 
Thus, by Gronwall's lemma, 
\[
E \Big[ \sup_{s \in [0, t]} \| X^k_s - Y^k_s \|^2 \Big] \leq C \int^t_0 E \Big[ \mathsf{w}_2 (P^X_s, \x_N (Y^1_s, \dots, Y^N_s))^2 \Big] \, ds.
\] 
Finally, 
\begin{align*}
E \Big[ \sup_{s \in [0,t]} \mathsf{w}_2 (P^X_s, \x_N (Y^1_s, \dots, Y^N_s))^2\Big] &\leq C \, \Big( E \Big[ \sup_{s \in [0,t]} \mathsf{w}_2 (P^X_s, \x_N (X^1_s, \dots, X^N_s))^2\Big] \\&\quad + E \Big[ \sup_{s \in [0,t]} \mathsf{w}_2 (\x_N (X^1_s, \dots, X^N_s), \x_N (Y^1_s, \dots, Y^N_s))^2\Big] \Big) 
\\&\leq C \, \Big( E \Big[ \sup_{s \in [0,t]} \mathsf{w}_2 (P^X_s, \x_N (X^1_s, \dots, X^N_s))^2\Big] \\&\quad +  \int^t_0 E \Big[ \mathsf{w}_2 (P^X_s, \x_N (Y^1_s, \dots, Y^N_s))^2 \Big] \, ds \Big), 
\end{align*} 
which, again by Gronwall's lemma, yields 
\[
E \Big[ \sup_{s \in [0,t]} \mathsf{w}_2 (P^X_s, \x_N (Y^1_s, \dots, Y^N_s))^2\Big]  \leq C E \Big[ \sup_{s \in [0,t]} \mathsf{w}_2 (P^X_s, \x_N (X^1_s, \dots, X^N_s))^2\Big].
\] 
As the right hand side converges to zero by a version of the law of large numbers, it follows that \(\x_N (Y^1_s, \dots, Y^N_s)\) approximates the law of \(X_s\). 

Our proof of Lemma~\ref{lem: approx point mass} adapts the same general strategy, but several steps are technically more delicate in the infinite dimensional setting. First, unlike the finite dimensional case, we cannot rely on a strong existence theorem for the interacting particle system \eqref{eq: approx sketch}; instead, we use the weak existence result of Proposition~\ref{prop: existence}. Second, since \(\Omega\) carries several relevant norms, we must carefully track the convergence mode to ensure that the coupling argument yields the desired limit. To do so, we combine the coupling method with a relative compactness argument and use it only to identify the limit. Finally, to obtain an estimate analogous to \eqref{eq: estimate from ito} in the appropriate norm, we cannot use the standard Itô formula; instead, we rely on the Krylov–Rozovskii It\^o formula (Lemma~\ref{appendix: Krylov}), which is adapted to the variational framework.
\end{SoI} 
	
\begin{proof}[Proof of Lemma \ref{lem: approx point mass}]
	Let \((x^n)_{n = 0}^\infty \subset \H\) be a sequence such that \(x^n \to x^0\) in \(\|\cdot\|_\H\) and take \(R \in \cC^0 (x^0)\). 
	By definition, there exists a standard extension \((\Sigma, \mathcal{A}, (\mathcal{A}_t)_{t \in [0, T]}, P)\) of \((\Theta, \mathcal{O}, \mathbf{O}, R)\), supporting a standard cylindrical Brownian motion \(W\), such that 
	\(P\)-a.s., for all \(v \in \V^*\),
	\begin{align*}
			{_\V}\langle X, v \rangle_{\V^*} = {_\V}\langle x, v\rangle_{\V^*}&+ \int_0^\cdot \blv \Big\langle  \int b (f, s, \oX_s, R_s) \, \v (s, M, df), v \Big \rangle_{\V^*} \, ds
			\\&+ \Big\langle \int_0^\cdot \us (\v (s, M), s, \oX_s, R_s) \, d W_s, v \Big \rangle_\H, 
		\end{align*}
	where \(R_s = R \circ X^{-1}_s = P \circ X^{-1}_s\).
	For \(n \in \mathbb{N}\), consider the \(n\)-fold product \((\Sigma^n, \mathcal{A}^n, (\mathcal{A}^n_t)_{t \in [0, T]}, P^n)\), where
	\[
	\mathcal{A}^n_t := \bigcap_{s > t} \bigotimes_{k = 1}^n \mathcal{A}_s, \quad P^n := \bigotimes_{k = 1}^n P.
	\]
	With obvious notation, we have \(P^n\)-a.s., for \(k = 1, \dots, n\) and \(v \in \V^*\),
	\begin{align*}
	{_\V}\langle X^k, v \rangle_{\V^*} = {_\V}\langle x, v\rangle_{\V^*}&+ \int_0^\cdot \blv \Big\langle  \int b (f, s, \oX_s^k, R_s) \, \v (s, M^k, df), v \Big \rangle_{\V^*} \, ds
	\\&+ \Big\langle \int_0^\cdot \us (\v (s, M^k), s, \oX^k_s, R_s) \, d W^k_s, v \Big \rangle_\H, 
\end{align*}
where, by construction, \(W^1, \dots, W^n\) are independent cylindrical Brownian motions. 
	Further, we set 
	\[
	\Psi := \Sigma^n \times \Omega^n, \quad \mathcal{G}_t := \mathcal{A}^n \otimes \cF^n, \quad \mathcal{G}_t := \bigcap_{s > t} \, (\mathcal{A}^n_s \otimes \cF^n_s),
	\]
	and we denote the coordinate process on the second coordinate by \((Y^1, \dots, Y^n)\). By Proposition~\ref{prop: existence}, there exists a probability measure \(Q^n\) on \((\Psi, \mathcal{A})\) such that \(Q^n \circ \Z_n (Y, M)^{-1} \in \J (x^n)\), where \(M := (M^1, \dots, M^n)\), and \(Q^n\)-a.s., for all \(k = 1, \dots, n\) and \(v \in \V^*\),
	\begin{align*}
		    {_\V}\langle Y^k, v \rangle_{\V^*} =  {_\V}\langle x^n, v \rangle_{\V^*} &+ \int_0^\cdot \blv \Big \langle \int b (f, s, Y^k_s, \x_n (Y_s)) \, \v (s, M^k, df), v \Big \rangle_{\V^*} \, ds
		\\&+ \Big \langle \int_0^\cdot \us (\v (s, M^k), s, Y^k_s, \x_n (Y_s)) \, d W^k_s, v \Big\rangle_\H.
	\end{align*}
Furthermore, \(Q^n\) has a decomposition of the type \eqref{eq: kernel dec}, which entails that the dynamics of \(X^1, \dots, X^n\) are still valid under \(Q^n\), see Remark~\ref{appendix: rem factorization} in Appendix~\ref{sec: appendix}. 

We notice that \(R^n := Q^n \circ \Z_n (Y, M)^{-1} \in \cR^n (x^n)\). In the following, we prove that \(R^n \to \delta_R\) in \(\fP^\al (\fP^\al (\Theta))\), which finishes the proof. We proceed with several steps. 

\smallskip
\noindent
{\em Step 0.} Let us start with notations and preparations.
We introduce the set
\[
\Upsilon := \Big\{ \omega \in C ([0, T]; \X) \colon \int_0^T \| \omega (s) \|^\alpha_\Y \, ds < \infty \Big\} \times \m \subset \Theta,
\]
and recall
\[
\d (\omega, \alpha) = \sup_{s \in [0, T]} \|\omega(s) - \alpha (s)\|_{\V} + \Big( \int_0^T \| \omega (s) - \alpha (s)\|^\alpha_\Y \, ds \Big)^{1/ \alpha}.
\]
Let us also consider the metrics
\begin{align*}
		\d^\circ (\omega, \alpha)&:= \sup_{s \in [0, T]} \|\omega(s) - \alpha (s)\|_{\X} + \Big( \int_0^T \| \omega (s) - \alpha (s)\|^\alpha_\Y \, ds \Big)^{1/ \alpha}, \\
	\d' (\omega, \alpha) &:= \sup_{s \in [0, T]} \| \omega (s) - \alpha (s)\|_\V,   \\
	\d^* (\omega, \alpha) &:= \sup_{s \in [0, T]} \| \omega (s) - \alpha (s)\|_{\X}.
\end{align*}
Using that \(\X \subset \V\), it follows that \(\d, \d', \d^* \leq c\, \d^\circ, \d' \leq \d\) and \(\d' \leq c\,\d^*\) for some constant \(c > 0\). Thus, on the set \(\Upsilon\), the topologies induced by \(\d + \r\) or \(\d^* + \r\) are both stronger than those induced by \(\d' + \r\), and the topology induced by \(\d^\circ + \r\) is stronger than those induced by either \(\d + \r, \d' + \r\) or \(\d^* + \r\). 
As \(\Upsilon\) is Polish with the topology from \(\d^\circ  + \r\), it is Lusin with each of the topology induced by \(\d + \r, \d' + \r\) or \(\d^* + \r\). The Borel \(\sigma\)-fields coincide for all these topologies by Lemma~\ref{appendix: Borel sigma fields} in Appendix~\ref{sec: appendix}, the set \(\mathcal{P} (\Upsilon)\) is the same for all of these topologies and \(\mathcal{P} (\Upsilon) \in \mathcal{B} (\mathcal{P} (\Theta))\) by Lemmata~\ref{appendix: lusin Borel} and~\ref{appendix: Suslin prob meas} in Appendix~\ref{sec: appendix}.

\smallskip 
\noindent
{\em Step 1.} Notice that \((R^n)_{n = 1}^\infty\) is relatively compact in \(\fP^\al (\fP^\al (\Theta))\) by Lemma~\ref{lem: Rk rel comp}. Therefore, \(R^n \to \delta_R\) in \(\fP^\al (\fP^\al (\Theta))\) follows once we prove that all accumulation points of \((R^n)_{n = 1}^\infty\) coincide with \(\delta_R\).
Let us explain our argument for this in detail.
First, we show that \(\delta_R, R^n\in \fP (\fP (\Upsilon))\).
Consequently, by virtue of Lemma~\ref{appendix: subspace weak conv} in Appendix~\ref{sec: appendix}, it suffices to prove that all accumulation points of \((R^n)_{n = 1}^\infty\) in \(\fP (\fP (\Upsilon))\), where \(\Upsilon\) is endowed with the topology induced by \(\d + \r\), are given by~\(\delta_R\).
Take a sequence \((\mu^n)_{n = 1}^\infty \subset \fP (\fP (\Upsilon))\) such that \(\mu^n \to \mu\) in \(\fP (\fP (\Upsilon))\) when \(\Upsilon\) is endowed with the topology induced by \(\d + \r\) and \(\mu^n \to \mu^*\) when \(\Upsilon\) is endowed with the topology induced by \(\d^* + \r\). Then, we also have \(\mu^n \to \mu\) and \(\mu^n \to \mu^*\) in \(\fP (\fP (\Upsilon))\) when \(\Upsilon\) is endowed with the topology generated by \(\d' + \r\). By the uniqueness of the limit, we get that~\(\mu = \mu^*\). 

In summary, to complete our proof, it suffices to prove that \(R^n \to \delta_R\) in \(\fP (\fP (\Upsilon))\) when \(\Upsilon\) is endowed with the topology induced by \(\d^* + \r\), and that these measures are really supported on \(\fP (\fP (\Upsilon))\). This is program of our second (and final) step.

\smallskip
\noindent
{\em Step 2.}
In the following, we tailor a coupling idea from \cite{LakLN,SM} to our setting.
Notice that \(Q^n\)-a.s. \(X^k, Y^k \in L^\alpha ([0, T]; \Y)\) and that \(Q^n\)-a.s.
\begin{align*}
	&\int_0^T \int \| b (f, s, Y^k_s, \x_n (Y_s)) \|^{\alpha / (\alpha - 1)}_{\Y^*} \, \v (s, M^k, df) \, ds < \infty, \\ 
	&\int_0^T \int \| b (f, s, X^k_s, R_s) \|^{\alpha / (\alpha - 1)}_{\Y^*} \, \v (s, M^k, df) \, ds < \infty,
\end{align*}
by Condition~\ref{cond: main2}~(ii) and the integrability properties of \(X^k\) and \(Y^k\).
Since \(\H \subset \X\) continuously, we also have~\(Q^n\)-a.s.
\begin{align*}
	&\int_0^T\| \us (\v (s, M^k), s, Y^k_s, \x_n (Y_s)) \|^{2}_{L_2 (\U; \X)}  \, ds < \infty, \\ 
	&\int_0^T  \| \us (\v (s, M^k), s, X^k_s, R_s) \|^{2}_{L_2(\U; \X)} \, ds < \infty,
\end{align*}
which shows that the It\^o integrals in the dynamics of \(X^k\) and \(Y^k\) are \(\X\)-valued continuous (local) martingales. From these observations and recalling the Gelfand triple \(\Y\subset \X \,\cong \,\X^* \subset \Y^*\), we conclude that \(Q^n\)-a.s. \(X^k\) and \(Y^k\) have \(\Y^*\)-valued continuous paths.
Now, we deduce from the Krylov--Rozovskii It\^o formula (see Lemma~\ref{appendix: Krylov} in Appendix~\ref{sec: appendix}) that \((X^k, M^k)\) and \((Y^k, M^k)\) have \(Q^n\)-a.s. paths in \(\Upsilon\) and, using also Condition~\ref{cond: main2}~(ii), \(Q^n\)-a.s. 
\begin{equation*} \begin{split}
	\|Y^{k}_t &- X^k_t\|^2_{\X} - \|x^n - x^0\|_{\X}^2
	\\ &\ = \int_0^t 2 \langle Y^{k}_s - X^k_s, (\us (\v (s, M^k), s, Y^{k}_s, \x_n (Y_s)) - \us (\v (s, M^k), s, X^k_s, R_s) ) \, d W^k_s \rangle_{\X}
	\\&\ \qquad+ \int_0^t 2 \mathop{\vphantom{\int}}\nolimits_{\Y}\hspace{-0.1cm} \Big \langle Y^{k}_s - X^k_s, \int \big( b (f, s, Y^{k}_s, \x_n (Y_s)) - b (f, s, X^k_s, R_s) \big) \, \v (s, M^k, df) \Big \rangle_{\Y^*} \, ds
	\\&\ \qquad  + \int_0^t \| \us (\v (s, M^k),s, Y^{k}_s, \x_n (Y_s)) - \us (\v (s, M^k),s, X^k_s, R_s)\|^2_{L_2 (\U; \X)} \, ds 
	\\&\ \leq  \int_0^t 2 \langle Y^{k}_s - X^k_s, (\us (\v (s, M^k), s, Y^{k}_s, \x_n (Y_s)) - \us (\v (s, M^k), s, X^k_s, R_s) ) \, d W^k_s \rangle_{\X}
	\\&\ \qquad + \int_0^t C \Big( \|Y^{k}_s - X^k_s\|^2_{\X} + \w^{\X}_2 (\x_n (Y_s), R_s)^2 \Big) \,  ds.
\end{split} \end{equation*}
Using Burkholder's inequality, again Condition~\ref{cond: main2} and Young's inequality for products, we obtain that 
\begin{align*}
	E^{Q^n} \Big[ \sup_{s \in [0, t]} & \Big| \int_0^s 2 \langle Y^{k}_u - X^k_u, (\us (\v (u, M^k), u, Y^{k}_u, \x_n (Y_u)) - \us (\v (u, M^k), u, X^k_u, R_u) ) \, d W^k_u \rangle_{\X} \Big| \, \Big]
	\\&\leq CE^{Q^n} \Big[ \Big( \int_0^t \|Y^{k}_u - X^k_u\|_{\X}^2 \,\big[ \|Y^{k}_u - X^k_u\|^2_{\X} + \w^{\X}_2 (\x_n (Y_u), R_u)^2\, \big] \, du \Big)^{1/2}\, \Big] 
	\\&\leq CE^{Q^n} \Big[ \sup_{s \in [0, t]} \|Y^{k}_s - X^k_s\|_{\X}  \Big( \int_0^t \big[ \|Y^{k}_u - X^k_u\|^2_{\X} + \w^{\X}_2 (\x_n (Y_u), R_u)^2\, \big] \, du \Big)^{1/2}\, \Big] 
	\\&\leq E^{Q^n} \Big[\, \frac{1}{2} \sup_{s \in [0, t]} \|Y^{k}_s - X^k_s\|_{\X}^2 + \frac{C}{2}\, \int_0^t \big[ \|Y^{k}_u - X^k_u\|^2_{\X} + \w^{\X}_2 (\x_n (Y_u), R_u)^2\, \big] \, du \, \Big].
\end{align*} 
In summary, we conclude that 
\begin{align*}
	E^{Q^n} \Big[ \sup_{s \in [0, t]} \|Y^{k}_s - X^k_s\|^2_{\X} \Big] \leq 2 \|x^n - x^0\|^2_{\X} + C E^{Q^n} \Big[ \int_0^t \big[ \|Y^{k}_u - X^k_u\|^2_{\X} + \w^{\X}_2 (\x_n (Y_u), R_u)^2 \big] \, du \, \Big].
\end{align*} 
Gronwall's lemma yields that 
\begin{align} \label{eq: Gron 1}
	E^{Q^n} \Big[ \sup_{s \in [0, t]} \|Y^{k}_s - X^k_s\|^2_{\X} \Big] \leq C \Big( \|x^n - x^0\|_{\X}^2 + \int_0^t E^{Q^n} \Big[\w_2^{\X} (\x_n (Y_s), R_s)^2 \Big] ds \Big).
\end{align}
For \(\mu, \nu \in \fP (\Omega)\), define 
\[
\w_t (\mu, \nu)^2:= \inf_{\pi \in \Pi (\mu, \nu)} \iint \sup_{s \in [0, t]} \| \omega (s) - \alpha (s)\|^2_{\X}\, \pi (d\omega, d \alpha), 
\]
where \(\Pi (\mu, \nu)\) denotes the set of all couplings of \(\mu\) and \(\nu\). 
Using the coupling \(\frac{1}{n} \sum_{k = 1}^n \delta_{(Y^{k}, X^k)}\), we observe that 
\begin{align} \label{eq: coupl 1}
	\w_t(\x_n (Y), \x_n (X))^2 \leq \frac{1}{n} \sum_{k= 1}^n \sup_{s \in [0, t]} \|Y^{k}_s - X^k_s\|^2_{\X}.
\end{align}
Hence, using the triangle inequality, \eqref{eq: Gron 1} and \eqref{eq: coupl 1}, it follows that
\begin{align*}
	E^{Q^n} \Big[ & \w_t  (\x_n (Y), R \circ X^{-1})^2 \Big] 
	\\&\leq 2 E^{Q^n} \Big[ \w_t (\x_n (Y), \x_n (X))^2 \Big] + 2 E^{Q^n} \Big[ \w_t (\x_n (X), R \circ X^{-1})^2 \Big]
	\\&\leq  2 E^{Q^n} \Big[ \frac{1}{n} \sum_{k = 1}^n \sup_{s \in [0, t]} \|Y^{k}_s - X^k_s\|^2_\X \Big] + 2 E^{Q^n} \Big[ \w_t (\x_n (X), R \circ X^{-1})^2 \Big]
	\\&\leq C \Big( \|x^n - x^0\|_{\X}^2 + \int_0^t E^{Q^n} \Big[\w_2^{\X} (\x_n (Y_s), R_s)^2 \Big] \, ds + E^{Q^n} \Big[ \w_t (\x_n (X), R \circ X^{-1})^2 \Big] \Big)
	\\&\leq C \Big( \|x^n - x^0\|_{\X}^2 + \int_0^t  E^{Q^n} \Big[\w_s (\x_n (Y), R \circ X^{-1})^2 \Big] \, ds + E^{Q^n} \Big[ \w_t (\x_n (X), R \circ X^{-1})^2 \Big] \Big).
\end{align*}
Using Gronwall's lemma once again, we get that 
\begin{align} \label{eq: main1}
	E^{Q^n} \Big[ \w_T (\x_n (Y), R \circ X^{-1})^2 \Big] \leq C \Big( \|x^n - x^0\|_{\X}^2+ E^{Q^n} \Big[ \w_T (\x_n (X), R \circ X^{-1})^2 \Big] \Big).
\end{align}
Under \(Q^n\), the processes \(X^1, X^2, \dots, X^n\) are independent with law \(R \circ X^{-1}\). Hence, as also
\[
E^R \Big[ \sup_{s \in [0, T]} \|X_s\|^2_{\X} \Big] \leq C E^R \Big[ \sup_{s \in [0, T]} \|X_s\|^2_{\H} \Big] < \infty, 
\]
by the definition of \(\NN\), it follows from Lemma~\ref{appendix: law large numbers} in Appendix~\ref{sec: appendix} that 
\begin{align} \label{eq: iid conv}
E^{Q^n} \Big[ \w_T (\x_n (X), R \circ X^{-1})^2 \Big] \to 0 \text{ as } n \to \infty.
\end{align}
In summary, we conclude from \eqref{eq: Gron 1}, \eqref{eq: main1} and \eqref{eq: iid conv} that 
\begin{align} \label{eq: final conv}
\frac{1}{n} \sum_{k = 1}^n E^{Q_n} \Big[ \sup_{s \in [0, T]} \|Y^k_s - X^k_s\|^2_\X \Big] \to 0 \text{ as } n \to \infty.
\end{align}
Endow \(\Upsilon\) with the metric \(\d^* + \r\). Using the triangle inequality, the coupling \(\frac{1}{n} \sum_{k = 1}^n\) \(\delta_{(Y^k, M^k), (X^k, M^k)}\), \eqref{eq: final conv} and again Lemma~\ref{appendix: law large numbers} in Appendix~\ref{sec: appendix}, 
we get that 
\begin{align*}
	E^{Q_n} \Big[ & \w_2^{\Upsilon} (\Z_n (Y, M), R)^2 \Big] 
	\\&\leq 2\, \Big( E^{Q_n} \Big[ \w_2^{\Upsilon} (\Z_n (Y, M), \Z_n (X, M))^2 \Big] + E^{Q_n} \Big[ \w_2^{\Upsilon} (\Z_n (X, M), R)^2 \Big] \Big)
	\\&\leq 2\, \Big( \frac{1}{n} \sum_{k = 1}^n E^{Q_n} \Big[ \sup_{s \in [0, T]} \|Y^k_s - X^k_s\|^2_\X \Big] + E^{Q_n} \Big[ \w_2^{\Upsilon} (\Z_n (X, M), R)^2 \Big] \Big)
	\to 0 \text{ as } n \to \infty.
\end{align*}
As Wasserstein convergence implies weak convergence for separable metrizable underlying spaces (see \cite[Theorem~11.8.2]{dudley}), it follows that \(R^n \to \delta_R\) in \(\fP (\fP (\Upsilon))\). This completes the proof.
\end{proof}
		\subsection{Proof of Theorem \ref{theo: main1} (v)}
		We use the notation from Theorem \ref{theo: main1} (v). 
Take an arbitrary measure \(Q^0 \in \cR^0 (x^0)\). Then, by Theorem \ref{theo: main1} (iv), there exists a sequence \(Q^{n} \in \cR^{n} (x^{n})\) such that \(Q^{n} \to Q^0\) in \(\fP^\al (\fP^\al (\Theta))\). By the assumptions on \(\psi\), and Lemma~\ref{lem: gen Fatou W} in Appendix~\ref{sec: appendix}, 
\begin{align*}
	E^{Q^0} \big[ \psi \big] \leq \liminf_{n \to \infty} E^{Q^{n}} \big[ \psi \big] 
	\leq \liminf_{n \to \infty} \sup_{Q\in \cR^{n} (x^{n})} E^{Q} \big[ \psi \big].
\end{align*}
As \(Q^0\) was arbitrary, we get that
\[
\sup_{Q \in \cR^0 (x^0)} E^Q \big[ \psi \big] \leq \liminf_{n \to \infty} \sup_{Q \in \cR^n (x^n)} E^Q \big[ \psi \big].
\]
The proof is complete.
\qed
	
		\subsection{Proof of Theorem \ref{theo: main1} (vi)}
		By Theorem \ref{theo: main1} (iii) and (v), for every sequence \((x^n)_{n = 0}^\infty \subset \H\) with \(x^n \to x^0\) in \(\|\cdot\|_\H\), we get that
		\[
		\sup_{Q \in \cR^n (x^n)} E^Q \big[ \psi \big] \to \sup_{Q \in \cR^0 (x^0)} E^Q \big[ \psi \big], \quad n \to \infty.
		\]
		Now, by Lemma~\ref{appendix: remmert} in Appendix~\ref{sec: appendix}, the map \(x \mapsto \sup_{Q \in \cR^0 (x)} E^Q [ \psi ]\) is continuous and \eqref{eq: compact convergence} holds.
		The proof is complete. \qed 
		\subsection{Proof of Theorem \ref{theo: main1} (vii)}
	To keep our notation simple, we write \(\w := \w_{\al}^{\fP^\al (\Theta)}\). 
		By Lemma~\ref{appendix: remmert} in Appendix~\ref{sec: appendix}, it suffices to prove that 
		\begin{align*}
		\mathsf{h} (\cR^n (x^n), \cR^0 (x^0)) =	\max \Big\{ \max_{Q \in \cR^n (x^n)} \w (Q, \cR^0 (x^0)) , \max_{Q \in \cR^0 (x^0)} \w (Q, \cR^n (x^n))\Big\} \to 0
		\end{align*}
	for all sequences \((x^n)_{n = 0}^\infty \subset \H\) with \(x^n \to x^0\) in \(\|\cdot \|_\H\).
	Notice that the maxima are attained by the compactness of the sets \(\cR^n (x^n)\) and \(\cR^0 (x^0)\) in the space \(\mathcal{P}^\varrho (\mathcal{P}^\varrho (\Theta))\).
	
	We start investigating the first term. 	
	By the compactness of \(\cR^n (x^n)\), for every \(n  \in \mathbb{N}\), there exists a measure \(Q^n \in \cR^n (x^n)\) such that 
	\[
	\max_{Q \in \cR^n (x^n)} \w (Q, \cR^0 (x^0)) = \w (Q^n, \cR^0 (x^0)).
	\]
	By Theorem~\ref{theo: main1}~(ii), every subsequence of \(1, 2, \dots\) has a further subsequence \((N_n)_{n = 1}^\infty\) such that \((Q^{N_n})_{n = 1}^\infty\) converges in \(\fP^\al(\fP^\al (\Theta))\) to a measure \(Q^0 \in \cR^0 (x^0)\). By the continuity of the distance function, we have 
	\[
	\w (Q^{N_n}, \cR^0 (x^0)) \to \w (Q^0, \cR^0 (x^0)) = 0.
	\]
	We conclude that
	\[
		\max_{Q \in \cR^n (x^n)} \w(Q, \cR^0 (x^0)) = \w (Q^n, \cR^0 (x^0)) \to 0 \text{ as } n \to \infty.
	\]
	
	We turn to the second term. By the compactness of \(\cR^0 (x^0)\), for every \(n \in \mathbb{N}\), there exists a measure \(R^n \in \cR^0 (x^0)\) such that 
	\[
	\max_{Q \in \cR^0 (x^0)} \w (Q, \cR^n (x^n)) = \w (R^n, \cR^n (x^n)).
	\]
	Let \((N^n_1)_{n = 1}^\infty\) be an arbitrary subsequence of \(1, 2, \dots\). Again by compactness of \(\cR^0 (x^0)\), there exists a subsequence \((N^n_2)_{n = 1}^\infty \subset (N^n_1)_{n = 1}^\infty\) such that \((R^{N^n_2})_{n = 1}^\infty\) converges in \(\fP^\al (\fP^\al (\Theta))\) to a measure \(R^0 \in \cR^0 (x^0)\). 
	By Theorem~\ref{theo: main1}~(iv), there exists a sequence \((Q^{N^n_2})_{n = 1}^\infty\) such that \(Q^{N^n_2} \in \cR^{N^n_2}(x^{N^n_2})\) and \(Q^{N^n_2} \to R^0\) in \(\fP^\al (\fP^\al (\Theta))\). Finally, 
	\[
	 \w (R^{N^n_2}, \cR^{N^n_2} (x^{N^n_2})) \leq \w (R^{N^n_2}, Q^{N^n_2}) \leq  \w (R^{N^n_2}, R^0) + \w(R^0, Q^{N^n_2}) \to 0.
	\]
	As \((N^n_1)_{n = 1}^\infty\) was arbitrary, this proves that 
	\[
	 \w (R^n, \cR^n (x^n)) \to 0.
	\]
	In summary, \(\cR^n (x^n) \to \cR^0 (x^0)\) in the Hausdorff metric topology. The proof is complete. \qed

\appendix

\section{A collection of auxiliary results} \label{sec: appendix}
To keep this paper as self-contained as possible and to improve its readability, this appendix collects a variety of auxiliary results from the literature that are used in our proofs. 

\subsection{Lusin spaces}
A Hausdorff space \(\Lambda\) is called Lusin if there exists a Polish space \(\Sigma\) and a continuous bijection from \(\Sigma\) to \(\Lambda\); equivalently, there exists a stronger topology on \(\Lambda\) that turns it into a Polish space. For the following facts, see \cite[p. 96, and Theorem~2 on p.~95]{schwarz}.
\begin{lemma} 
	Every Lusin space is separable and every Borel subset of a Lusin space is again a Lusin space.
\end{lemma}   
The next result follows from \cite[Corollary~2, p.~101]{schwarz}. 
\begin{lemma} \label{appendix: Borel sigma fields}
	The Borel \(\sigma\)-fields on a set coincide for all comparable Lusin topologies. 
\end{lemma}

The following useful fact is a restatement of \cite[Theorem~5, p. 101]{schwarz}. 

\begin{lemma} \label{appendix: lusin Borel}
	Let \(\Lambda\) be a Hausdorff space. Every Lusin subspace (with the induced topology) is Borel in \(\Lambda\). 
\end{lemma} 

The next lemma follows from \cite[Theorem~7, p.~385]{schwarz}. 
\begin{lemma} \label{appendix: Suslin prob meas} 
	For a Lusin (resp., Polish) space \(\Lambda\), the space \(\mathcal{P}(\Lambda)\) with the weak topology is also Lusin (resp., Polish).
\end{lemma}

\subsection{Measure theory}

The following is a restatement of \cite[Theorem~A.3.12]{DE_97}.
\begin{lemma} \label{appendix: lower semi} 
Let \(\Lambda\) be a Polish space and let \(g \colon \Lambda \to [0, \infty]\) be lower semicontinuous. Then, the map \(\mu \mapsto \int g\,  d \mu\) from \(\mathcal{P} (\Lambda)\) into \([0, \infty]\) is also lower semicontinuous. 
\end{lemma} 

The following is a partial restatement of \cite[Theorem~8.10.61]{bogachev}.

\begin{lemma} \label{appendix: bogachev}
	Let \(\Lambda\) and \(\Gamma\) be Polish spaces and let \(g \colon \Lambda \times \Gamma \to \bR\) be a bounded function. If \(g\) is Borel measurable (resp. upper semicontinuous, resp. continuous), then the same is true for
	\[
	\Lambda \times \mathcal{P} (\Gamma) \to \bR, \quad (\lambda, \mu) \mapsto \int g (\lambda, \gamma) \, \mu (d \gamma).
	\]
\end{lemma} 

The following lemma is a consequence of the equivalence (1) \(\Leftrightarrow\) (3) from \cite[Proposition~A.1]{LakSPA15}.
\begin{lemma} \label{appendix: wasserstein cont}
	Let \((\Lambda, m_\Lambda)\) and \((\Gamma, m_\Gamma)\) be complete separable metric spaces and \(p \in [1, \infty) \cup \{0\}\). If \(g \colon \Gamma \to \mathcal{P}^p (\Lambda)\) is a continuous map and there are \(\lambda_0 \in \Lambda, \gamma_0 \in \Gamma\) and \(C > 0\) such that
	\[
	\int m_\Lambda (\lambda, \lambda_0)^p \, g (\gamma) (d \lambda) \leq C (1 + m_\Gamma(\gamma, \gamma_0)^p), \quad \forall \, \gamma \in \Gamma, 
	\]
	then \(P \mapsto P \circ g^{-1}\) is continuous from \(\mathcal{P}^p (\Gamma)\) into \(\mathcal{P}^p (\mathcal{P}^p (\Lambda))\). 
\end{lemma} 

The following version of the law of large numbers is a restatement of \cite[Corollary~2.14]{LakLN}.

\begin{lemma} \label{appendix: law large numbers}
	Let \(\Lambda\) be a separable metrizable space and take \(\mu \in \mathcal{P}^p (\Lambda)\). Let \((X^n)_{n = 1}^\infty\) be a sequence of independent \(\Lambda\)-valued random variables all with law \(\mu\) and set \(\mu^n := \frac{1}{n} \sum_{k = 1}^n \delta_{X^k}\). Then, \[\w^\Lambda_p (\mu^n, \mu) \to 0, \quad n \to \infty, \] a.s. and in \(L^p\).
\end{lemma} 

The following elementary fact appears to be well-known (\cite[p. 1657]{LakSIAM17}) but we did not find a reference, so we provide a proof.
\begin{lemma} \label{lem: gen Fatou W}
	Let \((\Lambda, m_\Lambda)\) be a complete separable metric space and take \(p \geq 1\). Assume that \(g \colon \Lambda \to \bR\) is an upper semicontinuous function such that 
	\[
	\exists\, \lambda_0 \in \Lambda \colon \ c:= \sup \Big\{ \frac{| g (\lambda) |}{1 + m_\Lambda (\lambda, \lambda_0)^p} \colon \lambda \in \Lambda \Big\} < \infty.
	\]
	Let \((\mu^n)_{n = 0}^\infty \subset \fP^p (\Lambda)\) be a sequence such that \(\mu^n \to \mu^0\) in \(\fP^p (\Lambda)\). Then, 
	\[
	\limsup_{n \to \infty} E^{\mu^n} [ g ] \leq E^{\mu^0} [ g ].
	\]
\end{lemma}
\begin{proof}
	By Skorokhod's coupling theorem, on some probability space (whose expectation we denote by \(E\)), there are random variables \(X^0, X^1, \dots\) with laws \(\mu^0, \mu^1, \dots\) such that a.s. \(X^n \to X^0\). Notice that 
	\[
	Y^n := c \big[ 1 + m_\Lambda(X^n, \lambda_0)^p \big] - g (X^n) \geq 0. 
	\]
	Hence, Fatou's lemma yields that 
	\begin{align*}
		E \Big[ \liminf_{n \to \infty} Y^n \Big] \leq \liminf_{n \to \infty} E [ Y^n ].
	\end{align*}
	As \(g\) is upper semicontinuous, we have a.s.
	\[
	\liminf_{n \to \infty} Y^n = c \big[ 1 + m_\Lambda(X^0, \lambda_0)^p \big] - \limsup_{n \to \infty} g (X^n) \geq\ c\big[ 1 +m_\Lambda (X^0, \lambda_0)^p \big] - g (X^0),
	\]
	which implies 
	\[
	cE\big[ 1 + m_\Lambda(X^0, \lambda_0)^p \big] - E [ g (X^0) ] \leq \liminf_{n \to \infty} E [ Y^n ].
	\]
	As \(\mu^n \to \mu^0\) in \(\fP^p (\Lambda)\), we get that
	\[
	\liminf_{n \to \infty} E [ Y^n ] = cE \big[ 1 + m_\Lambda (X^0, \lambda_0)^p \big] - \limsup_{n \to \infty} E [ g (X^n) ],
	\]
	and finally, 
	\[
	- E [ g (X^0) ] \leq - \limsup_{n \to \infty} E [ g (X^n) ]. \qedhere
	\]
\end{proof}

The following consequence of the Portmanteau theorem is a restatement of \cite[Corollary~3.3.2]{EK}. 

\begin{lemma} \label{appendix: subspace weak conv}
	Let \(\Lambda\) be a metrizable space and \(\Lambda' \in \mathcal{B} (\Lambda)\). For any sequence \((\mu^n)_{n = 0}^\infty \subset \mathcal{P} (\Lambda)\) with \(\mu^n (\Lambda') = 1\), we have \(\mu^n \to \mu^0\) in \(\mathcal{P}(\Lambda)\) if and only if \(\mu^n \to \mu^0\) in \(\mathcal{P} (\Lambda')\).
\end{lemma} 

Finally, the next lemma is a consequence of \cite[Theorem~2.1, Examples~2.1~(a)]{winkler}.

\begin{lemma} \label{appendix: extreme points}
	Let \(\Lambda\) be a Lusin space, let \(g \colon \Lambda \to \bR\) be a Borel map and take \(c \in \bR\). Then,
	\[
	H := \Big\{
	\mu \in \mathcal{P} (\Lambda) \colon \int |g| \, d\mu < \infty, \ \int g \, d\mu \leq c\, \Big\}
	\]
	is a convex set and its extreme points \(\on{ex}\, (H)\) satisfy
	\[
	\on{ex}\, (H) \subset \Big\{ \mu \in \mathcal{P} (\Lambda) \colon \mu = \alpha \delta_{\lambda_1} + (1 - \alpha) \delta_{\lambda_2}, \, \alpha \in [0, 1], \, \lambda_1, \lambda_2 \in \Lambda \Big\}.
	\]
\end{lemma} 

\subsection{Weak-strong convergence on product spaces} \label{sec: app ws conv}
Let \(\Lambda\) and \(\Sigma\) be Polish spaces and define \(\mathcal{P}_{ws} (\Lambda \times \Sigma)\) to be the space of probability measures on \((\Lambda \times \Sigma, \mathcal{B} (\Lambda \times \Sigma))\) endowed with the weakest topology under which \(\mu \mapsto \int g \, d\mu\) is continuous for every bounded Borel function \(g \colon \Lambda \times \Sigma \to \bR\) that is continuous in the \(\Sigma\)-variable. If \(\mu^n \to \mu^0\) in \(\mathcal{P}_{ws}(\Lambda \times \Sigma)\), we say that \((\mu^n)_{n = 1}^\infty\) converges in the weak-strong sense to \(\mu^0\). 

We write \(\mathcal{P}_s (\Lambda)\) for the space \(\mathcal{P} (\Lambda)\) endowed with the topology of set-wise convergence, i.e., the weakest topology under which \(\mu \to \int g \, d \mu\) is continuous for every bounded Borel function \(g \colon \Lambda \to \bR\). 
The following is a restatement of \cite[Corollary~2.9]{SPS_1981__15__529_0}.

\begin{lemma} \label{appendix: convergence in ws} 
	Take \((\mu_n)_{n = 0}^\infty \subset \mathcal{P}_{ws} (\Lambda \times \Sigma)\). Then, \(\mu^n \to \mu^0\) in \(\mathcal{P}_{ws} (\Lambda \times \Sigma)\) if and only if \(\mu^n \to \mu^0\) in \(\mathcal{P} (\Lambda \times \Sigma)\) and \(\{ \mu^n (\, \cdot \times \Sigma)  \colon n \in \mathbb{N}\}\) is relatively compact in \(\mathcal{P}_s (\Lambda)\). 
\end{lemma} 

The following is a restatement of \cite[Theorem~2.5]{CPS23}.
\begin{lemma} \label{appendix: weak strong relative comp}
	Let \(\Pi \subset \mathcal{P}_{ws} (\Lambda \times \Sigma)\). The following are equivalent: 
	\begin{enumerate}
		\item[\textup{(a)}] 
		\(\Pi\) is relatively compact in \(\mathcal{P}_{ws} (\Lambda \times \Sigma)\).
		\item[\textup{(b)}] 
		\(\Pi_\Lambda:= \{ \mu (\, \cdot \times \Sigma) \colon \mu \in \Pi\}\) is relatively compact in \(\mathcal{P}_s (\Lambda)\) and \(\Pi_\Sigma := \{ \mu (\Lambda \times \cdot \,) \colon \mu \in \Pi\}\) is relatively compact in \(\mathcal{P}(\Sigma)\). 
		\item[\textup{(c)}] 
		\(\Pi\) is relatively sequentially compact in \(\mathcal{P}_{ws} (\Lambda \times \Sigma)\).
		\item[\textup{(d)}] 
		\(\Pi_\Lambda\) is relatively sequentially compact in \(\mathcal{P}_s (\Lambda)\) and \(\Pi_\Sigma\) is relatively sequentially compact in \(\mathcal{P}(\Sigma)\). 
	\end{enumerate}
\end{lemma} 

With \(\m\) as defined in Section~\ref{sec: notation}, Lemma~\ref{appendix: convergence in ws} shows that on the space \(\m\) the weak and the weak-strong topologies coincide:

\begin{lemma} \label{appendix: stable}
	Let \(g \colon [0, T] \times F \to \bR\) be a bounded Carath\'eodory function, i.e., measurable in the first and continuous in the second coordinate. Then, \(m \mapsto \int g \, dm\) is continuous from \(\m\) into~\(\bR\). 
\end{lemma}

\subsection{Relative compactness criteria}

We recall a relative compactness criterion for subsets of the Wasserstein space \(\mathcal{P}^p (\mathcal{P}^p (\Lambda))\), where \(\Lambda\) is a Polish space. In the spirit of \eqref{eq: I}, we set 
\[
I (P) (A) := \int \mu (A) \, P (d \mu), \quad A \in \mathcal{B} (\Lambda), \ P \in \mathcal{P}^p (\mathcal{P}^p (\Lambda)). 
\]
The following is a restatement of \cite[Corollary~B.2]{LakSPA15}.

\begin{lemma} \label{appendix: tight wasserstein} 
	Let \((\Lambda, m_\Lambda)\) be complete separable metric space and take \(p \in [1, \infty) \cup \{0\}\). Let \(\mathcal{R} \subset \mathcal{P}^p (\mathcal{P}^p(\Lambda))\) be such that \(I (\mathcal{R})\) is relatively compact in \(\mathcal{P}(\Lambda)\) and 
	\[
	\sup_{P \in \mathcal{R}} \int m_\Lambda(\lambda, \lambda_0)^\ell \, I (P) (d \lambda) < \infty, \quad \text{ for some } \lambda_0 \in \Lambda \text{ and }  \ell > p, 
	\]
	then \(\mathcal{R}\) is relatively compact in \(\mathcal{P}^p (\mathcal{P}^p (\Lambda))\). 
\end{lemma} 

For the next result, let \(\H, \X, \V, \N\) and \(\Omega\) be as in Section~\ref{sec: notation}.
The following is an adaptation of \cite[Lemma~4.3]{GRZ} to our setting. 

\begin{lemma} \label{appendix: tight GRZ}
	Let \(\mathcal{R}\subset \mathcal{P} (C ([0, T]; \V))\). Assume that \(\V^*\) is compactly embedded into \(\H\) and that, for some \(\ell > 0\),
	\[
	\sup_{P \in \mathcal{R}}E^P \Big[ \sup_{s \in [0, T]} \|X_s\|_\H + \sup_{s \not = t} \frac{\|X_t - X_s\|_\V}{|t - s|^\ell} + \int_0^T \N (X_s) \, ds \Big] < \infty. 
	\]
	Then, \(\mathcal{R}\) is relatively compact in \(\mathcal{P} (\Omega)\). 
\end{lemma} 

\subsection{The Besov--H{\"o}lder embedding}

A useful estimate in the context of Lemma~\ref{appendix: tight GRZ} is the Besov--H{\"o}lder embedding, see \cite[Corollary~A.2]{FV}. 

\begin{lemma} \label{appendix: besov}
	Take a complete metric space \((\Lambda, m_\Lambda)\) and let \(q > 1, \alpha \in (1 / q, 1)\) and \(\omega \in C ([0, T]; \Lambda)\). Then, there exists a constant \(C = C (q, \alpha)\) such that 
	\begin{align*}
		\sup_{s \neq t} \frac{m_\Lambda (\omega (s), \omega(t))^q}{|s - t|^{q\alpha - 1}} \leq C \int_0^T \int_0^T \frac{m_\Lambda (\omega (s), \omega(t))^q}{|s - t|^{1 + q \alpha}} \, ds\, dt.
	\end{align*}
\end{lemma} 

\subsection{The Arzel\`a--Ascoli theorem}
Let \((\Lambda, m_\Lambda)\) and \((\Sigma, m_\Sigma)\) be two complete separable metric spaces, \(\Lambda\) being locally compact. For \(K \subset \Lambda\), a function \(g \in C (\Lambda; \Sigma)\) and \(h > 0\), we set 
\[
w_K (g, h) := \sup \Big\{ m_\Sigma (g (s), g (t)) \colon s, t \in K, \, m_\Lambda(s, t) \leq h \Big\}. 
\] 
The following version of the Arzel\`a--Ascoli theorem is a restatement of \cite[Theorem~A.5.2]{Kal21}.
\begin{lemma} \label{appendix: arzela}
	Let \(A \subset C (\Lambda; \Sigma)\), where latter is endowed with the local uniform topology, and let \(\Lambda'\) be a dense subset of \(\Lambda\). Then, \(A\) is relatively compact in \(C (\Lambda; \Sigma)\) if and only if the following two properties hold:
	\begin{enumerate}
		\item[\textup{(i)}] \(\{ g (t) \colon g \in A \}\) is relatively compact in \(\Sigma\) for every \(t \in \Lambda'\).
		\item[\textup{(ii)}] \(\lim_{h \to 0} \sup_{g \in A} w_K (g, h) = 0\) for every compact set \(K \subset \Lambda\). 
	\end{enumerate}
\end{lemma}

\subsection{Compact and continuous convergence}

The following lemma can be proved similar to \cite[Theorem on pp. 98--99]{remmert}.

\begin{lemma} \label{appendix: remmert} 
Let \(\Lambda\) and \(\Sigma\) be metrizable spaces and let \((f^n)_{n = 1}^\infty\) be a sequence of functions \(\Lambda \to \Sigma\). The following are equivalent:
\begin{enumerate} 
	\item[\textup{(a)}] \(f^n\) converges uniformly on compact subsets of \(\Lambda\) to a function \(f \in C (\Lambda; \Sigma)\).
	\item[\textup{(b)}] 
	For every sequence \((\lambda^n)_{n = 1}^\infty \subset \Lambda\), the limit \(\lim_{n \to \infty} f^n (\lambda^n)\) exists in \(\Sigma\). 
\end{enumerate} 
\end{lemma} 

\subsection{Berge's maximum theorem}

We recall a useful elementary version of Berge's maximum theorem, see \cite[Propositions~1.3.1, 1.3.3]{hu_papa}.

\begin{lemma} \label{appendix: berge}
	Let \(\Lambda\) and \(\Gamma\) be Hausdorff spaces and suppose that \(\Gamma\) is compact. If \(g \colon \Lambda \times \Gamma \to [- \infty, \infty]\) is lower (resp., upper) semicontinuous, then 
	\(
	\lambda \mapsto \sup_{\gamma \in \Gamma} g (\lambda, \gamma)
	\) 
	is also lower (resp., upper) semicontinuous.
\end{lemma}

\subsection{Semimartingale theory}

The following lemma is a special case of \cite[Theorem~II.2.42]{JS} for continuous processes. 
\begin{lemma} \label{appendix: JS}
	Let \(Y = (Y_t)_{t \in [0, T]}\) be a real-valued continuous process, let \(B = (B_t)_{t \in [0, T]}\) be a real-valued continuous process of finite variation with \(B_0 = 0\) and let \(C = (C_t)_{t \in [0, T]}\) be an increasing real-valued continuous process with \(C_0 = 0\). The following are equivalent:
	\begin{enumerate}
		\item[\textup{(a)}] \(Y - Y_0 - B\) is a continuous local martingale with quadratic variation process \(C\). 
		\item[\textup{(b)}] For all \(g \in C^2_b (\bR; \bR)\), the process 
		\[
		g (Y_t) - g (Y_0) - \int_0^t g' (Y_s) \, d B_s - \frac{1}{2} \int_0^t g'' (Y_s) \, d C_s, \quad t \in [0, T], 		
		\]
		is a continuous local martingale. 
	\end{enumerate}
\end{lemma}

The following is a representation result for cylindrical continuous local martingales, which follows from \cite[Remark~3, Corollary~6]{OndRep}.

\begin{lemma}  \label{appendix: O Rep} 
	Let \(\H\) and \(\U\) be two separable Hilbert spaces, let \((\Omega, \cF, (\cF_t)_{t \in [0, T]}, P)\) be a filtered probability space, let \(g \colon [0, T] \times \Omega \to L (\U; \H)\) be progressively measurable, i.e., \((t, \omega) \mapsto g (t, \omega) u\) is progressively measurable for every \(u \in \U\), and let \(D \subset \H\) be a group for the binary operation \(+\) that separates points. 
	Suppose that a.s. 
	\[
	\int_0^T \|g_s\|^2_{L_2 (\U; \H)} \, ds < \infty,
	\]
	and that \(M = \{M (h) \colon h\in D \}\) is a family of real-valued continuous local martingales with starting value \(M_0 (h) = 0\) and quadratic variation
	\[
	\langle M (h), M (h) \rangle_t = \int_0^t \|g^*_s h \|_\U^2 \, ds, \quad t \in [0, T], \ h \in D.
	\]
	Then, possibly on a standard extension of \((\Omega, \cF, (\cF_t)_{t \in [0, T]}, P)\) that supports an infinite number of independent standard Brownian motions that are, all together, independent of \(M\), there exists a cylindrical Brownian motion \(W\) over \(\U\) such that 
	\[
	M_t (h) = \Big \langle \int_0^t g_s \, d W_s, h \Big \rangle_\H, \quad t \in [0, T], \ h \in D. 
	\]
\end{lemma} 

\subsection{Stochastic differential equations with random coefficients}
In this section we recall a weak existence result for stochastic differential equations with random coefficients that was established in \cite{jacod1981weak}. For our purpose, it suffices to stick with path-continuous solutions, while \cite{jacod1981weak} also covers processes with jumps. 

Let us start by introducing the probabilistic setup. Fix two numbers \(r, d \in \mathbb{N}\), a finite time horizon \(T > 0\) and take a filtered probability space \((\Omega, \cF, (\cF_t)_{t \in [0, T]}, P)\) with right-continuous filtration that supports an \(r\)-dimensional standard Brownian motion \(W = (W_t)_{t \in [0, T]}\). Let \(\mathcal{X} = C ([0, T]; \bR^d)\) be the space of continuous functions \([0, T]\to \bR^d\) endowed with the uniform topology. The coordinate process on \(\mathcal{X}\) is denoted by \(X = (X_t)_{t \in [0, T]}\), i.e., \(X_t (x) = x(t)\) for \(x \in \mathcal{X}\) and \(t \in [0, T]\). Then, define the product space
\[
\overline{\Omega} := \Omega \times \mathcal{X}, \quad \overline{\cF} := \cF \otimes \mathcal{B} (\mathcal{X}), \quad \overline{\cF}_t := \bigcap_{s > t} \, ( \cF_s \otimes \sigma (X_r, r \in [0, s])). 
\] 
As coefficients, take \((\overline{\cF}_t)_{t \in [0, T]}\)-predictable functions \(b \colon[0, T] \times \overline{\Omega} \to \bR^d\) and \(g \colon[0, T] \times \overline{\Omega} \to \bR^{d \times r}\). 
The following existence theorem follows from \cite[Theorem~1.8, Corollary~2.20]{jacod1981weak}.
\begin{lemma} \label{appendix: existence SDE} 
	Assume that \(x \mapsto b (t, \omega, x)\) and \(x \mapsto g (t, \omega, x)\) are continuous on \(\mathcal{X}\) for all \(t \in [0, T]\) and \(\omega \in \Omega\). 
	Further, assume that there exists a constant \(C > 0\) such that 
	\[
	\|b (t, \omega, x)\|_{\bR^d} + \| g (t, \omega, x)\|_{\bR^{d \times r}} \leq C \Big( 1 + \sup_{s \in [0, t]} \|x (s)\|_{\bR^d} \Big)
	\]
	for all \((t, \omega, x) \in [0, T] \times \overline{\Omega}\). 
Then, there exists a probability measure \(\overline{P}\) on \((\overline{\Omega}, \overline{\cF}, (\overline{\cF}_t)_{t \in [0, T]})\) such that \(W\) is an \(r\)-dimensional \(\overline{P}\)-Brownian motion and \(\overline{P}\)-a.s. 
\[
d X_t = b (t, X) \, dt + g (t, X) \, d W_t.
\]
Furthermore, \(\overline{P}\) admits a factorization 
\begin{align} \label{eq: decomposition jacod memin} 
\overline{P} (d \omega, d x) = \overline{Q} (\omega, dx) \, P (d \omega), 
\end{align} 
where \(\overline{Q}\) is a transition kernel from \((\Omega, \cF)\) into \((\mathcal{X}, \mathcal{B} (\mathcal{X}))\) such that \(\overline{Q} (\, \cdot \,, G)\) is \(P\)-a.s. \(\cF_t\)-measurable for all \(G \in \mathcal{X}_t\) and \(t \in [0, T]\). 
\end{lemma}

\begin{remark} \label{appendix: rem factorization} 
	\begin{enumerate}
\item[\textup{(a)}] 	The fact that \(W\) is a \(\overline{P}\)-Brownian motion can be deduced from the factorization \eqref{eq: decomposition jacod memin}, where the measurability properties below \eqref{eq: decomposition jacod memin} are also assumed. Indeed, by \cite[Lemma~2.17]{jacod1981weak}, in this case any martingale on \((\Omega, \cF, (\cF_t)_{t \in [0, T]}, P)\) is also a martingale on \((\overline{\Omega}, \overline{\cF}, (\overline{\cF}_t)_{t \in [0, T]}, \overline{P})\). 
\item[\textup{(b)}] It was shown in the proof of \cite[Theorem~2.5]{jacod1981weak} that the factorization \eqref{eq: decomposition jacod memin} is stable under weak-strong convergence. More specifically, for a sequence \((P^n)_{n = 0}^\infty \subset \mathcal{P} (\overline{\Omega})\) such that \(P^n \to P^0\) in the weak-strong sense, if each \(P^n\) admits a factorization of the type \eqref{eq: decomposition jacod memin}, where the kernel satisfy the properties described below \eqref{eq: decomposition jacod memin}, the same is true for the weak-strong limit \(P^0\). 
\end{enumerate} 
\end{remark}
We also provide a useful stability property on the space \(\overline{\Omega}\). The following result is a consequence of \cite[Theorem~3.20]{CPS23}. 

\begin{lemma} \label{appendix: convergence martingale}
	Let \((P^n)_{n = 0}^\infty \subset \mathcal{P} (\overline{\Omega})\) be such that \(P^n \to P^0\) in the weak-strong sense. Let \(M^0  = (M_t)_{t \in [0, T]}\) be a real-valued \cadlag \((\overline{\cF}_t)_{t \in [0, T]}\)-adapted process on \(\overline{\Omega}\) such that \(x \mapsto M^0_t (\omega, x)\) is continuous for every \((t, \omega) \in [0, T]\times \Omega\). Suppose that there are \cadlag martingales \((M^n_t)_{t \in [0, T]}\) on \((\overline{\Omega}, \overline{\cF}, (\overline{\cF}_t)_{t \in [0, T]}, P^n)\) such that \(\{ M^n_t \colon t \in [0, T], n \in \mathbb{N}\}\) is uniformly integrable and 
	\[
	P^n (| M^n_t - M^0_t | \geq \varepsilon) \to 0, \quad \forall \, \varepsilon > 0, \, t \in [0, T].
	\]
	Then, \(M^0\) is a martingale on  \((\overline{\Omega}, \overline{\cF}, (\overline{\cF}_t)_{t \in [0, T]}, P^0)\).
	
\end{lemma}

\subsection{The Krylov--Rozovskii It\^o formula}

Let \(\Y\) be a separable reflexive Banach space, let \(\X\) and \(\U\) be separable Hilbert spaces and consider the Gelfand triplet
\[
 \Y \subset \X \cong\, \X^* \subset \Y^*.
\]
The next lemma is due to Krylov--Rozovskii \cite[Theorem~I.3.1]{krylov_rozovskii}; see \cite[Theorem~4.2.5]{liu_rockner} for a formulation close to the one below.

\begin{lemma} \label{appendix: Krylov}
Take \(p > 1\) and a filtered probability space that supports a cylindrical standard Brownian motion \(W\) over \(\U\), a \(\Y^*\)-valued progressively measurable process \(Y = (Y_t)_{t \in [0, T]}\) such that a.s. 
\[
\int_0^T \|Y_s\|^{p / (p - 1)}_{\Y^*} \, ds < \infty, 
\]
and a progressively measurable \(L_2 (\U; \X)\)-valued process \(Z = (Z_t)_{t \in [0, T]}\) such that a.s. 
\[
\int_0^T \|Z_s\|_{L_2 (\U; \X)}^2 \, ds < \infty. 
\]
For an initial value \(v_0 \in \X\), define the \(\Y^*\)-valued continuous process \(V = (V_t)_{t \in [0, T]}\) by 
\[
V_t := v_0 + \int_0^t Y_s \, ds + \int_0^t Z_s \, d W_s, \quad t \in [0, T], 
\]
and assume that a.s. 
\[
\int_0^T \|V_s\|^p_\Y \, ds < \infty. 
\]
Then, a.a. paths of \(V\) are in \(C ([0, T]; \X)\) and a.s. 
\[
\| V_t\|_\X^2 = \|v_0\|_\X^2 + \int_0^t \Big(2\, _{\Y}\langle V_s, Y_s \rangle_{\Y^*} + \|Z_s\|^2_{L_2 (\U; \X)} \Big) \, ds + 2 \int_0^t \langle V_s, Z_s \, d W_s \rangle_\X, \quad t \in [0, T]. 
\]
\end{lemma}

\bibliographystyle{abbrv}
\bibliography{references}
\end{document}